\newtheorem{thm}{Theorem}[section]
\newtheorem{cor}[thm]{Corollary}
\newtheorem{lem}[thm]{Lemma}
\newtheorem{prop}[thm]{Proposition}
\newtheorem{conj}[thm]{Conjecture}
\theoremstyle{definition}
\newtheorem{defn}[thm]{Definition}
\theoremstyle{remark}
\newtheorem{rem}[thm]{Remark}
\title{Differential Equivariant $K$-Theory}
\author{Michael L. Ortiz}
\begin{document}

\maketitle

\begin{abstract}
Following Hopkins and Singer, we give a definition for the differential equivariant $K$-theory of a smooth manifold acted upon by a finite group. The ring structure for differential equivariant $K$-theory is developed explicitly. We also construct a pushforward map which parallels the topological pushforward in equivariant $K$-theory.  An analytic formula for the pushforward to the differential equivariant $K$-theory of a point is conjectured, and proved in the boundary case, in the case of a free action, and for ordinary differential $K$-theory in general. The latter proof is due to K. Klonoff.
\end{abstract}

\section{Introduction}

In differential geometry, one approaches the real cohomology of a smooth manifold by way of representative differential forms, thus getting a handle on global information by means of local data. In a similar way, a generalized differential cohomology theory gives a way of representing \emph{integral} generalized cohomology classes by differential geometric objects. The ordinary differential cohomology groups have appeared in the literature as the groups of Cheeger-Simons differential characters \cite{ChS} and the smooth Deligne cohomology groups \cite{De}, and the foundations for generalized differential cohomology were laid by Hopkins and Singer in \cite{HS}.

Of particular interest is differential $K$-theory. One motivation for studying differential $K$-theory arises in theoretical physics: in Type II superstring theory, a Ramond-Ramond field carries the global information of a $K$-theory class together with the locality of a field (cf. the papers \cite{FH} of Freed and Hopkins and \cite{MW} of Moore and Witten). An explicit definition for differential $K$-theory is sketched in \cite{HS}, and another sketch is given by Freed in \cite{F2}. In \cite{K}, Klonoff gives an explicit and detailed account of differential $K$-theory; after making a definition based on \cite{HS}, he develops a geometric description for the degree zero theory from a sketch in \cite{F2} that is more in the spirit of Lott's paper \cite{Lo} on $\mathbb{R}/\mathbb{Z}$ index theory. This geometric picture has also been developed by Simons and Sullivan in \cite{SS}, and an alternate definition for differential $K$-theory is given by Bunke and Schick in \cite{BS}.

In this paper, we formulate a definition of differential \emph{equivariant} $K$-theory for a finite group. Differential equivariant $K$-theory has also been defined by Szabo and Valentino in \cite{SV}; we take a different tack, but the two definitions are equivalent, as will be explained. Essential to our definition is the localization theorem of Atiyah and Segal (cf. \cite{ASe}). If $\Gamma$ is a finite group acting on a manifold $X$, then this theorem implies that $K_\Gamma^\bullet(X) \otimes \mathbb{R}$ is isomorphic to a ring $H^\bullet_\Gamma(X;\mathbb{R})$ of cohomology classes with real coefficients, the cohomology ring of a complex $(\Omega^\bullet_\Gamma(X),d_\Gamma)$ of differential forms on the $\Gamma$-fixed point sets. The image of an equivariant $K$-theory class under this isomorphism is its \emph{equivariant Chern character}. In Section 2, we specify a family of classifying spaces for the equivariant $K$-theory groups, together with differential form representatives for the universal equivariant Chern characters. This allows one to compare representatives of $K$-theory classes with representatives of de Rham cohomology classes on the level of differential forms, and thus to make the desired geometric refinement of equivariant $K$-theory.

The differential equivariant $K$-theory group $\check{K}_\Gamma^{-i}(X)$ fits into the commutative diagram
\begin{displaymath}
\xymatrix{
\check{K}^{-i}_\Gamma(X) \ar[r]^\omega \ar[d]^c & \Omega^{-i}_\Gamma(X)_\textrm{cl} \ar[d]\\
K_\Gamma^{-i}(X) \ar[r] & H^{-i}_\Gamma(X;\mathbb{R}) }
\end{displaymath}
where $\Omega^{-i}_\Gamma(X)_\textrm{cl} \subset \Omega^{-i}_\Gamma(X)$ is the subring of closed forms. If $\check{x} \in \check{K}^{-i}_\Gamma(X)$, then it is natural to refer to $c(\check{x})$ as its \emph{characteristic class} and to $\omega(\check{x})$ as its \emph{curvature}. Now, if $X$ is compact and spin, then there exists a natural pushforward map
\begin{displaymath}
\wp_\Gamma: K_\Gamma^0(X) \longrightarrow K_\Gamma^{-\textrm{dim}\,X}(\textrm{pt})
\end{displaymath}
A definition of pushforward from the ordinary differential $K$-theory of a Riemannian spin manifold $X$ to that of a point was sketched in \cite{HS} and constructed in \cite{K}, as well as in \cite{BS}, and in this paper we generalize it to the equivariant setting, thus defining
\begin{displaymath}
\wp_\Gamma: \check{K}_\Gamma^0(X) \longrightarrow \check{K}_\Gamma^{-\textrm{dim}\,X}(\textrm{pt})
\end{displaymath}
If $i$ is even, then
\begin{displaymath}
\check{K}^{-i}_\Gamma(\textrm{pt}) \cong R(\Gamma) \cong K^{-i}_\Gamma(\textrm{pt})
\end{displaymath}
where $R(\Gamma)$ is the representation ring for $\Gamma$. Thus, if $X$ is even-dimensional, then the pushforward of a class in $\check{K}^0_\Gamma(X)$ is given by the (topological) pushforward of its characteristic class. On the other hand, if $i$ is odd, then
\begin{displaymath}
\check{K}^{-i}_\Gamma(\textrm{pt}) \cong (R(\Gamma) \otimes \mathbb{R})/R(\Gamma)
\end{displaymath}
whereas $K_\Gamma^{-i}(\textrm{pt})$ is trivial. It follows that, if $X$ is odd-dimensional, then the pushforward takes values in the torus $(R(\Gamma) \otimes \mathbb{R})/R(\Gamma)$. Our pushforward map thus constitutes a \emph{geometric refinement} of the topological pushforward map. It should be able to detect secondary geometric invariants in the case that $X$ is odd-dimensional.

Now, by the Index Theorem (cf. \cite{AS1}, \cite{ASe2}), the ordinary $K$-theoretic pushforward of a class may be computed as the $\Gamma$-index of a twisted Dirac operator. In this paper, we conjecture that this theorem generalizes to differential equivariant $K$-theory in the following way. There is a natural way of associating to each class $\check{x} \in \check{K}^0_\Gamma(X)$ a complex equivariant vector bundle $V \rightarrow X$ with connection. If $X$ is odd-dimensional, then one defines the reduced eta invariant $\xi_\Gamma(D_V)$ of the twisted spinor Dirac operator $D_V$, which it is most natural to view as an element of $(R(\Gamma) \otimes \mathbb{R})/R(\Gamma)$. Our conjecture is that
\begin{displaymath}
\wp_\Gamma(\check{x}) = \xi_\Gamma(D_V)
\end{displaymath}
This has been proved in certain special cases. We have proved it in the case that $(X,V)$ is a boundary in an appropriate sense. Furthermore, Klonoff has given a reduction to the boundary case for $\Gamma$ trivial, thus proving the conjecture in ordinary differential $K$-theory, and we have used this to prove the conjecture in the case that $\Gamma$ acts freely.

Briefly, the contents of this paper are as follows. We begin in Section 2 by recalling some basic facts and fixing notation, classifying spaces, and representatives, and proceed to give the main definition. In Section 3, we present two useful alternative definitions of differential equivariant $K$-theory for compact spaces, and we develop the ring structure in Section 4. The pushforward map is constructed in Section 5, and the analytic formula for the pushforward to the differential equivariant $K$-theory of a point is conjectured in Section 6.  This conjecture is proved in the special cases referred to above.

I would like to thank my thesis adviser, Dan Freed, for his many hours of patient guidance, and I would also like to acknowledge K. Gomi and K. Klonoff for many helpful discussions.


\section{Differential Equivariant $K$-Theory}

Throughout this paper, we work with smooth manifolds, smooth maps, and smooth actions, unless otherwise stated.  Thus, the statement that $X$ is a $\Gamma$-manifold, where $\Gamma$ is some group, will be taken to mean that $X$ is a smooth manifold with a smooth $\Gamma$-action. $\Gamma$ is always taken to be a finite group.

\subsection{Preliminaries}

Recall the following localization theorem of Atiyah and Segal for equivariant $K$-theory (cf. \cite{ASe}):

\begin{thm} \label{thm:localization} Let $\Gamma$ be a finite group. Suppose $X$ is a $\Gamma$-manifold.  Then there is a natural isomorphism
\begin{displaymath}
K_\Gamma^\bullet(X) \otimes \mathbb{C} \stackrel{\sim}{\longrightarrow} \left( \bigoplus_{g \in \Gamma} K^\bullet(X^g) \otimes \mathbb{C} \right)^\Gamma
\end{displaymath}
where $X^g \subset X$ is the space fixed by the action of $g \in \Gamma$. \end{thm}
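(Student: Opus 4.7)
The plan is to prove this by the standard Atiyah--Segal localization machinery, exploiting the algebraic simplicity of $R(\Gamma) \otimes \mathbb{C}$ when $\Gamma$ is finite. First I would observe that, by character theory, $R(\Gamma) \otimes \mathbb{C}$ is the ring of complex class functions on $\Gamma$ and hence splits as $\prod_{[g]} \mathbb{C}$ indexed by conjugacy classes; the $[g]$-factor is evaluation at $g$, equivalently the quotient by the maximal ideal $\mathfrak{p}_g$ of virtual representations whose character vanishes at $g$. Applying this splitting to the $R(\Gamma)\otimes\mathbb{C}$-module $K_\Gamma^\bullet(X) \otimes \mathbb{C}$ produces a direct-sum decomposition indexed by conjugacy classes, so the theorem will follow once I identify the $\mathfrak{p}_g$-piece of $K_\Gamma^\bullet(X) \otimes \mathbb{C}$ with $K^\bullet(X^g) \otimes \mathbb{C}$, in a way that is equivariant for the conjugation action of $\Gamma$ on the indexing set.

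The main obstacle is the concentration theorem, which asserts that the restriction $K_\Gamma^\bullet(X) \to K_\Gamma^\bullet(X^g)$ becomes an isomorphism after localizing at $\mathfrak{p}_g$. I would argue in two moves. First, restrict the action to the cyclic subgroup $\langle g \rangle$; the ideal $\mathfrak{p}_g$ pulls back to the analogous ideal in $R(\langle g \rangle)$, so it suffices to treat the cyclic case. Second, for $\langle g \rangle$ acting on a compact manifold $Y$ with $Y^g = \emptyset$, a Mayer--Vietoris argument on an equivariant cell decomposition reduces the vanishing of $K_{\langle g \rangle}^\bullet(Y) \otimes \mathbb{C}$ at $\mathfrak{p}_g$ to the model case of a single orbit $\langle g \rangle / H$ with $g \notin H$, where $K_{\langle g \rangle}^\bullet(\langle g \rangle / H) \cong R(H)$ is visibly annihilated by pulling back $\mathfrak{p}_g$ to $R(H)$. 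Applied to a $\Gamma$-invariant tubular neighborhood of $X^g$ inside $X$ and to its complement, this gives the concentration isomorphism globally.

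Finally I would identify the local pieces. On $X^g$ the subgroup $\langle g \rangle$ acts trivially, so $K_{\langle g \rangle}^\bullet(X^g) \cong K^\bullet(X^g) \otimes R(\langle g \rangle)$, and localization of the second factor at $\mathfrak{p}_g$ collapses it to $\mathbb{C}$; induction from $\langle g \rangle$ back up to $\Gamma$ and the double-coset formula then match the pieces on the two sides. Summing over $g \in \Gamma$, the diffeomorphism $h : X^g \xrightarrow{\sim} X^{hgh^{-1}}$ intertwines the $\mathfrak{p}_g$- and $\mathfrak{p}_{hgh^{-1}}$-pieces, so taking $\Gamma$-invariants of $\bigoplus_{g\in\Gamma} K^\bullet(X^g)\otimes\mathbb{C}$ selects exactly one copy per conjugacy class with the appropriate centralizer invariance, reproducing the conjugacy-class decomposition of $K_\Gamma^\bullet(X) \otimes \mathbb{C}$ from the first paragraph. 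Naturality in $X$ is immediate since each step of the argument is functorial for equivariant maps.
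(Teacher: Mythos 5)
The paper itself offers no proof of this theorem: it is recalled from Atiyah--Segal \cite{ASe}, and the paper only supplies the explicit formula for the isomorphism, $\lbrack V\rbrack \mapsto \sum_g\sum_\lambda \lambda\lbrack V_{g,\lambda}\rbrack$, which it needs later to define the equivariant Chern character. Your sketch is the standard localization strategy behind that cited result (split $R(\Gamma)\otimes\mathbb{C}\cong\prod_{\lbrack g\rbrack}\mathbb{C}$, concentrate each factor near a fixed-point set, identify the local piece), so you are reconstructing the reference's argument rather than diverging from the paper. However, two steps in your sketch are not yet proofs.

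First, your concentration statement does not typecheck as written: $X^g$ is invariant only under the centralizer $Z_g$ (indeed $h\cdot X^g=X^{hgh^{-1}}$), so neither $K_\Gamma^\bullet(X^g)$ nor a ``$\Gamma$-invariant tubular neighborhood of $X^g$'' makes sense unless $g$ is central. Concentration at $\mathfrak{p}_g$ is onto the $\Gamma$-invariant set $X^{(g)}=\bigcup_{h\in\Gamma}X^{hgh^{-1}}$, and the identification of the localized module $K_\Gamma^\bullet(X^{(g)})\otimes\mathbb{C}$ at $\mathfrak{p}_g$ with $\left(K^\bullet(X^g)\otimes\mathbb{C}\right)^{Z_g}$ is exactly where the centralizer invariance in the target comes from; note that $X^{(g)}$ is in general \emph{not} the induced space $\Gamma\times_{Z_g}X^g$, because a point can be fixed by several distinct conjugates of $g$, so the step you compress into ``induction and the double-coset formula'' is the substantive part of the theorem and needs an actual argument. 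Second, your reduction to the cyclic case is asserted but not justified: you must explain why vanishing of the localized $K_{\langle g\rangle}$-theory forces vanishing of the localized $K_\Gamma$-theory. The standard route is that $\mathrm{ind}_{\langle g\rangle}^{\Gamma}\circ\mathrm{res}$ is multiplication by the class of the permutation representation $\mathbb{C}\lbrack\Gamma/\langle g\rangle\rbrack$, whose character at $g$ counts the $g$-fixed cosets and is therefore nonzero, hence invertible after localizing at $\mathfrak{p}_g$, so restriction is split injective there; alternatively you can skip the reduction and run the orbit/Mayer--Vietoris argument directly for $\Gamma$, since for an orbit $\Gamma/H$ with no conjugate of $g$ in $H$ the module $K_\Gamma^\bullet(\Gamma/H)\otimes\mathbb{C}\cong R(H)\otimes\mathbb{C}$ is supported away from $\mathfrak{p}_g$. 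Finally, since the paper needs this particular isomorphism, you should define the comparison map globally by the eigenbundle formula above and prove that \emph{it} becomes an isomorphism after localizing at each $\mathfrak{p}_g$, rather than assembling an abstract isomorphism piecewise and leaving its agreement with the paper's map unchecked.
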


\noindent This isomorphism is defined in the following way. Each class in $K_\Gamma^0(X)$ may be represented by complex equivariant vector bundle $V \rightarrow X$ with a $\mathbb{Z}/2\mathbb{Z}$-grading preserved by the $\Gamma$-action. Fix $g \in \Gamma$ for the time being, and let $V_g \rightarrow X^g$ be the restriction of $V$ to $X^g$. The group element $g$ acts on the fibers of $V_g$, so $V_g$ decomposes into a direct sum of bundles $V_{g,\lambda} \rightarrow X^g$ of $g$-eigenspaces with $g$-eigenvalue $\lambda$.  Notice that $V_{g,\lambda}$ is an equivariant vector bundle under action by the centralizer of $g$.  The isomorphism of Theorem~\ref{thm:localization} is given by
\begin{displaymath}
\lbrack V \rbrack \mapsto \sum_{g \in \Gamma} \sum_{\lambda} \lambda\lbrack V_{g,\lambda} \rbrack
\end{displaymath}
This defines the isomorphism for degree $0$; the isomorphism for degree $-1$ is constructed by applying similar arguments to $S^1 \times X$. This suffices to establish the isomorphism for all degrees, as we know by Bott periodicity that
\begin{displaymath}
K_\Gamma^{-i}(X) \cong K_\Gamma^{-i-2}(X) \qquad \textrm{and} \qquad K^{-i}(X^g) \cong K^{-i-2}(X^g)
\end{displaymath}

Let
\begin{displaymath}
\pi K_\mathbb{C} \equiv K^\bullet(\textrm{pt}) \otimes \mathbb{C} = \mathbb{C}\lbrack\lbrack u,u^{-1} \rbrack\rbrack
\end{displaymath}
where $u$ is the Bott class of degree two, and define
\begin{equation}
H^{-i}_\Gamma(X;\mathbb{C}) \equiv \left( \bigoplus_{g \in \Gamma} H(X^g;\pi K_\mathbb{C})^{-i} \right)^\Gamma
\end{equation}
for all $i \geq 0$. (Warning: this is non-standard notation.) Here, $-i$ denotes the \emph{total} degree.  For instance, a class $c\,u^{-1}$, where $c$ is a collection of complex cohomology classes $c_g$ of degree two on the fixed-point sets $X^g$, is of degree zero.

To a $\mathbb{Z}/2\mathbb{Z}$-graded equivariant vector bundle $V \rightarrow X$ we associate its equivariant Chern character $\textrm{ch}_\Gamma^0(V) \in H^0_\Gamma(X;\mathbb{C})$, defined as follows. Fix $g \in \Gamma$ arbitrary, and decompose $V_g$ into a direct sum of $g$-eigenbundles $V_\lambda$ as before.  Then the equivariant Chern character is defined as
\begin{equation}
\textrm{ch}_g^0(V) \equiv \sum \lambda\,\textrm{ch}(V_\lambda) \in H(X^g;\pi K_\mathbb{C})^0
\end{equation}
\begin{equation}
\textrm{ch}_\Gamma^0(V) \equiv \sum\textrm{ch}_g^0(V) \in H^0_\Gamma(X;\mathbb{C})
\end{equation}
where $\textrm{ch}$ denotes the ordinary Chern character. Like the ordinary Chern character, the equivariant Chern character respects direct sums and tensor products, hence descends to a ring homomorphism on $K_\Gamma^0(X)$.  The Chern characters of nonzero degree are defined similarly.  In particular, if a class $x \in K_\Gamma^{-i}(X)$ is represented by an equivariant vector bundle $V \rightarrow S^i \times X$, then its equivariant Chern character is defined as
\begin{equation}
\textrm{ch}_g^{-i}(x) \equiv \int_{S^i \times X^g/X^g} \textrm{ch}_g^0(V) \in H(X^g;\pi K_\mathbb{C})^{-i}
\end{equation}
\begin{equation}
\textrm{ch}_\Gamma^{-i}(x) \equiv \sum \textrm{ch}_g^{-i}(x) \in H^{-i}_\Gamma(X;\mathbb{C})
\end{equation}
The equivariant Chern character extends linearly to a map
\begin{displaymath}
\textrm{ch}_\Gamma^{-i}: K_\Gamma^{-i}(X) \otimes \mathbb{C} \stackrel{\sim}{\longrightarrow} H^{-i}_\Gamma(X;\mathbb{C}) \qquad i \in \mathbb{N}
\end{displaymath}
which is an isomorphism by Theorem~\ref{thm:localization}.

The complex algebra $K_\Gamma^\bullet(X) \otimes \mathbb{C}$ has a canonical real structure given by conjugation on $\mathbb{C}$, and this induces a real structure on $H^\bullet_\Gamma(X;\mathbb{C})$.  This real structure may be seen explicitly as follows.  Notice first that the fixed-point set for $g$-action coincides with that for $g^{-1}$-action.  The ring $H^\bullet_\Gamma(X;\mathbb{C})$ is generated over $\mathbb{C}$ by the equivariant Chern characters, and the $g$-component of the Chern character of an equivariant $K$-theory class is the complex conjugate of the $g^{-1}$-component.  It follows that the induced real structure is given as $c \mapsto \bar{c}$, where $(\bar{c})_g = \overline{c_{g^{-1}}}$.  We denote the real subspace of $H^\bullet_\Gamma(X;\mathbb{C})$ thus determined by $H^\bullet_\Gamma(X;\mathbb{R})$.  Let
\begin{displaymath}
\Omega^\bullet_\Gamma(X) \subset \left( \bigoplus_{g \in \Gamma}  \Omega(X^g;\pi K_\mathbb{C})^\bullet \right)^\Gamma
\end{displaymath}
be the ring of forms $\omega$ satisfying $\omega_{g^{-1}} = \overline{\omega_g}$.  Then $H^\bullet_\Gamma(X;\mathbb{R})$ is the cohomology of the differential complex $\left( \Omega^\bullet_\Gamma(X) , d_\Gamma \right)$, where
\begin{displaymath}
(d_\Gamma\omega)_g = d\omega_g
\end{displaymath}

We may obtain differential form representatives for the equivariant Chern character via the Chern-Weil method.  Suppose, for instance, that $V \rightarrow X$ is a complex graded equivariant vector bundle with $\Gamma$-invariant connection $\nabla$.  Let $\Omega$ denote the curvature of this connection, conceived of as a $2$-form taking values in $\textrm{End}(V)$, and let $\Omega_g$ denote its restriction to $X^g$.  Define
\begin{equation}
\omega_g(V;\nabla) \equiv \textrm{tr}_\textrm{s}\left(g \cdot \textrm{exp}\left\{\frac{i}{2\pi} \Omega_g \, u^{-1}\right\}\right) \in \Omega(X^g;\pi K_\mathbb{C})^0
\end{equation}
\begin{equation}
\omega(V;\nabla) \equiv \sum \omega_g(V,\nabla) \in \Omega_\Gamma^0(X;\mathbb{C})
\end{equation}
This form is a representative for $\textrm{ch}_\Gamma^0(V)$.  If $\Gamma$-action and $\nabla$ are \emph{compatibly unitary}---that is, if there exists a $\Gamma$-invariant hermitian metric on $V$ that is compatible with $\nabla$---then $\omega(V;\nabla)$ is real under the real structure just defined.  If $\nabla,\nabla'$ are two such connections on $V$, then we define the Chern-Simons form
\begin{displaymath}
\textrm{CS}_\Gamma(V;\nabla,\nabla') \in \Omega^{-1}_\Gamma(X)
\end{displaymath}
as follows.  Let $\nabla_t$ be the smooth path in the affine space of connections for $V$ defined by
\begin{displaymath}
\nabla_t = \nabla + t(\nabla' - \nabla)
\end{displaymath}
and let $\overline{\nabla}$ be the connection for $I \times V \rightarrow I \times X$ (where $I = \lbrack0,1\rbrack$) given by
\begin{displaymath}
\overline{\nabla} = \nabla_t + dt\,\partial_t
\end{displaymath}
Define
\begin{equation}
\textrm{CS}_g(V;\nabla,\nabla') \equiv \int_{I \times X^g/X^g} \omega_g\left(I \times V;\overline{\nabla}\right)
\end{equation}
\begin{equation}
\textrm{CS}_\Gamma(V;\nabla,\nabla') \equiv \sum \textrm{CS}_g(V;\nabla,\nabla')
\end{equation}
Then
\begin{displaymath}
d_\Gamma \textrm{CS}_\Gamma(V;\nabla,\nabla') = \omega(V;\nabla') - \omega(V;\nabla)
\end{displaymath}

We shall sometimes omit the superscript $0$ in our notation for the degree zero equivariant Chern characters, when it seems that no confusion will thus result.

Let $\mathcal{H} = \mathcal{H}^0 \oplus \mathcal{H}^1$ be a complex separable infinite-dimensional $\mathbb{Z}/2\mathbb{Z}$-graded Hilbert space, with $\mathcal{H}^0 = \mathcal{H}^1$ as ungraded Hilbert spaces. Let $W_\Gamma$ be a finite-dimensional $\Gamma$-representation in whose decomposition each irreducible representation appears once, and give it a $\Gamma$-invariant inner product. Set
\begin{displaymath}
\mathcal{H}_\Gamma = \mathcal{H} \otimes W_\Gamma
\end{displaymath}
Thus, $\mathcal{H}_\Gamma$ is a graded Hilbert space and a $\Gamma$-representation in which each irreducible $\Gamma$-representation appears infinitely many times.

Next, let $\mathbb{C}\textrm{l}_i$ be the complex Clifford algebra generated over $\mathbb{C}$ by $\lbrace e_1,\dots,e_i\rbrace$ obeying the relations
\begin{displaymath}
e_je_k+e_ke_j = -2\delta_{j,k}
\end{displaymath}
Then $\mathcal{H}_\Gamma \otimes \mathbb{C}\textrm{l}_i$ is a graded $\mathbb{C}\textrm{l}_i$-module.  (Here and elsewhere, the tensor product is the \emph{graded} tensor product.) Let
\begin{displaymath}
\mathfrak{F}_\Gamma^{-i} \subset \textrm{End}(\mathcal{H}_\Gamma \otimes \mathbb{C}\textrm{l}_i)
\end{displaymath}
be the space of odd skew-adjoint Fredholm operators which commute (in the graded sense) with $\mathbb{C}\textrm{l}_i$-action.  Then $\mathfrak{F}_\Gamma^{-i}$ is a smooth Banach $\Gamma$-manifold, with $\Gamma$-action given by conjugation.

Let $\Omega \mathfrak{F}_\Gamma^{-i}$ denote the space of smooth maps from $I = \lbrack0,1\rbrack$ to $\mathfrak{F}_\Gamma^{-i}$ which map the boundary $\lbrace0,1\rbrace$ into $\mathfrak{F}_\Gamma^{-i}\cap\textrm{GL}(\mathcal{H}_\Gamma)$.  The space $\Omega \mathfrak{F}_\Gamma^{-i}$ carries the compact-open topology.  We construct a map
\begin{displaymath}
\phi_{i+1}: \mathfrak{F}_\Gamma^{-i-1} \longrightarrow \Omega \mathfrak{F}_\Gamma^{-i}
\end{displaymath}
in the following way.  First, fix a graded isomorphism
\begin{displaymath}
J: \mathcal{H}_\Gamma \otimes \mathbb{C}\textrm{l}_1 \longrightarrow \mathcal{H}_\Gamma
\end{displaymath}
and define the isomorphism
\begin{displaymath}
K_i: \mathbb{C}\textrm{l}_1^{\otimes i} \longrightarrow \mathbb{C}\textrm{l}_i \qquad \textrm{by} \qquad e_1^{j_i} \otimes \cdots \otimes e_1^{j_1} \mapsto e_i^{j_i} \cdots e_1^{j_1}
\end{displaymath}
Let the $\mathbb{C}\textrm{l}_i$-linear isomorphism
\begin{displaymath}
(\textrm{id} \otimes K_i) \circ (J \otimes \textrm{id}^{\otimes i}) \circ (\textrm{id} \otimes K_{i+1}^{-1}): \mathcal{H}_\Gamma \otimes \mathbb{C}\textrm{l}_{i+1} \longrightarrow \mathcal{H}_\Gamma \otimes \mathbb{C}\textrm{l}_i
\end{displaymath}
also be denoted by $J$. Associate to $T \in \mathfrak{F}_\Gamma^{-i-1}$ the map $I \rightarrow \mathfrak{F}_\Gamma^{-i}$ defined by
\begin{displaymath}
t \mapsto J \circ \left( \cos(\pi t)\,\gamma_{i+1} + \sin(\pi t)\,T \right) \circ J^{-1}
\end{displaymath}
where $\gamma_{i+1}$ denotes Clifford multiplication by $e_{i+1} \in \mathbb{C}\textrm{l}_{i+1}$.  This defines $\phi_{i+1}$.  We have (cf. \cite{AS3}, \cite{FHT}):

\begin{thm}\label{thm:classifying} If $i$ is odd, then $\mathfrak{F}_\Gamma^{-i}$ has three components.  Two are contractible and the third we denote by $\boldsymbol{\mathfrak{F}}_\Gamma^{-i}$.  If $i$ is even, then set $\boldsymbol{\mathfrak{F}}_\Gamma^{-i} = \mathfrak{F}_\Gamma^{-i}$.  For each $i \in \mathbb{N}$, $\phi_{i+1}$ is a homotopy equivalence $\boldsymbol{\mathfrak{F}}_\Gamma^{-i-1} \rightarrow \Omega \mathfrak{F}_\Gamma^{-i}$, and
\begin{displaymath}
K^{-i}_\Gamma(X) \cong \lbrack X,\boldsymbol{\mathfrak{F}}_\Gamma^{-i} \rbrack_\Gamma
\end{displaymath}
where $\lbrack X,\boldsymbol{\mathfrak{F}}_\Gamma^{-i} \rbrack_\Gamma$ is the space of $\Gamma$-homotopy equivalence classes of $\Gamma$-equivariant maps $X \rightarrow \boldsymbol{\mathfrak{F}}_\Gamma^{-i}$.\end{thm}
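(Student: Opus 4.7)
The plan is to reduce the equivariant statement to the non-equivariant Atiyah-Singer theorem \cite{AS3} by systematically replacing ordinary Kuiper's theorem with its equivariant analogue. The key technical point is that because each irreducible $\Gamma$-representation appears with infinite multiplicity in $\mathcal{H}_\Gamma$, the Clifford module $\mathcal{H}_\Gamma \otimes \mathbb{C}\textrm{l}_i$ is a complete $\Gamma$-universe, so the group of $\mathbb{C}\textrm{l}_i$-linear $\Gamma$-equivariant unitaries is contractible in the strong operator topology. This is the principal input that upgrades the arguments of \cite{AS3} to the $\Gamma$-equivariant setting, following the template of \cite{FHT}.

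First I would verify the component count, which is only nontrivial for $i$ odd. For such $i$, conjugation by $\gamma_i$ turns any $T \in \mathfrak{F}_\Gamma^{-i}$ into a self-adjoint Fredholm operator commuting with the smaller Clifford algebra $\mathbb{C}\textrm{l}_{i-1}$; its essential spectrum can be essentially positive, essentially negative, or genuinely two-sided. The first two cases furnish two contractible components, since on the corresponding essential spectral subspace one retracts onto a fixed equivariant $\mathbb{C}\textrm{l}_i$-linear isomorphism via equivariant Kuiper. The remaining operators form the third component $\boldsymbol{\mathfrak{F}}_\Gamma^{-i}$.

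The central step is to show that $\phi_{i+1}$ is a homotopy equivalence. Following \cite{AS3}, a homotopy inverse is built from the spectral data of a path $\gamma \in \Omega\mathfrak{F}_\Gamma^{-i}$ near its endpoints: one uses the fact that the endpoint operators lie in $\mathfrak{F}_\Gamma^{-i} \cap \textrm{GL}(\mathcal{H}_\Gamma)$ to extract, via a choice of spectral cut-off, an extra anticommuting Clifford generator that packages the path into an element of $\mathfrak{F}_\Gamma^{-i-1}$. The equivariant enhancement is that every spectral projection here commutes with the $\Gamma$-action, and the trivialization of the resulting equivariant bundles of finite-dimensional eigenspaces is supplied by equivariant Kuiper. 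The verification that the composition is homotopic to the identity in either order is a direct line-by-line adaptation of the non-equivariant argument. The classification $K_\Gamma^{-i}(X) \cong [X,\boldsymbol{\mathfrak{F}}_\Gamma^{-i}]_\Gamma$ then follows by induction on $i$: once $\boldsymbol{\mathfrak{F}}_\Gamma^0$ is known to represent $K_\Gamma^0$ via the equivariant index-bundle construction applied to a family of odd skew-adjoint Fredholms, the loop equivalences propagate the result to all degrees.

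The main obstacle is the equivariant spectral bookkeeping in the homotopy-inverse construction for $\phi_{i+1}$: one must ensure that the cut-off projections, together with the trivializations of the finite-dimensional spectral subspaces they define, are $\Gamma$-equivariant and vary continuously with the path. Completeness of the $\Gamma$-universe $\mathcal{H}_\Gamma$ is essential here, as it guarantees that the relevant equivariant finite-dimensional bundles admit equivariant complements in $\mathcal{H}_\Gamma$ and can thus be absorbed into the ambient space without disturbing the $\mathbb{C}\textrm{l}_i$-structure.
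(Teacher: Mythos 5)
The paper gives no proof of this theorem: it is quoted as background with a pointer to \cite{AS3} and \cite{FHT}, so there is no internal argument to compare against, and your sketch reconstructs essentially the route those references take---equivariant Kuiper's theorem from completeness of the universe $\mathcal{H}_\Gamma$, the essential-spectrum trichotomy for the component count when $i$ is odd, a spectral-cutoff homotopy inverse for $\phi_{i+1}$, and the equivariant index-bundle argument in degree zero---which is correct in outline (and the continuity worries about the conjugation action are indeed vacuous for finite $\Gamma$). One small slip: conjugation by $\gamma_i$ preserves odd skew-adjointness and does not produce a self-adjoint operator; the reduction to the self-adjoint Fredholm picture is effected by multiplying $T$ by the volume element $\gamma_1\cdots\gamma_i$ (with a factor of $i$ depending on $i \bmod 4$), not by conjugating.
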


Recall that there exist universal equivariant Chern characters
\begin{displaymath}
\widetilde{\textrm{ch}}_\Gamma^{-i} \in H^{-i}_\Gamma\left(\boldsymbol{\mathfrak{F}}_\Gamma^{-i};\mathbb{R}\right)
\end{displaymath}
so that, if $f: X \rightarrow \boldsymbol{\mathfrak{F}}_\Gamma^{-i}$ is an equivariant map representing a class $x \in K_\Gamma^{-i}(X)$, then
\begin{displaymath}
f^*\widetilde{\textrm{ch}}_\Gamma^{-i} = \textrm{ch}_\Gamma^{-i}(x)
\end{displaymath}
Because we wish to compare $K$-theory classes with cohomology classes on the level of forms, we shall fix representatives for the universal Chern characters. It will simplify our work somewhat if we make our choices carefully.

To begin, fix a representative
\begin{displaymath}
\tilde{\omega}^0 \in \Omega^0_\Gamma(\boldsymbol{\mathfrak{F}}_\Gamma^0)
\end{displaymath}
for $\widetilde{\textrm{ch}}_\Gamma^0$. We shall use $\tilde{\omega}^0$ to construct representatives for the universal Chern characters of nonzero degree. First, use the family $\lbrace \phi_{k} \rbrace_{k \in \mathbb{N}}$ of smooth maps inductively to define
\begin{displaymath}
\psi_{i} : \left(I^i \times \boldsymbol{\mathfrak{F}}_\Gamma^{-i},\partial I^i \times \boldsymbol{\mathfrak{F}}_\Gamma^{-i}\right) \longrightarrow \left(\boldsymbol{\mathfrak{F}}_\Gamma^0,\boldsymbol{\mathfrak{F}}_\Gamma^0\cap\textrm{GL}(\mathcal{H}_\Gamma)\right)
\end{displaymath}
Namely, set
\begin{equation}
\psi_1(T,t) = (\phi_{1}(T))(t)
\end{equation}
\begin{equation}
\psi_{i+1}(t_1, \dots, t_{i+1},T) = \psi_i\left(t_1, \dots, t_i,(\phi_{i+1}(T))(t_{i+1})\right)
\end{equation}
It is not difficult to show that $\psi_i$ maps $\partial I^i$ into the space of operators that square to $-\textrm{id}$, hence into $\boldsymbol{\mathfrak{F}}_\Gamma^0\cap\textrm{GL}(\mathcal{H}_\Gamma)$. Then the form
\begin{displaymath}
\tilde{\omega}^{-i} \in \Omega^{-i}_\Gamma(\boldsymbol{\mathfrak{F}}_\Gamma^{-i})
\end{displaymath}
defined by
\begin{equation}
\tilde{\omega}^{-i}_g \equiv \int_{I^i \times (\boldsymbol{\mathfrak{F}}_\Gamma^{-i})^g/(\boldsymbol{\mathfrak{F}}_\Gamma^{-i})^g} \psi_i^* \tilde{\omega}^0_g
\end{equation}
is a representative for $\widetilde{\textrm{ch}}_\Gamma^{-i}$. The map
\begin{displaymath}
\textrm{Map}(X,\boldsymbol{\mathfrak{F}}_\Gamma^{-i})_\Gamma \longrightarrow \Omega^{-i}_\Gamma(X) \qquad \textrm{given by} \qquad f \mapsto f^*\tilde{\omega}^{-i}
\end{displaymath}
descends to the homomorphism
\begin{displaymath}
K^{-i}_\Gamma(X) \longrightarrow H^{-i}_\Gamma(X;\mathbb{R})
\end{displaymath}
given by Theorem~\ref{thm:localization}.

\subsection{Main definition}

Roughly speaking, in seeking to define differential equivariant $K$-theory, we are looking for a theory to put in the upper corner of the diagram
\begin{displaymath}
\xymatrix{
\textrm{\framebox[1.1\width]{ ? }} \ar[r] \ar[d] & \Omega^{-i}_\Gamma(X)_\textrm{cl} \ar[d]\\
K_\Gamma^{-i}(X) \ar[r] & H^{-i}_\Gamma(X;\mathbb{R}) }
\end{displaymath}
where $\Omega^{-i}_\Gamma(X)_\textrm{cl} \subset \Omega^{-i}_\Gamma(X)$ is the subring of closed forms.  As a first approximation, one might consider putting into this corner the ring
\begin{displaymath}
A^{-i}_\Gamma(X) \subset K^{-i}_\Gamma(X) \times \Omega^{-i}_\Gamma(X)_\textrm{cl}
\end{displaymath}
of pairs $(x,\omega)$ satisfying $\textrm{ch}_\Gamma^{-i}(x) = \lbrack \omega \rbrack$. However, we wish differential equivariant $K$-theory to be a pullback as a \emph{cohomology} theory.  In other words, we wish a class in $\check{K}_\Gamma^{-i}(X)$ to be a pair $(x,\omega)$ in $A^{-i}_\Gamma(X)$ together with an ``isomorphism'' of the Chern character of $x$ and the cohomology class of $\omega$ in $H^{-i}_\Gamma(X)$.

\begin{defn} \label{defn:bigdef} The differential equivariant $K$-theory of a $\Gamma$-manifold $X$ is defined as follows: $\check{K}_\Gamma^{-i}(X)$ is the set of equivalence classes whose representatives are triples
\begin{displaymath}
(f,\eta,\omega) \in \textrm{Map}(X,\boldsymbol{\mathfrak{F}}_\Gamma^{-i})_\Gamma \times \Omega^{-i-1}_\Gamma(X) \times \Omega^{-i}_\Gamma(X)_\textrm{cl}
\end{displaymath}
satisfying
\begin{displaymath}
d_\Gamma\eta = \omega - f^*\tilde{\omega}^{-i}
\end{displaymath}
Two triples $(f,\eta,\omega)$ and $(f',\eta',\omega')$ are in the same equivalence class if there exist
\begin{displaymath}
F \in \textrm{Map}(I \times X, \boldsymbol{\mathfrak{F}}_\Gamma^{-i})_\Gamma \qquad \textrm{and} \qquad \beta \in \Omega^{-i-2}_\Gamma(X)
\end{displaymath}
so that
\begin{displaymath}
F_0 = f \qquad F_1 = f' \qquad \omega = \omega'
\end{displaymath}
and
\begin{displaymath}
\eta' = \eta - \int_{I \times X/X} F^*\tilde{\omega}^{-i} + d_\Gamma\beta
\end{displaymath}
\end{defn}

\begin{rem} \label{rem:cs} Let
\begin{displaymath}
\Omega^\bullet_\Gamma(X)_K \subset \Omega^\bullet_\Gamma(X)_\textrm{cl}
\end{displaymath}
be the subgroup consisting of forms whose cohomology classes lie in the image of $K^\bullet_\Gamma(X)$ under the Chern character map. If $(f,\eta,\omega)$ and $(f,\eta',\omega)$ are two triples representing classes in $\check{K}_\Gamma^{-i}(X)$, then it follows from the definition that they are equivalent if and only if $\eta$ and $\eta'$ differ by an element of $\Omega_\Gamma^{-i-1}(X)_K$.

A particular application of this fact will be technically useful. Suppose that $\lbrace V_j : j \in \mathbb{Z}/k\mathbb{Z}\rbrace$ is a collection of complex equivariant vector bundles over $X$ with isomorphisms $\varphi_j: V_j \rightarrow V_{j+1}$. Suppose further that, for each $j \in \mathbb{Z}/k\mathbb{Z}$, $h_j$ is a smooth map from the interval $I$ into the space of compatibly unitary connections for $V$, such that $\varphi_j^*h_{j+1}(0) = h_j(1)$. Let $\nabla_j$ denote the connection for $I \times V_j \rightarrow I \times X$ defined as
\begin{displaymath}
h_j(t) + dt \, \partial_t
\end{displaymath}
Set
\begin{displaymath}
\kappa = \sum_{j=1}^k \int_{I \times X/X} \omega\left(I \times V_j;\nabla_j\right)
\end{displaymath}
Then $\kappa \in \Omega_\Gamma^{-1}(X)_K$. It follows that, if $(f,\eta,\omega)$ is a triple representing a class in $\check{K}_\Gamma^0(X)$, then $(f,\eta+\kappa,\omega)$ represents the same class.

Applying similar arguments to $S^i \times X$ obtains an analogous statement for $\check{K}_\Gamma^{-i}(X)$ \end{rem}

\begin{rem} \label{rem:bott} There is a homotopy equivalence $\boldsymbol{\mathfrak{F}}_\Gamma^{-i} \simeq \boldsymbol{\mathfrak{F}}_\Gamma^{-i-2}$, under which $\tilde{\omega}^{-i}$ corresponds to $\tilde{\omega}^{-i-2}$ up to an exact form.  It follows that Bott periodicity generalizes to differential equivariant $K$-theory, which is to say that
\begin{equation}\label{eq:bott}
\check{K}_\Gamma^{-i}(X) \cong \check{K}_\Gamma^{-i-2}(X)
\end{equation}
This allows one to define differential equivariant $K$-theory groups $\check{K}_\Gamma^{i}$ for $i > 0$. \end{rem}

The differential equivariant $K$-theory groups lie in the short exact sequence
\begin{equation} \label{eq:shortexact}
0 \longrightarrow \frac{H^{-i-1}_\Gamma(X;\mathbb{R})}{\textrm{ch}^{-i-1}_\Gamma K_\Gamma^{-i-1}(X)} \longrightarrow \check{K}^{-i}_\Gamma(X) \stackrel{c}{\longrightarrow} A^{-i}_\Gamma(X) \longrightarrow 0
\end{equation}
If $c(\check{x}) = (x,\omega)$, it is natural to call $x$ the \emph{characteristic class} of $\check{x}$ and $\omega$ the \emph{curvature}.  This short exact sequence may be rearranged as
\begin{equation} \label{eq:shortexact2}
0 \longrightarrow \frac{\Omega^{-i-1}_\Gamma(X)}{\Omega^{-i-1}_\Gamma(X)_K} \longrightarrow \check{K}^{-i}_\Gamma(X) \longrightarrow K^{-i}_\Gamma(X) \longrightarrow 0
\end{equation}
where the second map is the characteristic class. A third way to arrange (\ref{eq:shortexact}) and (\ref{eq:shortexact2}) is the short exact sequence
\begin{equation} \label{eq:shortexact3}
0 \longrightarrow K^{-i-1}_\Gamma(X;\mathbb{R}/\mathbb{Z}) \longrightarrow \check{K}^{-i}_\Gamma(X) \stackrel{\omega}{\longrightarrow} \Omega^{-i}_\Gamma(X)_K \longrightarrow 0
\end{equation}
The second map is the curvature.  Its kernel is the set of \emph{flat} differential equivariant $K$-theory classes, an abelian group whose identity component is the kernel of (\ref{eq:shortexact}) and whose group of components is the torsion subgroup of $K_\Gamma^{-i}(X)$ (cf. \cite{Lo}).

We may use these short exact sequences to compute the differential equivariant $K$-theory of a point:

\begin{prop}\label{prop:point} Let $\Gamma$ act trivially on the point.  Then
\begin{equation}
\check{K}_\Gamma^{-i}(\textrm{\emph{pt}}) \cong \left\{ \begin{array} {l l}
R(\Gamma) & \textrm{$i$ \emph{even}}\\
(R(\Gamma) \otimes \mathbb{R})/R(\Gamma) & \textrm{$i$ \emph{odd}}
\end{array}
\right.
\end{equation}
where $R(\Gamma)$ is the ring of virtual characters for $\Gamma$. \end{prop}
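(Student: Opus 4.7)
The plan is to apply the short exact sequence (\ref{eq:shortexact}) after computing each of its outer terms directly on a point. The whole proof reduces to identifying the de Rham complex, its cohomology, and the equivariant $K$-theory of a point, and then reading off the answer by parity of $i$.

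First, I would evaluate $\Omega^{-i}_\Gamma(\mathrm{pt})$. Since $(\mathrm{pt})^g = \mathrm{pt}$ for every $g \in \Gamma$, a form of total degree $-i$ on $\mathrm{pt}$ is just an element of $\pi K_{\mathbb{C}}$ of degree $-i$, i.e.\ a multiple of $u^{i/2}$ when $i$ is even and zero when $i$ is odd. The $\Gamma$-invariance condition says that the function $g \mapsto \omega_g$ is a class function, and the reality condition $\omega_{g^{-1}} = \overline{\omega_g}$ is exactly the condition satisfied by characters of virtual representations. Consequently, for even $i$ one gets $\Omega^{-i}_\Gamma(\mathrm{pt}) = \Omega^{-i}_\Gamma(\mathrm{pt})_{\mathrm{cl}} \cong R(\Gamma) \otimes \mathbb{R}$, and for odd $i$ one gets $\Omega^{-i}_\Gamma(\mathrm{pt}) = 0$. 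Since the de Rham differential $d_\Gamma$ is zero on a point, the same identifications give $H^{-i}_\Gamma(\mathrm{pt};\mathbb{R}) \cong R(\Gamma) \otimes \mathbb{R}$ for even $i$ and $0$ for odd $i$. Finally, classical equivariant $K$-theory of a point is $K^{-i}_\Gamma(\mathrm{pt}) \cong R(\Gamma)$ for even $i$ and $0$ for odd $i$, and the map $\textrm{ch}^{-i}_\Gamma$ is identified with the usual inclusion of virtual representations into class functions via the character, i.e.\ with $R(\Gamma) \hookrightarrow R(\Gamma) \otimes \mathbb{R}$.

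With these ingredients, I would then substitute into (\ref{eq:shortexact}), treating the two parities separately. For $i$ even: the left-hand term $H^{-i-1}_\Gamma(\mathrm{pt};\mathbb{R})/\textrm{ch}^{-i-1}_\Gamma K^{-i-1}_\Gamma(\mathrm{pt})$ vanishes because $-i-1$ is odd; meanwhile $A^{-i}_\Gamma(\mathrm{pt})$ maps isomorphically to $K^{-i}_\Gamma(\mathrm{pt}) \cong R(\Gamma)$, since on a point every closed form is uniquely determined by its cohomology class and hence by the Chern character of the $K$-theory class with which it is paired. The short exact sequence therefore collapses to $\check{K}^{-i}_\Gamma(\mathrm{pt}) \cong R(\Gamma)$. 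For $i$ odd: the right-hand term $A^{-i}_\Gamma(\mathrm{pt})$ vanishes, since both $K^{-i}_\Gamma(\mathrm{pt})$ and $\Omega^{-i}_\Gamma(\mathrm{pt})$ are zero; while the left-hand term becomes $(R(\Gamma) \otimes \mathbb{R})/R(\Gamma)$, using that the Chern character on $K^{-i-1}_\Gamma(\mathrm{pt}) \cong R(\Gamma)$ is the natural inclusion into $H^{-i-1}_\Gamma(\mathrm{pt};\mathbb{R}) \cong R(\Gamma) \otimes \mathbb{R}$. Hence $\check{K}^{-i}_\Gamma(\mathrm{pt}) \cong (R(\Gamma) \otimes \mathbb{R})/R(\Gamma)$.

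There is no real obstacle here: the only point requiring any care is the identification of the $\Gamma$-invariant real forms on a point with $R(\Gamma) \otimes \mathbb{R}$, which relies on the standard fact that characters of finite group representations satisfy $\chi(g^{-1}) = \overline{\chi(g)}$ and span the class functions. Everything else is bookkeeping with the two parities and an appeal to (\ref{eq:shortexact}).
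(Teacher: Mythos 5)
Your proof is correct and follows essentially the same route as the paper: both arguments compute the forms, cohomology, and equivariant $K$-theory of a point and then read the answer off the short exact sequences, with the odd case handled via (\ref{eq:shortexact}) exactly as in the paper. The only cosmetic differences are that for even $i$ the paper invokes (\ref{eq:shortexact2}) (odd-degree forms on a point vanish) where you invoke (\ref{eq:shortexact}), and that with the paper's grading a degree $-i$ form on a point is a multiple of $u^{-i/2}$ rather than $u^{i/2}$ --- a harmless slip that does not affect the argument.
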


\begin{proof} If $i$ is even, then $\Omega^{-i-1}_\Gamma(\textrm{pt})$ is trivial and (\ref{eq:shortexact2}) implies that
\begin{displaymath}
\check{K}_\Gamma^{-i}(\textrm{pt}) \cong K_\Gamma^{-i}(\textrm{pt}) \cong R(\Gamma)
\end{displaymath}
On the other hand, if $i$ is odd, then (\ref{eq:shortexact}) implies that
\begin{displaymath}
\qquad \check{K}_\Gamma^{-i}(\textrm{pt}) \cong \frac{H^{-i-1}_\Gamma(\textrm{pt};\mathbb{R})}{K_\Gamma^{-i-1}(\textrm{pt})} \cong \frac{R(\Gamma) \otimes \mathbb{R}}{R(\Gamma)}
\end{displaymath}
since $A_\Gamma^{-i}(\textrm{pt})$ is trivial.  \end{proof}

A definition for degree zero differential equivariant $K$-theory has been proposed by Szabo and Valentino in \cite{SV}.  The main difference between our approach and that of \cite{SV} is that most of the statements of the latter are couched in the language of Bredon cohomology, whose definition we briefly recall.

Let $\textrm{Or}(\Gamma)$ be the orbit category for $\Gamma$, whose objects are homogeneous spaces $\Gamma/H$ and whose morphisms are the $\Gamma$-maps between them. Define the contravariant functor $\underline{R}(-): \textrm{Or}(\Gamma)^\textrm{op} \rightarrow \textrm{Ab}$, where $\textrm{Ab}$ is the category of abelian groups, by
\begin{displaymath}
\Gamma/H \mapsto R(H)
\end{displaymath}
Next, given a CW-complex $X$ with cellular action of $\Gamma$, define the contravariant functor $\underline{C}_n(X): \textrm{Or}(\Gamma)^\textrm{op} \rightarrow \textrm{Ab}$ by
\begin{displaymath}
\Gamma/H \mapsto C_n(X^H)
\end{displaymath}
where $X^H$ is the $H$-fixed point set.  Let $C_\Gamma^n(X,\underline{R})$ be the group of natural transformations from $\underline{C}_n(X)$ to $\underline{R}(-)$.  This group has a natural coboundary operator given by the boundary operator on $C_n$, and Bredon cohomology is defined as the cohomology $H_\Gamma^n(X;\underline{R}(-))$ of this cochain complex.

The Bredon cohomological equivariant Chern character, as constructed in \cite{Lu}, for instance, takes values in $H_\Gamma^\bullet(X;\mathbb{R} \otimes \underline{R}(-))$, and, after tensoring with $\mathbb{R}$, one has the isomorphism
\begin{equation}
K_\Gamma^\bullet(X) \otimes \mathbb{R} \cong H_\Gamma^\bullet(X;\mathbb{R} \otimes \underline{R}(-))
\end{equation}
(cf. \cite[Theorem 2.5]{SV}), whence it follows that
\begin{equation}
H_\Gamma^\textrm{even,odd}(X;\mathbb{R} \otimes \underline{R}(-)) \cong H_\Gamma^{0,-1}(X;\mathbb{R})
\end{equation}
In fact, the authors of \cite{SV} present a dimension-counting argument to deduce that
\begin{equation}\label{eq:bredon}
H_\Gamma^\bullet(X;\mathbb{R} \otimes \underline{R}(-)) \cong \bigoplus_{\lbrack g \rbrack \subset \Gamma} H^\bullet(X^g;\mathbb{R})^{Z_g}
\end{equation}
where the summation is over conjugacy classes in $\Gamma$ and $Z_g$ is the centralizer of $g$.  The latter cohomology is naturally identified with the real subspace of $H^{0,-1}_\Gamma(X;\mathbb{C})$ under the real structure induced by complex conjugation, rather than that induced by Theorem~\ref{thm:localization}, the Atiyah-Segal localization theorem.  The composition of the Bredon cohomological Chern character on $K_\Gamma^0(X)$ with (\ref{eq:bredon}) thus yields a class in $H^0_\Gamma(X;\mathbb{C})$ that is real under the former real structure.  A representative for a class in differential equivariant $K$-theory is composed of a map from $X$ into the space of Fredholm operators and a differential form in $\Omega^0_\Gamma(X;\mathbb{C})$ which is real under the real structure induced by conjugation, together with an ``isomorphism'' between the two, just as in our definition, except that the Bredon cohomological Chern character is used instead.  It is clear, however, that an application of the Five Lemma to, say, our short exact sequence (\ref{eq:shortexact}) and their version of the same, the sequence \cite[(5.12)]{SV}, suffices to prove that the groups we define are isomorphic.

The authors of \cite{SV} argue that the machinery developed in \cite{HS} cannot be immediately applied to the present context, and that their introduction of Bredon cohomology is required both by the equivariant Chern character isomorphism and by the explicit use of differential forms, neither of which can be accommodated by Borel equivariant cohomology, as constructed in \cite{GS}, for instance.  The authors remark, rightly, that to use the Borel theory would be to ignore important data.  It would seem, however, that for a finite group $\Gamma$, the complex $(\Omega^\bullet_\Gamma(X),d_\Gamma)$ together with the equivariant Chern character of Atiyah and Segal give all that is desired.  Furthermore, we shall see that a closer reliance on differential forms and the Weil homomorphism makes a construction employing this complex more conducive to the definition of a ring structure and an integration on $\check{K}_\Gamma^\bullet(X)$.

\subsection{Special cases}

The ordinary differential $K$-theory ring for a smooth manifold $X$ is defined as
\begin{equation}
\check{K}^\bullet(X) \equiv \check{K}_{\lbrace e \rbrace}^\bullet(X)
\end{equation}
where $\lbrace e \rbrace$ denotes the trivial group (cf. \cite{HS}, \cite{K}).  We know that there are natural isomorphisms relating equivariant $K$-theory to ordinary $K$-theory in two special cases, namely,
\begin{equation}
K^\bullet_\Gamma(X) \cong K^\bullet(X) \otimes R(\Gamma)
\end{equation}
if $\Gamma$ acts trivially, and
\begin{equation}
K^\bullet_\Gamma(X) \cong K^\bullet(X/\Gamma)
\end{equation}
if $\Gamma$ acts freely.  We now establish analogous isomorphisms in differential equivariant $K$-theory.

\begin{prop} \label{prop:free} Let $X$ be a $\Gamma$-manifold with free $\Gamma$-action.  Then
\begin{equation}
\check{K}_\Gamma^\bullet(X) \cong \check{K}^\bullet(X/\Gamma)
\end{equation}
\end{prop}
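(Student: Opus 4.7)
The plan is to apply the Five Lemma to the short exact sequence (\ref{eq:shortexact}), using the classical identifications of forms and equivariant $K$-theory that hold for a free action. Because $\Gamma$ acts freely, $X^g = \emptyset$ for every $g \neq e$, so the definition of $\Omega^\bullet_\Gamma(X)$ collapses to the ring of $\Gamma$-invariant forms on $X$ valued in $\pi K_{\mathbb{R}}$. Pullback $\pi^*: \Omega^\bullet(X/\Gamma) \to \Omega^\bullet_\Gamma(X)$ is then an isomorphism of differential complexes, inducing $H^\bullet_\Gamma(X;\mathbb{R}) \cong H^\bullet(X/\Gamma;\mathbb{R})$. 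The classical isomorphism $\pi^*: K^\bullet(X/\Gamma) \stackrel{\sim}{\to} K^\bullet_\Gamma(X)$ (sending $V$ to $\pi^*V$ with trivial $\Gamma$-action on fibers) intertwines the ordinary and equivariant Chern characters, so it restricts to $\Omega^\bullet(X/\Gamma)_K \cong \Omega^\bullet_\Gamma(X)_K$ and hence gives $A^{-i}(X/\Gamma) \cong A^{-i}_\Gamma(X)$.

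I would then construct a comparison map $\Phi: \check{K}^{-i}(X/\Gamma) \to \check{K}^{-i}_\Gamma(X)$ at the level of representatives. The form components pull back directly via $\pi^*$. For the map component, I would introduce an embedding $\iota: \boldsymbol{\mathfrak{F}}^{-i} \hookrightarrow \boldsymbol{\mathfrak{F}}^{-i}_\Gamma$ into the $\Gamma$-pointwise fixed subspace. The decomposition $\mathcal{H}_\Gamma = \mathcal{H} \otimes W_\Gamma$ into $\Gamma$-isotypic components identifies the trivial isotypic with $\mathcal{H}$, and setting
\[
\iota(T) = T \oplus S_0,
\]
where $S_0$ is a fixed $\Gamma$-equivariant invertible odd skew-adjoint $\mathbb{C}\textrm{l}_i$-linear operator on the orthogonal complement (realized as $\bigoplus_{\rho \neq \textrm{triv}} A_\rho \otimes \textrm{id}_{V_\rho}$), gives such an embedding. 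Because $S_0$ is invertible, the index bundle of $\iota(T)$ sits entirely in the trivial isotypic, so $\iota$ classifies the map $V \mapsto \pi^*V$ on $K$-theory. I would then fix compatible universal representatives by first choosing $\tilde\omega^0_\Gamma$ and setting $\tilde\omega^0 \equiv \iota^*(\tilde\omega^0_\Gamma)_e$, propagating through the construction of the $\psi_i$. With these in place, the assignment
\[
(f, \eta, \omega) \longmapsto (\iota \circ f \circ \pi,\ \pi^*\eta,\ \pi^*\omega)
\]
sends triples to triples, and pulling back homotopy data $(F, \beta)$ along $\textrm{id}_I \times \pi$ and $\pi$ shows that $\Phi$ descends to equivalence classes.

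With $\Phi$ in hand, I would verify that it fits into the ladder of short exact sequences (\ref{eq:shortexact}):
\[
\xymatrix{
0 \ar[r] & \frac{H^{-i-1}(X/\Gamma;\mathbb{R})}{\textrm{ch}^{-i-1} K^{-i-1}(X/\Gamma)} \ar[r] \ar[d]^{\cong} & \check{K}^{-i}(X/\Gamma) \ar[r]^{c} \ar[d]^{\Phi} & A^{-i}(X/\Gamma) \ar[r] \ar[d]^{\cong} & 0 \\
0 \ar[r] & \frac{H^{-i-1}_\Gamma(X;\mathbb{R})}{\textrm{ch}^{-i-1}_\Gamma K^{-i-1}_\Gamma(X)} \ar[r] & \check{K}^{-i}_\Gamma(X) \ar[r]^{c} & A^{-i}_\Gamma(X) \ar[r] & 0
}
\]
whose outer verticals are the isomorphisms assembled in the first paragraph and whose commutativity follows from the compatibility of the chosen universal representatives. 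The Five Lemma then yields that $\Phi$ is an isomorphism, and Bott periodicity (Remark~\ref{rem:bott}) extends the conclusion to all degrees.

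The hard part will be the construction of $\iota$. The naive choice $T \mapsto T \otimes \textrm{id}_{W_\Gamma}$ classifies $V \mapsto \pi^*V \otimes [W_\Gamma]$ on $K$-theory, which is not an isomorphism since $[W_\Gamma] \in R(\Gamma)$ is typically not a unit---it is already a zero divisor for $\Gamma = \mathbb{Z}/2$, since $(1+\epsilon)^2 = 2(1+\epsilon)$. The invertible-padding construction above corrects this, but requires care in arranging that $\iota(T)$ lies in the correct path component $\boldsymbol{\mathfrak{F}}^{-i}_\Gamma$ and in ensuring compatibility with the inductive definition of the $\psi_i$.
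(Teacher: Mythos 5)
Your proposal follows the same skeleton as the paper's proof of Proposition~\ref{prop:free}: identify $\Omega^\bullet_\Gamma(X)$ with forms on $X/\Gamma$ and $K^\bullet_\Gamma(X)$ with $K^\bullet(X/\Gamma)$, define a comparison map on representative triples by composing classifying maps with an embedding $\boldsymbol{\mathfrak{F}}^{-i} \hookrightarrow \boldsymbol{\mathfrak{F}}^{-i}_\Gamma$ and pulling the forms back along the quotient map, and then apply the Five Lemma to the ladder built from (\ref{eq:shortexact}), extending to all degrees by Remark~\ref{rem:bott}. Where you genuinely differ is in the embedding, and your objection to the naive choice is well taken: the paper itself uses $T \mapsto T \otimes \textrm{id}$ and asserts that the universal Chern character forms may be assumed to agree under this inclusion. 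In fact the equivariant index bundle of the family $x \mapsto T_{p(x)} \otimes \textrm{id}_{W_\Gamma}$ is $p^*(\textrm{ind}\,T) \otimes \underline{W}_\Gamma$, so on characteristic classes that assignment is multiplication by the class of the flat bundle $X \times_\Gamma W_\Gamma$ of rank $\dim W_\Gamma$ rather than the canonical descent isomorphism of (\ref{eq:isoms}); correspondingly the $e$-component of the pullback of $\widetilde{\textrm{ch}}^{-i}_\Gamma$ under this inclusion is $\dim W_\Gamma$ times $\widetilde{\textrm{ch}}^{-i}$, so the asserted agreement of universal representatives cannot be arranged even cohomologically once $\Gamma$ is nontrivial. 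Your invertible-padding embedding $\iota(T) = T \oplus S_0$ repairs exactly this: the index sits in the trivial isotypic summand with its canonical equivariant structure, $\iota$ induces $x \mapsto p^*x$, the outer verticals of your ladder become the canonical isomorphisms, and the Five Lemma then applies cleanly. One small correction to your discussion: for a free action the relevant obstruction is not that $[W_\Gamma]$ is a zero divisor in $R(\Gamma)$ (after descent the composite is multiplication by $[X \times_\Gamma W_\Gamma] \in K^0(X/\Gamma)$, not by an element of $R(\Gamma)$); the cleanest reason the naive map fails to be an isomorphism is simply that this class has rank $\dim W_\Gamma \geq 2$, so the image misses all classes of nondivisible virtual rank. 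What remains to be written out in your version is what you yourself flag: the choice of $S_0$ on each nontrivial isotypic summand as an invertible odd skew-adjoint Clifford-linear operator that is not essentially definite, so that for $i$ odd $\iota(T)$ stays in the distinguished component $\boldsymbol{\mathfrak{F}}^{-i}_\Gamma$ rather than a contractible one (possible since every isotypic piece of $\mathcal{H}_\Gamma$ has infinite multiplicity); the compatibility of the $S_0$'s with the maps $\phi_{i+1}$ and $\psi_i$ so that $\iota^*\tilde{\omega}^{-i}_\Gamma$ may serve as $\tilde{\omega}^{-i}$ in every degree; and, as in the text, the additivity of the representative-level assignment. These are routine but should be carried out; with them your argument is a complete proof, and in this respect more careful than the one given in the paper.
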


\begin{proof} Recall that we defined $\mathcal{H}_\Gamma$ as $\mathcal{H} \otimes W_\Gamma$, where $W_\Gamma$ is a $\Gamma$-representation in whose decomposition each irreducible representation appears once.  Let $\phi_i: \boldsymbol{\mathfrak{F}}_{\lbrace e \rbrace}^{-i} \hookrightarrow \boldsymbol{\mathfrak{F}}_\Gamma^{-i}$ be the inclusion map given by $T \mapsto T \otimes \textrm{id}$.  We may assume without loss of generality that the universal connections have been so chosen that the universal Chern character forms agree under this inclusion.

Notice that there exist natural isomorphisms
\begin{equation} \label{eq:isoms}
\Omega(X/\Gamma;\pi K_\mathbb{R})^\bullet \cong \Omega^\bullet_\Gamma(X) \qquad K^\bullet(X/\Gamma) \cong K_\Gamma^\bullet(X)
\end{equation}
Let $p: X \rightarrow X/\Gamma$ be the quotient map.  To a triple $(f,\eta,\omega)$ representing a class in $\check{K}^{-i}(X/\Gamma)$ associate the triple
\begin{displaymath}
(\phi_i \circ f \circ p, p^*\eta, p^*\omega)
\end{displaymath}
This descends to a homomorphism
\begin{displaymath}
\check{K}^{-i}(X/\Gamma) \longrightarrow \check{K}^{-i}_\Gamma(X)
\end{displaymath}
that agrees with the isomorphisms (\ref{eq:isoms}).  It thus fits into the commutative diagram
\begin{equation}
\xymatrix{
0 \ar[r] & \frac{H(X/\Gamma;\pi K_\mathbb{R})^{-i-1}}{\textrm{ch}^{-i-1}K^{-i}(X/\Gamma)} \ar[r] \ar[d] & \check{K}^{-i}(X/\Gamma) \ar[r] \ar[d] & A^{-i}(X/\Gamma) \ar[r] \ar[d] & 0\\
0 \ar[r] & \frac{H^{-i-1}_\Gamma(X;\mathbb{R})}{\textrm{ch}_\Gamma^{-i-1}K^{-i-1}_\Gamma(X)} \ar[r] & \check{K}_\Gamma^{-i}(X) \ar[r] & A^{-i}_\Gamma(X) \ar[r] & 0 }
\end{equation}
corresponding to the exact sequence (\ref{eq:shortexact}).  The first and third homomorphisms are isomorphisms, so an application of the Five Lemma suffices to prove that $\check{K}_\Gamma^\bullet(X) \cong \check{K}^\bullet(X/\Gamma)$. \end{proof}

\begin{prop} \label{prop:trivial} Let $X$ be a $\Gamma$-manifold with trivial $\Gamma$-action.  Then
\begin{displaymath}
\check{K}_\Gamma^\bullet(X) \cong \check{K}^\bullet(X) \otimes R(\Gamma)
\end{displaymath}
\end{prop}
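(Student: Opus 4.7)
The proof parallels that of Proposition~\ref{prop:free}. Since $\Gamma$ acts trivially on $X$, one has $X^g = X$ for all $g \in \Gamma$, and the $\Gamma$-invariance in the definition of $\Omega^\bullet_\Gamma(X)$ reduces---via conjugation---to the condition that $\omega_g$ depends only on the conjugacy class of $g$. Together with the reality constraint $\omega_{g^{-1}} = \overline{\omega_g}$, this yields a natural identification of $\Omega^{-i}_\Gamma(X)$ with $\Omega^{-i}(X) \otimes_{\mathbb{Z}} R(\Gamma)$: a simple tensor $\omega \otimes W$ corresponds to the collection $g \mapsto \chi_W(g)\,\omega$. Restricting to closed forms and passing to cohomology gives parallel identifications $\Omega^{-i}_\Gamma(X)_{\textrm{cl}} \cong \Omega^{-i}(X)_{\textrm{cl}} \otimes R(\Gamma)$ and $H^{-i}_\Gamma(X;\mathbb{R}) \cong H^{-i}(X;\mathbb{R}) \otimes R(\Gamma)$. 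Combined with the classical isomorphism $K^{-i}_\Gamma(X) \cong K^{-i}(X) \otimes R(\Gamma)$ and the fact that $\textrm{ch}_\Gamma^{-i}$ sends $[V] \otimes W$ to $\textrm{ch}^{-i}([V]) \cdot \chi_W$, these identifications match each flanking term of the sequence (\ref{eq:shortexact}) for $\check{K}^{-i}(X)$, tensored with $R(\Gamma)$, with the corresponding term for $\check{K}^{-i}_\Gamma(X)$.

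To construct the map itself, the embedding $W \hookrightarrow W_\Gamma$ gives rise to an inclusion $\iota_W: \boldsymbol{\mathfrak{F}}^{-i}_{\{e\}} \hookrightarrow \boldsymbol{\mathfrak{F}}^{-i}_\Gamma$ defined by $T \mapsto T \otimes \textrm{id}_W$. As in Proposition~\ref{prop:free}, one may choose the universal connections so that $\iota_W^*\tilde{\omega}^{-i}_\Gamma$ equals $\chi_W \cdot \tilde{\omega}^{-i}$ componentwise. To a representative $(f,\eta,\omega)$ of a class in $\check{K}^{-i}(X)$ and a representation $W$ assign the triple
\begin{displaymath}
(\iota_W \circ f,\ \chi_W \cdot \eta,\ \chi_W \cdot \omega),
\end{displaymath}
where $(\chi_W \cdot \eta)_g \equiv \chi_W(g)\,\eta$ and likewise for $\omega$. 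The defining relation $d_\Gamma(\chi_W \cdot \eta) = \chi_W \cdot \omega - (\iota_W \circ f)^* \tilde{\omega}^{-i}_\Gamma$ is then immediate. Extending bilinearly and running the same tensoring construction on cylinders $I \times X$ to handle equivalences yields a well-defined homomorphism $\Psi: \check{K}^{-i}(X) \otimes R(\Gamma) \to \check{K}^{-i}_\Gamma(X)$.

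Because $R(\Gamma)$ is a free abelian group, hence flat over $\mathbb{Z}$, tensoring (\ref{eq:shortexact}) with $R(\Gamma)$ preserves exactness, and $\Psi$ fits into a morphism of short exact sequences whose flanking vertical maps are the isomorphisms identified in the first paragraph. The Five Lemma then implies that $\Psi$ is an isomorphism. The principal subtlety, exactly as in Proposition~\ref{prop:free}, is the arrangement of universal connections so that the pullback formula $\iota_W^*\tilde{\omega}^{-i}_\Gamma = \chi_W \cdot \tilde{\omega}^{-i}$ holds on the level of forms rather than merely in cohomology; any discrepancy is absorbed into a Chern--Simons correction lying in $\Omega^{-i-1}_\Gamma(X)_K$, which by Remark~\ref{rem:cs} does not alter the resulting equivalence class.
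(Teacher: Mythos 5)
Your argument is, in substance, the paper's proof run in the opposite direction: the paper starts from an equivariant triple $(f,\eta,\omega)$, observes that $f$ takes values in the $\Gamma$-fixed operators, and uses Schur's lemma on the isotypic decomposition $\mathcal{H}_\Gamma = \bigoplus_\chi \mathcal{H}\otimes W_\chi$ to write $f = \sum_\chi f_\chi\otimes\textrm{id}_{W_\chi}$ and $\Omega^\bullet_\Gamma(X)\cong\Omega(X;\pi K_\mathbb{R})^\bullet\otimes R(\Gamma)$, thereby defining a map $\check{K}^{-i}_\Gamma(X)\to\check{K}^{-i}(X)\otimes R(\Gamma)$ and concluding with the Five Lemma applied to (\ref{eq:shortexact2}); you assemble rather than decompose, and apply the Five Lemma to (\ref{eq:shortexact}). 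The ingredients (isotypic decomposition, identification of the form complexes, Five Lemma) are identical, so this is not a genuinely different route.

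There is, however, one concrete defect in your construction. For a proper subrepresentation $W=W_\chi\subsetneq W_\Gamma$, the operator $T\otimes\textrm{id}_W$ acts only on the subspace $\mathcal{H}\otimes W\otimes\mathbb{C}\textrm{l}_i$ of $\mathcal{H}_\Gamma\otimes\mathbb{C}\textrm{l}_i$, so your $\iota_W$ does not land in $\boldsymbol{\mathfrak{F}}^{-i}_\Gamma$ as written; the analogous inclusion in Proposition~\ref{prop:free} is unproblematic only because there one tensors with $\textrm{id}$ on all of $W_\Gamma$. You need to extend $T\otimes\textrm{id}_{W_\chi}$ over the complementary isotypic summands $\mathcal{H}\otimes W_{\chi'}$, $\chi'\neq\chi$, by a fixed $\Gamma$-equivariant invertible odd skew-adjoint operator (checking, for $i$ odd, that the result stays in the distinguished component). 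Such an extension changes neither the represented $K$-theory class nor the Chern character in cohomology, and the resulting form-level discrepancy with $\chi_W\cdot\tilde{\omega}^{-i}$ is exactly of the type your last sentence absorbs by a fixed transgression term (cf.\ Remark~\ref{rem:cs}); so the repair is routine, but it is needed, and it also indicates why arranging the pullback identity on the nose simultaneously for all irreducible $\chi$ is more delicate than the single compatibility assumed in Proposition~\ref{prop:free}. The paper's direction avoids this point entirely, since an invariant operator on all of $\mathcal{H}_\Gamma$ decomposes canonically by Schur's lemma, which is the main practical advantage of decomposing rather than assembling.
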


\begin{proof} Let $(f,\eta,\omega)$ be a representative triple for a class in $\check{K}_\Gamma^{-i}(X)$.  The image of $X$ under $f$ lies in the subspace $(\boldsymbol{\mathfrak{F}}_\Gamma^{-i})^\Gamma \subset \boldsymbol{\mathfrak{F}}_\Gamma^{-i}$ of operators that commute with $\Gamma$-action on $\mathcal{H}_\Gamma$. Decompose $W_\Gamma$ into a direct sum of irreducible representations, writing
\begin{displaymath}
W_\Gamma = \bigoplus_\chi W_\chi,
\end{displaymath}
where $W_\chi$ is the representation with character $\chi$, and $\chi$ ranges over the characters of the irreducible representations. We thus have
\begin{equation}
\mathcal{H}_\Gamma = \bigoplus_\chi \mathcal{H} \otimes W_\chi
\end{equation}
and each operator $T \in (\boldsymbol{\mathfrak{F}}_\Gamma^{-i})^\Gamma$ is of the form
\begin{displaymath}
T = \sum_{\chi} T_\chi \otimes \textrm{id}_{W_\chi} \qquad T_\chi \in \boldsymbol{\mathfrak{F}}_{\lbrace e \rbrace}^{-i}
\end{displaymath}
We decompose $f$ accordingly, writing
\begin{displaymath}
f(x) = \sum_\chi f_\chi(x) \otimes \textrm{id}_{W_\chi} \qquad f_\chi: X \longrightarrow \boldsymbol{\mathfrak{F}}_{\lbrace e \rbrace}^{-i}
\end{displaymath}
Furthermore, there is a natural isomorphism
\begin{equation}
\Omega^\bullet_\Gamma(X) \cong \Omega(X;\pi K_\mathbb{R})^\bullet \otimes R(\Gamma)
\end{equation}
so we decompose $\omega$ as a sum of forms $\omega_\chi$, and $\eta$ as a sum of forms $\eta_\chi$. We thus have
\begin{equation}
\omega_g = \sum_\chi \omega_\chi \cdot \chi(g)
\end{equation}
and similarly for $\eta_g$.  It is easily checked that $(f_\chi,\eta_\chi,\omega_\chi)$ is a representative triple for a class in $\check{K}^{-i}(X)$.

The homomorphism
\begin{displaymath}
\check{K}_\Gamma^\bullet(X) \longrightarrow \check{K}^\bullet(X) \otimes R(\Gamma)
\end{displaymath}
is defined thus: to the class represented by the triple $(f,\eta,\omega)$, associate the class represented by
\begin{displaymath}
\sum_\chi (f_\chi,\eta_\chi,\omega_\chi) \otimes \chi
\end{displaymath}
An application of the Five Lemma to the exact sequence (\ref{eq:shortexact2}) proves that this is an isomorphism.  \end{proof}


\section{Alternative definitions}

In this section, we present two alternative definitions for the differential equivariant $K$-theory groups. We begin by recalling the required constructions and fixing notation. In Subsection 3.2, we give a slightly modified, but equivalent, definition of differential equivariant $K$-theory for compact spaces. This model is useful for defining a natural ring structure, as we shall see. We use the modified definition to establish the equivalence of the geometric description in Subsection 3.3.

\subsection{Finite-dimensional approximations}

As before, let $\mathcal{H}_\Gamma$ be a complex separable $\mathbb{Z}/2\mathbb{Z}$-graded infinite-dimensional Hilbert space which is a $\Gamma$-representation in whose decomposition each finite-dimensional representation appears infinitely many times. For each $T \in
\textrm{Fred}(\mathcal{H}_\Gamma^0)$, set
\begin{displaymath}
\hat{T} \equiv \left( \begin{array}{c c}
0 & -T^*\\
T & 0 \end{array} \right) \in \boldsymbol{\mathfrak{F}}_\Gamma^0
\end{displaymath}
Then the map given by $T \mapsto \hat{T}$ is a diffeomorphism from $\textrm{Fred}(\mathcal{H}_\Gamma^0)$ to $\boldsymbol{\mathfrak{F}}_\Gamma^0$. Recall that $\textrm{Fred}(\mathcal{H}_\Gamma^0)$ is of the homotopy type of $\mathbb{Z} \times \textrm{BU}$: the index yields a bijection from the set of connected components to $\mathbb{Z}$. For each $k \in \mathbb{Z}$, let
\begin{displaymath}
\boldsymbol{\mathfrak{F}}_{\Gamma,k}^0 \equiv \lbrace \hat{T} \in \boldsymbol{\mathfrak{F}}_\Gamma^0: \textrm{ind}(T) = k \rbrace
\end{displaymath}
There is a canonical open cover for $\boldsymbol{\mathfrak{F}}_\Gamma^0$ defined as follows. Let $\mathcal{G}_\Gamma$ denote the space of finite-dimensional $\Gamma$-invariant subspaces $W \subset \mathcal{H}_\Gamma^1$, and, for each $W$ in $\mathcal{G}_\Gamma$, define
\begin{displaymath}
\mathcal{O}_W \equiv \left\{ \hat{T} \in \boldsymbol{\mathfrak{F}}_\Gamma^0: T(\mathcal{H}_\Gamma^0) + W = \mathcal{H}_\Gamma^1 \right\} \subset \boldsymbol{\mathfrak{F}}_\Gamma^0
\end{displaymath}
Each $\mathcal{O}_W$ is an invariant open subset of $\boldsymbol{\mathfrak{F}}_\Gamma^0$, and $\lbrace \mathcal{O}_W \rbrace_{W \in \mathcal{G}_\Gamma}$ is an open cover for $\boldsymbol{\mathfrak{F}}_\Gamma^0$. Furthermore, for each $W \in \mathcal{G}_\Gamma$, there is a canonical graded equivariant vector bundle
\begin{displaymath}
V_W \longrightarrow \mathcal{O}_W
\end{displaymath}
whose fiber over $\hat{T} \in \mathcal{O}_W$ is given as
\begin{displaymath}
(V_W^0)_{\hat{T}} = T^{-1}(W) \qquad (V_W^1)_{\hat{T}} = W
\end{displaymath}
The restriction of $V_W^0$ to $\mathcal{O}_W \cap \boldsymbol{\mathfrak{F}}_{\Gamma,k}^0$ is of rank $(\textrm{dim}\,W + k)$.

We have seen that $\boldsymbol{\mathfrak{F}}_\Gamma^0$ is a classifying space for degree zero equivariant $K$-theory. The manner in which the two types of representatives for $K$-theory classes---namely, maps and graded vector bundles---relate to each other may be seen concretely as follows. First, if $X$ is a compact manifold and $f$ is an equivariant map $X \rightarrow \boldsymbol{\mathfrak{F}}_\Gamma^0$, then there exists $W \in \mathcal{G}_\Gamma$ such that $f(X) \subset \mathcal{O}_W$. Then $\lbrack f^*V_W \rbrack \in K_\Gamma^0(X)$ is the class represented by $f$. Now suppose, on the other hand, that $V = V^0 \oplus V^1$ is a graded equivariant vector bundle over $X$. Let $V' \rightarrow X$ be an ungraded bundle such that $V^1 \oplus V'$ is trivial. Choose a trivialization, and identify the fiber with some $W \in \mathcal{G}_\Gamma$. Let $T$ be a Fredholm operator with cokernel $W$ and trivial kernel. We know by Kuiper's Theorem that the Hilbert bundle $V^0 \oplus V' \oplus \mathcal{H}_\Gamma^0$ is trivializable; trivialize it, and choose an isomorphism from the fiber to $\mathcal{H}_\Gamma^0$. Let $\lbrace e_i \rbrace_{i \in \mathbb{N}}$ be an orthonormal basis for $\mathcal{H}_\Gamma^0$, and, for each $x \in X$, let $T_x \in \textrm{Fred}(\mathcal{H}_\Gamma^0)$ be the operator induced by the linear map on $V^0_x \oplus V'_x \oplus \mathcal{H}_\Gamma^0$ given by
\begin{displaymath}
(v,e_i) \mapsto \left\{ \begin{array}{l l}
0 & i = 1\\
(v,e_{i-1}) & i > 1 \end{array} \right. \qquad v \in V_x^0 \oplus V'_x
\end{displaymath}
Then the map $f: X \rightarrow \boldsymbol{\mathfrak{F}}_\Gamma^0$ defined by $x \mapsto \widehat{T \circ T_x}$ is a classifying map for $\lbrack V \rbrack$.

Recall that there exists a universal equivariant Chern character
\begin{displaymath}
\widetilde{\textrm{ch}}_\Gamma \in H^0_\Gamma\left(\boldsymbol{\mathfrak{F}}_\Gamma^0;\mathbb{R}\right)
\end{displaymath}
so that, if $f: X \rightarrow \boldsymbol{\mathfrak{F}}_\Gamma^0$ is an equivariant map representing a class $x \in K_\Gamma^0(X)$, then
\begin{displaymath}
f^*\widetilde{\textrm{ch}}_\Gamma = \textrm{ch}_\Gamma(x)
\end{displaymath}
In defining differential equivariant $K$-theory in the last section, we fixed a representative $\tilde{\omega}$ for this class without concerning ourselves with how it was chosen. However, some constructions also seem to have need of a ``Chern-Simons form'' relating the pullback of the universal Chern character representative under $f$ to a representative for $\textrm{ch}_\Gamma(x)$ constructed via the Chern-Weil method. While it may very well be possible to construct a global form $\tilde{\omega}$ from a universal connection via the Chern-Weil method, we have been unable to do so. It is therefore necessary for us to have recourse to finite-dimensional approximations. As we shall presently explain, there do exist universal connections on the sets in the canonical open cover, and, if $X$ is compact, then these connections may be compared with connections on $X$.

Let $\textrm{BU}(n)_\Gamma$ be the standard classifying space for equivariant principal $\textrm{U}(n)$-bundles, namely, the Grassmannian of $n$-dimensional subspaces of $\mathcal{H}_\Gamma^0$. Let $\textrm{EU}_n$ be the (contractible) space of isometric embeddings $\mathbb{C}^n \rightarrow \mathcal{H}^0_\Gamma$; then the universal $\textrm{U}(n)$-bundle is given by the fibration
\begin{displaymath}
\textrm{EU}(n)_\Gamma \longrightarrow \textrm{BU}(n)_\Gamma \qquad L \mapsto L(\mathbb{C}^n)
\end{displaymath}
Classifying spaces are unique up to homotopy equivalence, so it should be clear from the foregoing discussion that, for each $W \in \mathcal{G}_\Gamma$,
\begin{displaymath}
\mathcal{O}_W \simeq \bigsqcup_{n \in \mathbb{N}} \textrm{BU}(n)_\Gamma
\end{displaymath}
and, in particular, that
\begin{displaymath}
\mathcal{O}_W \cap \boldsymbol{\mathfrak{F}}_{\Gamma,k}^0 \simeq \textrm{BU}(\textrm{dim}\,W + k)_\Gamma
\end{displaymath}
The homotopy equivalence is given by the map
\begin{displaymath}
\varphi_W: \mathcal{O}_W \longrightarrow \bigsqcup_{n \in \mathbb{N}} \textrm{BU}(n)_\Gamma \qquad \hat{T} \mapsto T^{-1}(W)
\end{displaymath}
and the unitary frame bundle $\mathcal{P}_\textrm{U}(V^0_W) \rightarrow \mathcal{O}_W$ associated with $V^0_W$ is isomorphic to the pullback bundle
\begin{displaymath}
\varphi^*_W \left( \bigsqcup_{n \in \mathbb{N}} \textrm{EU}(n)_\Gamma \right) \longrightarrow \mathcal{O}_W
\end{displaymath}
There exists a canonical connection for the universal $\textrm{U}(n)$-bundle over $\textrm{BU}(n)_\Gamma$. Suppose $L \in \textrm{EU}(n)_\Gamma$. A convenient description of the tangent space $\textrm{T}_L\textrm{EU}(n)_\Gamma$ to $\textrm{EU}(n)_\Gamma$ at $L$ is given as follows.  A path in $\textrm{U}(\mathcal{L}_\Gamma)$ starting at the identity is mapped (via pointwise composition with $L$) to a path in $\textrm{EU}(n)_\Gamma$ starting at $L$.  This yields a linear map $\mathfrak{u}(\mathcal{H}^0_\Gamma) \rightarrow \textrm{T}_L\textrm{EU}(n)_\Gamma$.  This map is clearly surjective, and its kernel is $\mathfrak{u}(L(\mathbb{C}^n)^\perp)$.  Thus,
\begin{displaymath}
\textrm{T}_L\textrm{EU}(n)_\Gamma \cong \mathfrak{u}(\mathcal{H}^0_\Gamma)/\mathfrak{u}(L(\mathbb{C}^n)^\perp)
\end{displaymath}
We define an inner product on $\mathfrak{u}(\mathcal{H}^0_\Gamma)$ as follows: for each $A,B \in \mathfrak{u}(\mathcal{H}^0_\Gamma)$, let
\begin{displaymath}
\langle A,B \rangle = -\textrm{tr}_{L(\mathbb{C}^N)}(A^*B + B^*A)
\end{displaymath}
This inner product descends to a real, nondegenerate, positive-definite inner product on $\textrm{T}_L\textrm{EU}(n)_\Gamma$.  The metric on $\textrm{T}\textrm{EU}(n)_\Gamma$ thus obtained is invariant under $\textrm{U}(n)$-action, and the canonical connection $\Theta$ is given by orthogonal projection onto the fiber. Let
\begin{displaymath}
\hat{\omega} \in \Omega^0_\Gamma(\textrm{BU}(n)_\Gamma)
\end{displaymath}
be the representative for the Chern character constructed from this connection via the Chern-Weil method. Define
\begin{equation}
\tilde{\omega}_W \equiv \varphi_W^*\hat{\omega} - \textrm{dim}(W) \in \Omega^0_\Gamma(\mathcal{O}_W)
\end{equation}
Then $\tilde{\omega}_W$ is a representative for the restriction of the universal Chern character to $\mathcal{O}_W$, and, if $f: X \rightarrow \mathcal{O}_W$ an equivariant map representing a $K$-theory class $x$, then $f^*\tilde{\omega}_W$ is a form representing $\textrm{ch}_\Gamma(x)$.

We would like to relate these local representatives of the universal Chern character to one another on the overlaps of the canonical open sets. Suppose that $W,W' \in \mathcal{G}_\Gamma$ such that $\mathcal{O}_W \cap \mathcal{O}_{W'} \neq \varnothing$. We can use the universal connections to a construct a Chern-Simons form
\begin{displaymath}
\widetilde{\textrm{CS}}_{W,W'} \in \Omega^{-1}_\Gamma(\mathcal{O}_W \cap \mathcal{O}_{W'})
\end{displaymath}
satisfying
\begin{equation} \label{eq:glowfrog}
d_\Gamma\widetilde{\textrm{CS}}_{W,W'} = \tilde{\omega}_{W'} - \tilde{\omega}_W
\end{equation}
Suppose first that $W \subset W'$. Then $\mathcal{O}_W \subset \mathcal{O}_{W'}$. Let $\nabla_W$ be the covariant derivative for $V_W^0$ given by the universal connection $\varphi_W^*\Theta$, and $\nabla_{W'}$ the connection for $V_{W'}^0$ given by the universal connection $\varphi_{W'}^*\Theta$. Let $W''$ be the orthogonal complement to $W$ in $W'$, and notice that
\begin{displaymath}
V_{W'}^0 \simeq V_W^0 \oplus W''
\end{displaymath}
Let $\nabla_W' = \nabla_W \oplus d$, where $d$ is the trivial (flat) connection. Set
\begin{equation}
\widetilde{\textrm{CS}}_{W,W'} \equiv \textrm{CS}_\Gamma(V_{W'};\nabla_W',\nabla_{W'}) \in \Omega^{-1}_\Gamma(\mathcal{O}_W)
\end{equation}
Then this form satisfies (\ref{eq:glowfrog}). The general case is handled similarly, by considering $W$ and $W'$ as subspaces of their sum $W + W'$. Namely, we extend each of $\nabla_W$ and $\nabla_{W'}$ via flat connections to connections $\nabla_W'$ and $\nabla_{W'}'$ for $V_{W+W'}^0$, and set
\begin{equation}
\widetilde{\textrm{CS}}_{W,W'} \equiv \textrm{CS}_\Gamma(V_{W+W'};\nabla_W',\nabla_{W'}') \in \Omega^{-1}_\Gamma(\mathcal{O}_W \cap \mathcal{O}_{W'})
\end{equation}
It follows from the properties of Chern-Simons forms that, on $\mathcal{O}_W \cap \mathcal{O}_{W'}$,
\begin{equation}
\widetilde{\textrm{CS}}_{W,W'} = \widetilde{\textrm{CS}}_{W,W+W'} - \widetilde{\textrm{CS}}_{W',W+W'}
\end{equation}
up to an exact form.

Let us now generalize the foregoing constructions to nonzero degrees. Recall that we have chosen isomorphisms
\begin{displaymath}
J: \mathcal{H}_\Gamma \otimes \mathbb{C}\textrm{l}_i \longrightarrow \mathcal{H}_\Gamma \otimes \mathbb{C}\textrm{l}_{i-1}
\end{displaymath}
Let $\mathcal{G}_{\Gamma,i} \subset \mathcal{G}_\Gamma$ be the space of finite-dimensional $\Gamma$-invariant subspaces $W \subset \mathcal{H}_\Gamma^1$ which are of the form
\begin{displaymath}
J^i \left( (W^0 \otimes \mathbb{C}\textrm{l}_i^1) \oplus (W^1 \otimes \mathbb{C}\textrm{l}_i^0) \right)
\end{displaymath}
Notice that the subcover $\lbrace \mathcal{O}_W \rbrace_{W \in \mathcal{G}_{\Gamma,i}}$ of the canonical open cover for $\boldsymbol{\mathfrak{F}}_\Gamma^0$ is also an open cover, since each $W \in \mathcal{G}_\Gamma$ is a subspace of an element of $\mathcal{G}_{\Gamma,i}$. Define
\begin{equation}
\Omega^i\boldsymbol{\mathfrak{F}}_\Gamma^0 \equiv \textrm{Map}\left( (I^i,\partial I^i),(\boldsymbol{\mathfrak{F}}_\Gamma^0,\textrm{GL}(\mathcal{H}_\Gamma)) \right)
\end{equation}
and, for each $W \in \mathcal{G}_{\Gamma,i}$, define
\begin{equation}
\Omega^i\mathcal{O}_W \equiv \textrm{Map}\left( (I^i,\partial I^i),(\mathcal{O}_W,\mathcal{O}_W \cap \textrm{GL}(\mathcal{H}_\Gamma)) \right)
\end{equation}
The space $\Omega^i\boldsymbol{\mathfrak{F}}_\Gamma^0$ carries the compact open topology, and $\lbrace \Omega^i\mathcal{O}_W \rbrace_{W \in \mathcal{G}_{\Gamma,i}}$ is an open cover for $\Omega^i\boldsymbol{\mathfrak{F}}_\Gamma^0$. The map
\begin{displaymath}
\psi_i: I^i \times \boldsymbol{\mathfrak{F}}_\Gamma^{-i} \longrightarrow \boldsymbol{\mathfrak{F}}_\Gamma^0
\end{displaymath}
defined in Subection 2.1 induces a homotopy equivalence
\begin{displaymath}
\boldsymbol{\mathfrak{F}}_\Gamma^{-i} \stackrel{\sim}{\longrightarrow} \Omega^i\boldsymbol{\mathfrak{F}}_\Gamma^0
\end{displaymath}
Let $\mathcal{O}_W^{-i} \subset \boldsymbol{\mathfrak{F}}_\Gamma^{-i}$ be the preimage of $\Omega^i\mathcal{O}_W$ under this map. Then $\lbrace \mathcal{O}_W^{-i} \rbrace_{W \in \mathcal{G}_{\Gamma,i}}$ is an open cover for $\boldsymbol{\mathfrak{F}}_\Gamma^{-i}$. Notice also that
\begin{equation}
\mathcal{O}_W^{-i} \simeq \Omega^i\textrm{BU}(\textrm{dim}\,W)_\Gamma
\end{equation}

Set
\begin{equation}
\tilde{\omega}_W^{-i} \equiv \int_{I^i \times \mathcal{O}_W^{-i}/\mathcal{O}_W^{-i}} \psi_i^*\tilde{\omega}_W \in \Omega^{-i}_\Gamma(\mathcal{O}_W^{-i})
\end{equation}
Then $\tilde{\omega}_W^{-i}$ is a representative for the restriction of the universal Chern character to $\mathcal{O}_W^{-i}$, and, if $f: X \rightarrow \mathcal{O}_W^{-i}$ an equivariant map representing a $K$-theory class $x$, then $f^*\tilde{\omega}_W^{-i}$ is a form representing $\textrm{ch}_\Gamma^{-i}(x)$.

Now suppose that $W,W' \in \mathcal{G}_{\Gamma,i}$ such that $\mathcal{O}_W^{-i} \cap \mathcal{O}_{W'}^{-i} \neq \varnothing$. Define
\begin{equation}
\widetilde{\textrm{CS}}_{W,W'}^{-i} \equiv \int_{I^i \times (\mathcal{O}_W^{-i} \cap \mathcal{O}_{W'}^{-i})/(\mathcal{O}_W^{-i} \cap \mathcal{O}_{W'}^{-i})} \psi_i^*\widetilde{\textrm{CS}}_{W,W'},
\end{equation}
an element of $\Omega_\Gamma^{-i-1}\left(\mathcal{O}_W^{-i} \cap \mathcal{O}_{W'}^{-i}\right)$. Now, the image of $\partial I^i \times \mathcal{O}_W^{-i}$ under the map
\begin{displaymath}
\varphi_W \circ \psi_i: I^i \times \mathcal{O}_W^{-i} \longrightarrow \textrm{BU}(\textrm{dim}\,W)_\Gamma
\end{displaymath}
is a point. (This is because $W \in \mathcal{G}_{\Gamma,i}$, and so its image under Clifford multiplication by any nonzero linear combination $x = \sum x_j e_j$ of the generators of $\mathbb{C}\textrm{l}_i$ is independent of the choice of weights.) It follows that the restriction of the connection $\psi_i^*\nabla_W$ to the vector bundle
\begin{displaymath}
\partial\left(\psi_i^* V_W\right) \longrightarrow \partial I^i \times \mathcal{O}_W^{-i}
\end{displaymath} is trivial. The same is true for $\partial I^i \times \mathcal{O}_{W'}^{-i}$, so the Chern-Simons form $\psi_i^*\widetilde{\textrm{CS}}_{W,W'}$ vanishes when restricted to $\partial I^i \times (\mathcal{O}_W^{-i} \cap \mathcal{O}_{W'}^{-i})$. Thus,
\begin{equation}
d_\Gamma \widetilde{\textrm{CS}}_{W,W'}^{-i} = \tilde{\omega}_{W'}^{-i} - \tilde{\omega}_W^{-i}
\end{equation}

\subsection{A slightly modified definition}

The basic idea behind the modified definition is that one can define the differential equivariant $K$-theory of compact spaces by using the universal connections on the canonical open sets, rather than a global form representing the universal Chern character, provided that one includes additional data in the class representatives. The cost of doing so is, of course, a more complicated set of equivalence relations, but the construction is entirely canonical, and has the benefit of allowing one to define ``Chern-Simons forms'' as in the foregoing subsection. This is helpful for relating the main definition of differential $K$-theory to the geometric description, and it also proves useful in defining the ring structure and the pushforward map.

Fix $i \in \mathbb{N}$ for the time being. Let $X$ be a compact $\Gamma$-manifold. Consider quadruples
\begin{displaymath}
(W,f,\eta,\omega)
\end{displaymath}
where $W \in \mathcal{G}_{\Gamma,i}$ and
\begin{displaymath}
(f,\eta,\omega) \in \textrm{Map}(X,\mathcal{O}_W^{-i})_\Gamma \times \Omega^{-i-1}_\Gamma(X) \times \Omega^{-i}_\Gamma(X)_\textrm{cl}
\end{displaymath}
such that
\begin{displaymath}
d_\Gamma\eta = \omega - f^*\tilde{\omega}_W^{-i}
\end{displaymath}
Equivalence relations among quadruples are generated by the following:
\begin{enumerate}
\item If $f: X \rightarrow \mathcal{O}_W^{-i} \cap \mathcal{O}_{W'}^{-i}$, then
\begin{displaymath}
(W,f,\eta,\omega) \sim \left(W',f,\eta - f^*\widetilde{\textrm{CS}}^{-i}_{W,W'},\omega\right)
\end{displaymath}
\item If $F: I \times X \rightarrow \mathcal{O}_W^{-i}$ is a homotopy from $f$ to $f'$, then
\begin{displaymath}
(W,f,\eta,\omega) \sim \left(W,f',\eta - \int_{I \times X/X}F^*\tilde{\omega}_W^{-i},\omega\right)
\end{displaymath}
\item If $\beta \in \Omega_\Gamma^{-i-2}(X)$, then
\begin{displaymath}
(W,f,\eta,\omega) \sim (W,f,\eta + d_\Gamma\beta,\omega)
\end{displaymath} \end{enumerate}

\begin{defn} \label{defn:bigdef2} Let $M_\Gamma^{-i}(X)$ denote the set of equivalence classes of such quadruples. \end{defn}

Notice that $M_\Gamma^{-i}(X)$ fits into a short exact sequence
\begin{equation} \label{eq:shortexact4}
0 \longrightarrow \frac{H^{-i-1}_\Gamma(X;\mathbb{R})}{\textrm{ch}^{-i-1}_\Gamma K_\Gamma^{-i-1}(X)} \longrightarrow M_\Gamma^{-i}(X) \stackrel{c}{\longrightarrow} A^{-i}_\Gamma(X) \longrightarrow 0
\end{equation}
The injection is given by
\begin{displaymath}
\eta \mapsto (W,i,\eta,0)
\end{displaymath}
where $i$ is a constant map, and the surjection is given by
\begin{displaymath}
(W,f,\eta,\omega) \mapsto (x,\omega)
\end{displaymath}
where $x$ is the class obtained by pushing the class $\psi_i^*\lbrack V_W \rbrack$ forward to $\mathcal{O}_W^{-i}$ and then pulling it back to $X$ by $f$. $M_\Gamma^{-i}(X)$ also fits into the sequence
\begin{equation} \label{eq:shortexact5}
0 \longrightarrow \frac{\Omega^{-i-1}_\Gamma(X)}{\Omega_\Gamma^{-i-1}(X)_K} \longrightarrow M_\Gamma^{-i}(X) \stackrel{c}{\longrightarrow} K^{-i}_\Gamma(X) \longrightarrow 0
\end{equation}
where the injection is given by
\begin{displaymath}
\eta \mapsto (W,i,\eta,d_\Gamma \eta)
\end{displaymath}

\begin{prop} \label{prop:moddef} If $X$ is a compact $\Gamma$-manifold, then
\begin{displaymath}
\check{K}_\Gamma^{-i}(X) \cong M_\Gamma^{-i}(X)
\end{displaymath} \end{prop}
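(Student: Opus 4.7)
The plan is to build a natural map $\Phi: M_\Gamma^{-i}(X) \to \check{K}_\Gamma^{-i}(X)$ and then apply the Five Lemma to the short exact sequences (\ref{eq:shortexact}) and (\ref{eq:shortexact4}), which share identical outer terms. Because $\tilde{\omega}^{-i}|_{\mathcal{O}_W^{-i}}$ and $\tilde{\omega}_W^{-i}$ both represent the restricted universal Chern character class on $\mathcal{O}_W^{-i}$, their difference is exact; fix once and for all, for each $W \in \mathcal{G}_{\Gamma,i}$, a primitive $\alpha_W \in \Omega_\Gamma^{-i-1}(\mathcal{O}_W^{-i})$ with $d_\Gamma \alpha_W = \tilde{\omega}_W^{-i} - \tilde{\omega}^{-i}$. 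Then set $\Phi([W,f,\eta,\omega]) := [(f, \eta + f^*\alpha_W, \omega)]$; the constraint $d_\Gamma \eta = \omega - f^*\tilde{\omega}_W^{-i}$ becomes $d_\Gamma(\eta + f^*\alpha_W) = \omega - f^*\tilde{\omega}^{-i}$, so the output is a valid $\check{K}$-triple.

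The next step is to check that $\Phi$ descends to classes. Relation (3) of Definition~\ref{defn:bigdef2} is immediate. Relation (2)---a homotopy $F: I \times X \to \mathcal{O}_W^{-i}$ from $f$ to $f'$---is absorbed into the $\check{K}$-equivalence using the same $F$ together with the correction $\beta = \int_{I \times X/X} F^*\alpha_W$; the match comes down to fibrewise Stokes applied to $F^*\alpha_W$. The delicate case is relation (1): the two images differ in the $\eta$-slot by $-f^*(\alpha_{W'} - \alpha_W - \widetilde{\textrm{CS}}^{-i}_{W,W'})$, and a short computation using the defining identities for $\alpha_W$, $\alpha_{W'}$, and $\widetilde{\textrm{CS}}^{-i}_{W,W'}$ shows that this bracketed combination is $d_\Gamma$-closed on $\mathcal{O}_W^{-i}\cap\mathcal{O}_{W'}^{-i}$. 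By Remark~\ref{rem:cs}, the two resulting $\check{K}$-triples then agree provided the pullback lies in $\Omega_\Gamma^{-i-1}(X)_K$; equivalently, one needs the cohomology class of $\alpha_{W'} - \alpha_W - \widetilde{\textrm{CS}}^{-i}_{W,W'}$ on the intersection to lie in the image of the integral equivariant Chern character.

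To finish, I would verify that $\Phi$ commutes with the outer maps of (\ref{eq:shortexact}) and (\ref{eq:shortexact4}): it is the identity on $H_\Gamma^{-i-1}(X;\mathbb{R})/\textrm{ch}_\Gamma^{-i-1}K_\Gamma^{-i-1}(X)$ (the natural injections in the two sequences correspond modulo the addition of a Chern-character-representing form coming from $\alpha_W$ pulled back through a constant map) and the identity on $A_\Gamma^{-i}(X)$ (characteristic class and curvature are manifestly preserved). The Five Lemma then yields the isomorphism. The main obstacle is the cohomological input in relation (1)---showing that the closed form $\alpha_{W'} - \alpha_W - \widetilde{\textrm{CS}}^{-i}_{W,W'}$ represents a class in $\textrm{ch}_\Gamma^{-i-1}K_\Gamma^{-i-1}(\mathcal{O}_W^{-i}\cap\mathcal{O}_{W'}^{-i})$. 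This rests on the equivariant homotopy equivalence $\mathcal{O}_W^{-i} \simeq \Omega^i\textrm{BU}(\textrm{dim}\,W)_\Gamma$, together with the fact that the real equivariant cohomology of these classifying spaces is exhausted by equivariant Chern characters via Theorem~\ref{thm:localization}, and most likely requires a judicious choice of the primitives $\alpha_W$---for instance, built as Chern--Weil primitives from the universal connections on the bundles $V_W^0$---so that the integrality of the resulting cocycle becomes transparent.
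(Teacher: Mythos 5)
Your map is the paper's map (up to the sign of the primitive: the paper fixes $\sigma_W$ with $d_\Gamma\sigma_W = \tilde{\omega}^{-i} - \tilde{\omega}_W^{-i}$ and sends $(W,f,\eta,\omega) \mapsto (f,\eta - f^*\sigma_W,\omega)$), your handling of relations (2) and (3) and the Five Lemma finish also match. But the step you yourself flag as ``the main obstacle''---well-definedness under relation (1)---is left unproved, and the route you sketch is not the right one. You ask whether the closed form $\alpha_{W'} - \alpha_W - \widetilde{\textrm{CS}}^{-i}_{W,W'}$ on $\mathcal{O}_W^{-i}\cap\mathcal{O}_{W'}^{-i}$ has Chern character image class, and you anticipate needing a judicious Chern--Weil construction of the primitives to secure integrality. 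Neither is needed, and working on the intersection directly is the wrong move, since the intersection is not one of the controlled classifying spaces and there is no reason its degree $(-i-1)$ cohomology should be accessible.

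The paper's resolution is both simpler and stronger: the degree $(-i-1)$ cohomology of $\mathcal{O}_W^{-i} \simeq \Omega^i\textrm{BU}(\textrm{dim}\,W)_\Gamma$ is \emph{trivial}, so any closed form of that degree on $\mathcal{O}_W^{-i}$ is exact; in particular the primitives $\sigma_W$ are unique up to exact forms, and when $W \subset W'$ (so $\mathcal{O}_W^{-i} \subset \mathcal{O}_{W'}^{-i}$) the identity $\sigma_W - \sigma_{W'} = \widetilde{\textrm{CS}}^{-i}_{W,W'}$ holds up to an exact form on $\mathcal{O}_W^{-i}$, with no integrality question arising. The general case is then reduced to this one by the trick you are missing: compare both $W$ and $W'$ to $W + W'$, using that on the overlap $\widetilde{\textrm{CS}}^{-i}_{W,W'} = \widetilde{\textrm{CS}}^{-i}_{W,W+W'} - \widetilde{\textrm{CS}}^{-i}_{W',W+W'}$ up to an exact form, and that $\widetilde{\textrm{CS}}^{-i}_{W,W+W'} = \sigma_W - \sigma_{W+W'}$ and $\widetilde{\textrm{CS}}^{-i}_{W',W+W'} = \sigma_{W'} - \sigma_{W+W'}$ up to exact forms by the nested case. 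Exactness of the resulting difference gives equivalence of the two image triples outright (no appeal to $\Omega^{-i-1}_\Gamma(X)_K$ beyond the trivial inclusion of exact forms). Until you supply this cohomological vanishing and the $W+W'$ reduction, your argument has a genuine hole at precisely the point the modified definition was designed to handle.
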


\begin{proof} Fix a representative $\tilde{\omega}^{-i} \in \Omega^{-i}_\Gamma(\boldsymbol{\mathfrak{F}}_\Gamma^{-i})$ for the universal Chern character. Since $\tilde{\omega}_W^{-i}$ represents the restriction of the universal Chern character to $\mathcal{O}_W^{-i}$ for each finite-dimensional invariant subspace $W \subset \mathcal{H}_\Gamma^1$, we may choose
\begin{displaymath}
\sigma_W \in \Omega^{-i-1}_\Gamma(\mathcal{O}_W^{-i})
\end{displaymath}
so that
\begin{displaymath}
d_\Gamma\sigma_W = \tilde{\omega}^{-i} - \tilde{\omega}_W^{-i}
\end{displaymath}
on $\mathcal{O}_W^{-i}$. Any two such choices differ by an exact form, since the degree $(-i-1)$ cohomology of $\mathcal{O}_W^{-i}$ is trivial. The isomorphism
\begin{displaymath}
M_\Gamma^{-i}(X) \longrightarrow \check{K}^{-i}_\Gamma(X)
\end{displaymath}
is then defined on the level of representatives by
\begin{displaymath}
(W,f,\eta,\omega) \mapsto (f,\eta - f^*\sigma_W,\omega)
\end{displaymath}
Clearly this map respects equivalence relations (2) and (3). Thus, in order to show that it is well-defined on the level of classes, we must show that, if $f$ maps $X$ into $\mathcal{O}_W^{-i} \cap \mathcal{O}_{W'}^{-i}$, then
\begin{displaymath}
\sigma_W - \sigma_{W'} = \widetilde{\textrm{CS}}_{W,W'}^{-i}
\end{displaymath}
up to an exact form. This is easy to see if $W \subset W'$, since then $\mathcal{O}_W^{-i} \subset \mathcal{O}_{W'}^{-i}$, and we know that the degree $(-i-1)$ cohomology of $\mathcal{O}_W^{-i}$ is trivial. But the result follows in general by simply considering $W,W' \subset W + W'$. We have seen that, on $\mathcal{O}_W^{-i} \cap \mathcal{O}_{W'}^{-i}$,
\begin{displaymath}
\widetilde{\textrm{CS}}_{W,W'}^{-i} = \widetilde{\textrm{CS}}_{W,W+W'}^{-i} - \widetilde{\textrm{CS}}_{W',W+W'}^{-i}
\end{displaymath}
up to an exact form; on the other hand,
\begin{displaymath}
\widetilde{\textrm{CS}}_{W,W+W'}^{-i} = \sigma_W - \sigma_{W+W'}
\end{displaymath}
and
\begin{displaymath}
\widetilde{\textrm{CS}}_{W',W+W'}^{-i} = \sigma_{W'} - \sigma_{W+W'}
\end{displaymath}
as we have just argued. So the map is well-defined. The proof follows from an application of the Five Lemma to the short exact sequences (\ref{eq:shortexact}) and (\ref{eq:shortexact4}). \end{proof}

\subsection{A geometric definition}

Our second alternative definition is more geometric in flavor. This description follows Klonoff's account of ordinary differential $K$-theory and is in the spirit of Lott's geometric description of $K$-theory with coefficients in $\mathbb{R}/\mathbb{Z}$ (cf. \cite{Lo}), whereas the definition we have already given is adapted from \cite{HS} and the sketch in \cite{F2}. We restrict our consideration to degree zero.

Let $X$ be a compact $\Gamma$-manifold.  Consider triples
\begin{displaymath}
(V,\nabla,\eta)
\end{displaymath}
where $V$ is a complex $\mathbb{Z}/2\mathbb{Z}$-graded $\Gamma$-equivariant vector bundle over $X$ with $\Gamma$-invariant connection $\nabla$ that is compatibly unitary, and
\begin{displaymath}
\eta \in \frac{\Omega^{-1}_\Gamma(X)}{d_\Gamma\,\Omega^{-2}_\Gamma(X)}
\end{displaymath}
Equivalence relations on the abelian group of formal sums and differences of such triples are generated by:
\begin{enumerate}
\item If $(V,\nabla)$ and $(V',\nabla')$ are vector bundles with unitary connection such that there exists an equivariant vector bundle isomorphism $\varphi: V \rightarrow V'$, then
\begin{displaymath}
(V,\nabla,\eta) \sim \left(V',\nabla',\eta - \textrm{CS}_\Gamma(V;\nabla,\varphi^*\nabla')\right)
\end{displaymath}
\item If $(V,\nabla,\eta)$ and $(V',\nabla',\eta)$ are two triples, then
\begin{displaymath}
(V,\nabla,\eta) + (V',\nabla',\eta') \sim (V \oplus V',\nabla \oplus \nabla',\eta + \eta')
\end{displaymath}
\item If $V$ is a graded vector bundle, let $\Pi V$ denote the bundle $V$ with the opposite grading. Then
\begin{displaymath}
(V,\nabla,\eta) + (\Pi V,\nabla,-\eta) \sim 0
\end{displaymath} \end{enumerate}

\begin{defn}\label{defn:bigdef3} Let $L_\Gamma(X)$ denote the set of equivalence classes of such triples. \end{defn}

\noindent The additive group $L_\Gamma(X)$ has a natural ring structure, with multiplication given by the graded tensor product of vector bundles and the wedge product of forms.  Namely, if $(V,\nabla,\eta)$ and $(V',\nabla',\eta')$ are representatives for classes in $L_\Gamma(X)$, then the product of the classes is represented by
\begin{equation}
\left(V \otimes V',\nabla \hat{\otimes} \nabla',\omega(V;\nabla) \wedge \eta' + \eta \wedge \omega(V';\nabla') + \eta \wedge d_\Gamma\eta'\right)
\end{equation}
where
\begin{equation}
\nabla \hat{\otimes} \nabla' \equiv \nabla \otimes \textrm{id} + \textrm{id} \otimes \nabla'
\end{equation}

\begin{prop} \label{prop:geomdesc} If $X$ is a compact $\Gamma$-manifold, then
\begin{displaymath}
\check{K}_\Gamma^0(X) \cong L_\Gamma(X)
\end{displaymath}
\end{prop}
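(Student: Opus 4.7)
The natural strategy is to build mutually inverse maps between $L_\Gamma(X)$ and the modified model $M_\Gamma^0(X)$ of Definition~\ref{defn:bigdef2}, then invoke Proposition~\ref{prop:moddef}. This avoids the choice of a global representative $\tilde\omega^0$ for the universal Chern character on $\boldsymbol{\mathfrak{F}}_\Gamma^0$ and leverages the finite-dimensional machinery from Subsection 3.1.

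I would begin by defining $\Phi: L_\Gamma(X) \to M_\Gamma^0(X)$ as follows. Given a representative $(V,\nabla,\eta)$, apply the stabilization procedure described after the definition of the open cover $\{\mathcal{O}_W\}$: pick $V'$ so that $V^1 \oplus V'$ is trivial with fiber $W \in \mathcal{G}_\Gamma$, trivialize $V^0 \oplus V' \oplus \mathcal{H}_\Gamma^0$ using Kuiper's theorem, and extract an equivariant classifying map $f: X \to \mathcal{O}_W$ together with a graded equivariant isomorphism $\varphi$ from the stabilized bundle onto $f^*V_W$. Let $\nabla_W$ denote the universal connection on $V_W$ and let $\tilde\nabla$ be any extension of $\nabla$ to the stabilized bundle (the choice will not matter). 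Set
\begin{displaymath}
\Phi(V,\nabla,\eta) = \bigl(W,\,f,\,\eta - \textrm{CS}_\Gamma\bigl(\tilde\nabla,\varphi^*f^*\nabla_W\bigr),\,\omega(V;\nabla) + d_\Gamma\eta\bigr).
\end{displaymath}
The defining property of Chern--Simons forms, together with naturality of the Chern--Weil map under pullback, yields $d_\Gamma\eta' = \omega - f^*\tilde\omega_W$. In the reverse direction, define $\Psi: M_\Gamma^0(X) \to L_\Gamma(X)$ by $\Psi(W,f,\eta,\omega) = (f^*V_W,f^*\nabla_W,\eta)$; the compatibility condition in $M_\Gamma^0$ guarantees that its ``curvature'' $f^*\tilde\omega_W + d_\Gamma\eta$ equals $\omega$.

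Next I would verify that both $\Phi$ and $\Psi$ descend to classes and are mutually inverse. For $\Phi$, the three $L_\Gamma$-relations translate as follows: relation (1) (replacement of $(V,\nabla)$ by an isomorphic $(V',\nabla')$ with a Chern--Simons correction) matches the cocycle identity for Chern--Simons forms; relation (2) (direct sum) uses additivity of the Chern character and of Chern--Simons forms together with the natural direct-sum stabilization, followed by an application of equivalence (1) of $M_\Gamma^0$ to move from $W \oplus W'$ back to the separate bundles; relation (3) (opposite grading) is immediate from the supertrace in the definition of $\omega_g$. For $\Psi$, equivalence (1) of $M_\Gamma^0$ (change of $W$ to $W'$), once compared inside $W+W'$, becomes relation (1) of $L_\Gamma$ via the identification of $\widetilde{\textrm{CS}}^0_{W,W'}$ with the Chern--Simons form of the extended universal connections; equivalence (2) (homotopy of $f$) becomes a Chern--Simons computation along the homotopy and is absorbed into relation (1) of $L_\Gamma$; equivalence (3) is trivially absorbed by the quotient by $d_\Gamma\Omega^{-2}_\Gamma(X)$ built into $L_\Gamma$. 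The composites are then computed directly: $\Psi \circ \Phi(V,\nabla,\eta)$ differs from $(V,\nabla,\eta)$ by exactly the Chern--Simons correction prescribed by relation (1) of $L_\Gamma$ applied to $\varphi^{-1}$, while $\Phi \circ \Psi(W,f,\eta,\omega)$ reduces to $(W,f,\eta,\omega)$ upon taking the tautological stabilization with $\varphi = \textrm{id}$.

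\textbf{Main obstacle.} The principal technical difficulty is showing that $\Phi$ is independent of the stabilizing data $(V', W, f, \varphi)$. Two such sets of choices may be compared by embedding both into a common $W + W''$, producing a difference which is a sum of Chern--Simons forms for a cyclic family of bundles-with-connections; this difference is precisely of the type $\kappa$ appearing in Remark~\ref{rem:cs} and hence lies in $\Omega^{-1}_\Gamma(X)_K$, so it does not affect the resulting class in $M_\Gamma^0(X)$. Once this independence is secured, the remaining verifications that $\Phi$ and $\Psi$ respect the relations and are mutually inverse become formal Chern--Weil/Chern--Simons manipulations.
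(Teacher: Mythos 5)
Your forward map coincides with the paper's: the paper also passes through the modified model $M_\Gamma^0(X)$ of Definition~\ref{defn:bigdef2} and Proposition~\ref{prop:moddef}, sending $(V,\nabla,\eta)$ to $\left(W,f,\sigma_W(V;f,\nabla)+\eta,\omega(V;\nabla)+d_\Gamma\eta\right)$ and proving independence of the stabilizing data exactly as you propose, via equation (\ref{eq:lovely}) and the homotopy/Chern--Simons comparison of Remark~\ref{rem:cs}. Where you genuinely diverge is in how bijectivity is obtained. The paper never constructs an inverse: it first proves Lemma~\ref{lem:geomlem}, placing $L_\Gamma(X)$ in a short exact sequence with kernel $H^{-1}_\Gamma(X;\mathbb{R})/\textrm{ch}^{-1}_\Gamma K_\Gamma^{-1}(X)$ (the substantive step being the construction over $S^1\times X$ identifying $\textrm{ker}(\beta)$ with $\textrm{ch}^{-1}_\Gamma K_\Gamma^{-1}(X)$), and then applies the Five Lemma against the sequence (\ref{eq:shortexact4}). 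Your route trades that lemma for the explicit inverse $\Psi(W,f,\eta,\omega)=(f^*V_W,f^*\nabla_W,\eta)$; this buys a more concrete, self-contained argument that bypasses the mapping-torus construction, at the cost of the verifications you sketch (that $\Psi$ respects relation (1) of $M_\Gamma^0$, using the stable isomorphism $f^*V_{W'}\simeq f^*V_W\oplus(W''\oplus\Pi W'')$ together with the defining formula for $\widetilde{\textrm{CS}}_{W,W'}$, and relation (2) via parallel transport along the homotopy plus contractibility of the affine space of connections); these are all workable. One point you should tighten: $L_\Gamma(X)$ is by definition a group of \emph{formal sums} of triples, so checking $\Phi\circ\Psi=\textrm{id}$ and $\Psi\circ\Phi=\textrm{id}$ on single triples only yields an isomorphism once you also know that $\Psi$ is additive with respect to the addition on $M_\Gamma^0(X)$ (the one built in Section 4 from $s_0$ and the correction form $\lambda^0_{W,W'}$). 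This does hold---precisely because $\lambda^0_{W,W'}$ is the Chern--Simons form comparing the universal connection on $V_{A(W\oplus W')}$ with the direct sum of the universal connections, so $\Psi$ of a sum of quadruples agrees in $L_\Gamma(X)$ with the sum of the images---but it is an extra verification your sketch omits; with that addendum your argument goes through.
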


\noindent We begin with a lemma.

\begin{lem} \label{lem:geomlem} $L_\Gamma(X)$ fits into a short exact sequence
\begin{equation} \label{eq:geomshort}
0 \longrightarrow \frac{H^{-1}_\Gamma(X;\mathbb{R})}{\textrm{\emph{ch}}^{-1}_\Gamma K_\Gamma^{-1}(X)} \longrightarrow L_\Gamma(X) \stackrel{c}{\longrightarrow} A^0_\Gamma(X) \longrightarrow 0
\end{equation} \end{lem}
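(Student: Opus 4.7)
The plan is to construct the two homomorphisms of the sequence and verify exactness at each position; the only substantive obstacle will be injectivity of the leftmost map. First I would define $c\colon L_\Gamma(X) \to A^0_\Gamma(X)$ by $[(V,\nabla,\eta)] \mapsto ([V],\, \omega(V,\nabla) + d_\Gamma\eta)$ and the left map $j\colon H^{-1}_\Gamma(X;\mathbb{R})/\textrm{ch}^{-1}_\Gamma K^{-1}_\Gamma(X) \to L_\Gamma(X)$ by $[\eta] \mapsto [(0,0,\eta)]$. That $c$ descends over the relations of $L_\Gamma(X)$ is a direct check: relation (1) shifts $\omega(V,\nabla)$ and $d_\Gamma\eta$ by canceling amounts via the identity $d_\Gamma\textrm{CS}_\Gamma(V;\nabla,\varphi^*\nabla') = \omega(V',\nabla') - \omega(V,\nabla)$, while (2), (3) act additively and use $\omega(\Pi V,\nabla) = -\omega(V,\nabla)$. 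Surjectivity of $c$ is standard Chern--Weil: represent $x$ by a graded $(V,\nabla)$ with $\nabla$ compatibly unitary and pick $\eta$ with $d_\Gamma\eta = \omega - \omega(V,\nabla)$, possible since both forms represent $\textrm{ch}_\Gamma(x)$. For well-definedness of $j$, equivalence modulo exact forms is built into the triple's quotient $\Omega^{-1}_\Gamma(X)/d_\Gamma\Omega^{-2}_\Gamma(X)$, and for $\kappa \in \Omega^{-1}_\Gamma(X)_K$ I would invoke the cyclic construction of Remark~\ref{rem:cs}: given cyclically compatible data $(V_j,\varphi_j,h_j)$ with $\kappa = \sum_j \int_{I \times X/X} \omega(I \times V_j;\nabla_j)$, iterating relation (1) around the loop---where the compatibility $\varphi_j^* h_{j+1}(0) = h_j(1)$ makes the Chern-Simons term vanish at each bundle swap---produces $(V_1, h_1(0), 0) \sim (V_1, h_1(0), -\kappa)$, and relations (2), (3) then isolate and annihilate $(0, 0, \kappa)$.

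Vanishing of $c \circ j$ is immediate. For the reverse containment, suppose $(V,\nabla,\eta) \in \ker c$, so $[V] = 0$ in $K^0_\Gamma(X)$ and $d_\Gamma\eta = -\omega(V,\nabla)$. There exists an ungraded equivariant bundle $W \to X$ together with an equivariant isomorphism $\varphi\colon V^0 \oplus W \to V^1 \oplus W$. I would adjoin the trivial sum $(W,\nabla_W,0) + (\Pi W,\nabla_W,0) \sim 0$ (via (2), (3)) and apply relation (1) with the grading-preserving isomorphism $V \oplus W \oplus \Pi W \cong A \oplus \Pi A$ (with $A = V^0 \oplus W$) induced by $\textrm{id} \oplus \varphi^{-1}$, yielding a representative of the form $(A \oplus \Pi A, \nabla_A \oplus \nabla_A, \eta')$. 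Since $(A \oplus \Pi A, \nabla_A \oplus \nabla_A, 0) \sim 0$ by (2), (3), a final application of (2) reduces the triple to $(0, 0, \eta')$, with $d_\Gamma\eta' = 0$ forced by invariance of $c$.

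The main obstacle is injectivity of $j$. My plan is to construct a secondary invariant $\Psi\colon \ker c \to H^{-1}_\Gamma(X;\mathbb{R})/\textrm{ch}^{-1}_\Gamma K^{-1}_\Gamma(X)$ that left-inverts $j$. For $(V,\nabla,\eta) \in \ker c$ and an iso $\varphi\colon V^0 \oplus W \to V^1 \oplus W$ as above, set $\Psi(V,\nabla,\eta) := [\,\eta - \textrm{CS}_\Gamma(V^0 \oplus W;\, \nabla^0 \oplus \nabla_W,\, \varphi^*(\nabla^1 \oplus \nabla_W))\,]$; this form is closed because the computation $d_\Gamma\textrm{CS}_\Gamma = \omega(V^1,\nabla^1) - \omega(V^0,\nabla^0) = -\omega(V,\nabla) = d_\Gamma\eta$ makes the bracketed form $d_\Gamma$-closed. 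Independence from the choices of $W$, $\nabla_W$, and $\varphi$ reduces to showing that the difference of two such Chern-Simons forms lies in $\Omega^{-1}_\Gamma(X)_K$ modulo exact forms; this follows from a second application of Remark~\ref{rem:cs} to the holonomy data of an automorphism $\psi = \varphi^{-1}\varphi' \in \textrm{Aut}(V^0 \oplus W)$, together with stabilization arguments for varying $W$ via trivial extensions. Verifying compatibility with the equivalence relations of $L_\Gamma(X)$ is a routine check using the Chern-Simons cocycle identity. Since $\Psi(0,0,\eta) = [\eta]$ (taking $W = 0$), the composition $\Psi \circ j$ is the identity, establishing injectivity and completing the proof.
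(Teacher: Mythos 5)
Most of your argument runs parallel to the paper's: the maps $c$ and $j$ are the same, surjectivity of $c$ is the same Chern--Weil observation, and your treatment of exactness at $L_\Gamma(X)$ (stabilize $V$ by $W \oplus \Pi W$, apply relation (1) to pass to a connection of the form $\nabla_A \oplus \nabla_A$ on $A \oplus \Pi A$, then cancel by relations (2) and (3), leaving a closed form) is the paper's argument. Your route to injectivity of $j$ via the secondary invariant $\Psi$ is genuinely different from the paper, which only sketches the converse containment; it is a reasonable alternative, and its key point---that changing the trivializing isomorphism $\varphi$ alters the Chern--Simons form by an element of $\Omega^{-1}_\Gamma(X)_K$ modulo exact forms---uses Remark~\ref{rem:cs} in the direction in which it is actually stated (take $k=1$, $\varphi_1 = \psi$, and a path of connections from $\nabla'$ to $\psi^*\nabla'$), so that part is sound modulo the deferred routine checks.

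The genuine gap is in your well-definedness argument for $j$. What must be shown is that $(0,0,\kappa) \sim 0$ whenever $\lbrack \kappa \rbrack \in \textrm{ch}^{-1}_\Gamma K_\Gamma^{-1}(X)$, and you begin by taking as given ``cyclically compatible data $(V_j,\varphi_j,h_j)$ with $\kappa = \sum_j \int_{I \times X/X} \omega(I \times V_j;\nabla_j)$.'' But Remark~\ref{rem:cs} asserts only the forward implication, namely that sums of this type lie in $\Omega^{-1}_\Gamma(X)_K$; it does not assert that every closed form whose class lies in $\textrm{ch}^{-1}_\Gamma K_\Gamma^{-1}(X)$ is, up to an exact form, realized by such holonomy data. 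That realization is precisely the substantive half of the kernel computation, and it is what the paper proves: one represents the $K^{-1}_\Gamma$-class by a graded equivariant bundle $V \to S^1 \times X$ with $\int_{S^1 \times X/X}\textrm{ch}_\Gamma(V) = \lbrack \eta \rbrack$, pulls back along $I \times X \to S^1 \times X$, trivializes in the $I$-direction to obtain the translation isomorphism $\tau: W_1 \to W_0$ and the identification $\iota: W_0 \to W_1$, and verifies $\lbrack \textrm{CS}_\Gamma(W_0;\nabla_0,\iota^*\tau^*\nabla_0)\rbrack = \lbrack \eta \rbrack$; only then does the chain of relations (1)--(3) annihilate $(0,0,\eta)$. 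As written, your proposal assumes this converse rather than proving it, so $j$ has not been shown to descend to the quotient $H^{-1}_\Gamma(X;\mathbb{R})/\textrm{ch}^{-1}_\Gamma K_\Gamma^{-1}(X)$ and exactness on the left is not established. The gap is fixable by inserting exactly the paper's $S^1 \times X$ construction (or an equivalent argument producing loop-of-connections data from an arbitrary class in $K^{-1}_\Gamma(X)$), but it is a missing step, not a routine citation.
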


\begin{proof} First, we define the (surjective) homomorphism
\begin{displaymath}
\alpha: L_\Gamma(X) \longrightarrow A_\Gamma^0(X)
\end{displaymath}
by
\begin{displaymath}
(V,\nabla,\eta) \mapsto \left(\lbrack V \rbrack, \omega(V;\nabla) + d_\Gamma\eta\right)
\end{displaymath}
We now need to show that the kernel of $\alpha$ is isomorphic to
\begin{displaymath}
\frac{H^{-1}_\Gamma(X;\mathbb{R})}{\textrm{ch}^{-1}_\Gamma K_\Gamma^{-1}(X)}
\end{displaymath}
Define
\begin{displaymath}
\beta: H^{-1}_\Gamma(X;\mathbb{R}) \longrightarrow L_\Gamma(X)
\end{displaymath}
by
\begin{displaymath}
\lbrack \eta \rbrack \mapsto \lbrack(0,0,\eta)\rbrack
\end{displaymath}
We must prove that
\begin{displaymath}
\textrm{im}(\beta) = \textrm{ker}(\alpha) \qquad \textrm{and} \qquad \textrm{ker}(\beta) = \textrm{ch}^{-1}_\Gamma K_\Gamma^{-1}(X)
\end{displaymath}

We begin with the former. It should be clear that $\textrm{im}(\beta) \subset \textrm{ker}(\alpha)$.  On the other hand, suppose that $\check{x} \in \textrm{ker}(\alpha)$.  We may represent $\check{x}$ by a triple of the form
\begin{displaymath}
(V^0 \oplus \Pi V^0,\nabla^0 \oplus \nabla^1,\eta)
\end{displaymath}
Set
\begin{displaymath}
\sigma = \textrm{CS}_\Gamma(V^0 \oplus \Pi V^0; \nabla^0 \oplus \nabla^1, \nabla^0 \oplus \nabla^0)
\end{displaymath}
Then
\begin{eqnarray}
(V^0 \oplus \Pi V^0,\nabla^0 \oplus \nabla^1,\eta) & \sim & (V^0 \oplus \Pi V^0,\nabla^0 \oplus \nabla^0,\eta + \sigma) {}\nonumber\\
& \sim & (0,0,\eta + \sigma) {}
\end{eqnarray}
Notice that $\eta + \sigma$ is closed.  It follows that $\check{x} \in \textrm{im}(\beta)$.  Thus, $\textrm{ker}(\alpha) \subset \textrm{im}(\beta)$.

Now we wish to show that $\textrm{ker}(\beta) = \textrm{ch}^{-1}_\Gamma K_\Gamma^{-1}(X)$.  Suppose first that $\lbrack \eta \rbrack \in \textrm{ch}^{-1}_\Gamma K_\Gamma^{-1}(X)$.  Then there exists a graded equivariant vector bundle $V \rightarrow S^1 \times X$ satisfying
\begin{displaymath}
\int_{S^1 \times X/X}\textrm{ch}_\Gamma(V) = \lbrack \eta \rbrack
\end{displaymath}
Let $h: I \times X \rightarrow S^1 \times X$ be the identification map given by identifying $(x,0)$ with $(x,1)$ for all $x \in X$.  Define two vector bundles
\begin{displaymath}
W_i = (h|_{\lbrace i \rbrace \times X})^*V \longrightarrow X \qquad i = 0,1
\end{displaymath}
Let $\nabla_0$ be a connection for $W_0$. Choose a trivialization for $h^*V$ in the $I$-direction. Then translation along $I$ induces a vector bundle isomorphism $\tau: W_1 \rightarrow W_0$. On the other hand, the identification map induces an isomorphism $\iota: W_0 \rightarrow W_1$, and
\begin{displaymath}
\lbrack\textrm{CS}_\Gamma(W_0;\nabla_0,\iota^*\tau^*\nabla_0)\rbrack = \lbrack \eta \rbrack
\end{displaymath}
Thus,
\begin{eqnarray}
(0,0,\eta) & \sim & (W_0 \oplus \Pi W_0,\nabla_0 \oplus \nabla_0,\eta) {}\nonumber\\
& \sim & (W_1 \oplus \Pi W_0,\tau^*\nabla_0 \oplus \nabla_0,0) {}\nonumber\\
& \sim & (0,0,0) {}
\end{eqnarray}
It follows that $\lbrack \eta \rbrack \in \textrm{ker}(\beta)$.  Similarly, if $\lbrack \eta \rbrack \in \textrm{ker}(\beta)$, then we may use this fact to construct a vector bundle $V$ over $S^1 \times X$ whose Chern character integrates to $\lbrack \eta \rbrack$.
\end{proof}

As we have stated, one motivation for developing a model using finite-dimensional approximations is that it allows us to compare the universal connections on the canonical open sets with connections on a compact space $X$. Let us see how this works before proceeding to the proof of Proposition~\ref{prop:geomdesc}. Suppose that $V \rightarrow X$ is a graded equivariant vector bundle with invariant, compatibly unitary connection $\nabla^V$, and let $f: X \rightarrow \mathcal{O}_W$ be a classifying map for $\lbrack V \rbrack \in K_\Gamma^0(X)$, for some $W \in \mathcal{G}_\Gamma$. We seek to construct a ``Chern-Simons form''
\begin{displaymath}
\sigma_W\left(V;f,\nabla^V\right) \in \Omega_\Gamma^{-1}(X)
\end{displaymath}
satisfying
\begin{equation}\label{eq:glowfrog2}
d_\Gamma\sigma_W\left(V;f,\nabla^V\right) = \omega\left(V;\nabla^V\right) - f^*\tilde{\omega}_W
\end{equation}
Let $V' \rightarrow X$ be a vector bundle complementary to $V^1$ (as ungraded bundles), so that $V^1 \oplus V'$ is trivial. Assume that $\textrm{dim}\,W = \textrm{rk}\,V^1 \oplus V'$ by adding trivial vector bundles if necessary. It follows from our constructions that $V^0 \oplus V'$ is isomorphic to $f^*V^0_W$. Set
\begin{equation} \label{eq:cs}
\sigma_W\left(V;f,\nabla^V\right) \equiv \textrm{CS}_\Gamma\left(f^*V^0_W;f^*\nabla_W,\nabla^V\right)
\end{equation}
Then $\sigma_W\left(V;f,\nabla^V\right)$ satisfies (\ref{eq:glowfrog2}). Notice that, if $f(X) \subset \mathcal{O}_W \cap \mathcal{O}_{W'}$, then
\begin{equation} \label{eq:lovely}
\sigma_W\left(V;f,\nabla^V\right) = f^*\widetilde{\textrm{CS}}_{W,W'} + \sigma_{W'}\left(V;f,\nabla^V\right)
\end{equation}
up to an element of $\Omega^{-1}_\Gamma(X)_K$.

\begin{proof}[Proof of Proposition~\ref{prop:geomdesc}] Rather than prove $L_\Gamma(X) \cong \check{K}_\Gamma^0(X)$ directly, we construct an isomorphism
\begin{equation} \label{eq:spring}
L_\Gamma(X) \stackrel{\sim}{\longrightarrow} M_\Gamma^0(X)
\end{equation}
and apply Proposition~\ref{prop:moddef}.

This map is defined on the level of representatives as follows. Take any triple $(V,\nabla,\eta)$ representing a class in $L_\Gamma(X)$. Let $f$ be an equivariant map $X \rightarrow \mathcal{B}_\Gamma$ representing $\lbrack V \rbrack$. Let $W \in \mathcal{G}_\Gamma$ be an invariant subspace so that $f(X) \subset \mathcal{O}_W$. To $(V,\nabla,\eta)$, associate the quadruple
\begin{displaymath}
\left( W, f, \sigma_W(V;f,\nabla) + \eta, \omega(V;\nabla) + d_\Gamma\eta \right)
\end{displaymath}
where $\sigma_W(V;f,\nabla)$ is defined as in equation (\ref{eq:cs}). That the correspondence is independent of the choice of $W$, up to equivalence, follows from (\ref{eq:lovely}). Similarly, if $F: I \times X \rightarrow \mathcal{O}_W$ is a homotopy from $f_1$ to $f_2$, then
\begin{displaymath}
\sigma_W(V;f_0,\nabla) - \sigma_W(V;f_1,\nabla)
\end{displaymath}
differs from
\begin{displaymath}
\int_{I \times X/X}F^*\tilde{\omega}_W
\end{displaymath}
by an exact form. It follows that the correspondence is independent of the choice of $f$ as well. It thus descends to a homomorphism $L_\Gamma(X) \rightarrow M_\Gamma^0(X)$. The short exact sequences (\ref{eq:geomshort}) and (\ref{eq:shortexact4}) fit into the commutative diagram
\begin{equation}
\xymatrix{
& & L_\Gamma(X) \ar[dr] \ar[d] & & \\
0 \ar[r] & \frac{H^{-1}_\Gamma(X;\mathbb{R})}{\textrm{ch}_\Gamma^{-1}K^{-1}_\Gamma(X)} \ar[r] \ar[ur] & M_\Gamma^0(X) \ar[r] & A_\Gamma^0(X) \ar[r] & 0 }
\end{equation}
An application of the Five Lemma therefore proves that (\ref{eq:spring}) is an isomorphism.  \end{proof}


\section{Ring structure}

In this section, we seek to define a ring structure for differential equivariant $K$-theory which agrees with the ring structures for ordinary equivariant $K$-theory (given by direct sum and tensor product of vector bundles) and differential forms (addition and wedge product of forms). Our discussion parallels ideas developed independently by Gomi using a slightly different model. We use here the modified definition for differential $K$-theory, Definition~\ref{defn:bigdef2}. In a previous version of this paper, we had claimed to construct the ring structure under the original definition, and we owe our gratitude to U. Bunke for pointing out an error in our construction. At present, we do not know of a way to handle the ring structure under the original definition without some sort of ``universal connections,'' but we developed Definition~\ref{defn:bigdef2} precisely to handle such deficits. It would also seem that using Definition~\ref{defn:bigdef2} allows one to sidestep the uniqueness issues raised by Bunke and Schick in \cite{BS2}.

\subsection{Addition}

We begin with a preliminary sketch. For simplicity's sake, fix $i = 0$ for the time being. We first recall how one constructs a pairing
\begin{displaymath}
\textrm{Map}(X,\boldsymbol{\mathfrak{F}}_\Gamma^0)_\Gamma \times \textrm{Map}(X,\boldsymbol{\mathfrak{F}}_\Gamma^0)_\Gamma \rightarrow \textrm{Map}(X,\boldsymbol{\mathfrak{F}}_\Gamma^0)_\Gamma
\end{displaymath}
which descends on homotopy classes of maps to addition in ordinary equivariant $K$-theory. Suppose that $x,x' \in K^0(X)$ are classes represented by the maps $f,f' \in \textrm{Map}(X,\boldsymbol{\mathfrak{F}}_\Gamma^0)_\Gamma$, respectively. Let $F$ be the map given by
\begin{displaymath}
p \mapsto \left( \begin{array}{c c}
f(p) & 0\\
0    & f'(p)
\end{array} \right)
\end{displaymath}
One would then like to say that $x + x'$ is represented by $F$. There is a technical issue here, however: $F$ is a map into the space of odd skew-adjoint Fredholm operators on $\mathcal{H}_\Gamma \oplus \mathcal{H}_\Gamma$, not $\mathcal{H}_\Gamma$. It is necessary, therefore, to choose an isomorphism of the Hilbert spaces; one then applies Kuiper's Theorem to conclude that the sum $x + x'$ thus obtained is independent of this choice.

We shall apply a similar argument in constructing an addition map for differential equivariant $K$-theory. Namely, if $\check{x},\check{x}' \in \check{K}^0(X)$ are classes represented by the quadruples $(W,f,\eta,\omega)$ and $(W',f',\eta',\omega')$, respectively, then their sum $\check{x} + \check{x}' \in \check{K}^0(X)$ should be represented by a quadruple roughly of the form
\begin{displaymath}
(W \oplus W',F, \eta + \eta' + \lambda_{W,W'}, \omega + \omega')
\end{displaymath}
The term $\lambda_{W,W'}$ is the pullback by $F$ of a differential form, a ``universal Chern-Simons form'' on $\boldsymbol{\mathfrak{F}}_\Gamma^0 \times \boldsymbol{\mathfrak{F}}_\Gamma^0$ which compensates for the difference between $F^*\tilde{\omega}_{W \oplus W'}$ and $f^*\tilde{\omega}_W + (f')^*\tilde{\omega}_{W'}$. Kuiper's Theorem then suffices to prove that the addition map is well-defined on the level of equivalence classes.

Let us make this sketch rigorous. First fix a $\Gamma$-linear isomorphism
\begin{equation}
A: \mathcal{H}_\Gamma \oplus \mathcal{H}_\Gamma \longrightarrow \mathcal{H}_\Gamma
\end{equation}
and let $A_i$  denote the isomorphism
\begin{displaymath}
(J^i)^{-1} \circ A \circ \left( J^i \oplus J^i \right): (\mathcal{H}_\Gamma \otimes \mathbb{C}\textrm{l}_i) \oplus (\mathcal{H}_\Gamma \otimes \mathbb{C}\textrm{l}_i) \longrightarrow \mathcal{H}_\Gamma \otimes \mathbb{C}\textrm{l}_i
\end{displaymath}
Let
\begin{equation}
s_i: \boldsymbol{\mathfrak{F}}_\Gamma^{-i} \times \boldsymbol{\mathfrak{F}}_\Gamma^{-i} \longrightarrow \boldsymbol{\mathfrak{F}}_\Gamma^{-i}
\end{equation}
be the map given by
\begin{equation}
(T_1,T_2) \mapsto A_i(T_1 \oplus T_2)A^{-1}_i
\end{equation}
Let $\pi_\varepsilon$ denote projection onto the $\varepsilon^\textrm{th}$ factor, $\varepsilon = 1,2$. Take $W,W' \in \mathcal{G}_{\Gamma,i}$. Then $s_i$ restricts to a map
\begin{displaymath}
\mathcal{O}_W^{-i} \times \mathcal{O}_{W'}^{-i} \longrightarrow \mathcal{O}_{A(W \oplus W')}^{-i}
\end{displaymath}
Notice that $A$ induces an isomorphism between the vector bundles
\begin{displaymath}
(\pi_1 \times \textrm{id})^* \psi_i^* V_W \oplus (\textrm{id} \times \pi_2)^* \psi_i^* V_W \longrightarrow I^i \times \mathcal{O}_W^{-i} \times \mathcal{O}_{W'}^{-i}
\end{displaymath}
and
\begin{displaymath}
(\textrm{id} \times s_i)^* \psi_i^* V_{A(W \oplus W')} \longrightarrow I^i \times \mathcal{O}_W^{-i} \times \mathcal{O}_{W'}^{-i}
\end{displaymath}
Let $\sigma_{W,W'}^{-i}$ be the Chern-Simons form from the connection
\begin{displaymath}
(\textrm{id} \times s_i)^* \psi_i^* \nabla_{A(W \oplus W')}
\end{displaymath}
to the connection
\begin{displaymath}
(\pi_1 \times \textrm{id})^* \psi_i^* \nabla_W \oplus (\textrm{id} \times \pi_2)^* \psi_i^* \nabla_{W'}
\end{displaymath}
Define
\begin{displaymath}
\lambda^{-i}_{W,W'} \equiv \int_{I^i \times \mathcal{O}_W^{-i} \times \mathcal{O}_{W'}^{-i} / \mathcal{O}_W^{-i} \times \mathcal{O}_{W'}^{-i}} \sigma_{W,W'}^{-i}
\end{displaymath}
Both connections are trivial on the boundary $\partial I^i \times \mathcal{O}_W^{-i} \times \mathcal{O}_{W'}^{-i}$, so
\begin{displaymath}
d_\Gamma \lambda_{W,W'}^{-i} = \pi_1^*\tilde{\omega}^{-i}_W + \pi_2^*\tilde{\omega}^{-i}_{W'} - s_i^*\tilde{\omega}^{-i}_{A(W \oplus W')}
\end{displaymath}
on $\mathcal{O}_W^{-i} \times \mathcal{O}_{W'}^{-i}$.

We now have what we need to define addition on the level of representatives for differential $K$-theory classes. (For convenience, we shall occasionally write representatives as column vectors.) If $(W,f,\eta,\omega)$ and $(W',f',\eta',\omega')$ are quadruples representing classes in $\check{K}_\Gamma^{-i}(X)$, we set
\begin{equation}
(W,f,\eta,\omega) + (W',f',\eta',\omega') \equiv
\left[ \begin{array}{c}
A(W \oplus W')\\
s_i\circ(f \times f')\\
\eta + \eta' + (f \times f')^*\lambda^{-i}_{W,W'}\\
\omega + \omega' \end{array} \right]
\end{equation}
We must check that this addition map is well-defined as a map on representative triples and descends to $\check{K}_\Gamma^{-i}(X)$, and that the addition on $\check{K}_\Gamma^{-i}(X)$ thus obtained is independent of the choices we have made, commutative, and associative.

That addition map is well-defined on the level of triples follows from our definition of $\lambda^{-i}_{W,W'}$. To show that it descends to equivalence classes, we must demonstrate that it respects the generating equivalence relations. Suppose, for instance, that $(W,f,\eta,\omega)$ and $(W',f',\eta',\omega')$ are two representatives, and that $W'' \in \mathcal{G}_{\Gamma,i}$ so that $f(X) \subset \mathcal{O}_{W''}^{-i}$. According to equivalence relation (1),
\begin{displaymath}
(W,f,\eta,\omega) \sim \left(W'',f,\eta - f^*\widetilde{\textrm{CS}}_{W,W''}^{-i},\omega\right)
\end{displaymath}
Now,
\begin{displaymath}
\left[ \begin{array}{c} W\\ f\\ \eta\\ \omega\end{array} \right]
+ \left[ \begin{array}{c} W'\\ f'\\ \eta'\\ \omega'\end{array} \right]
= \left[ \begin{array}{c}
A(W \oplus W')\\
s_i\circ(f \times f')\\
\eta + \eta' + (f \times f')^*\lambda^{-i}_{W,W'}\\
\omega + \omega' \end{array} \right]
\end{displaymath}
whereas
\begin{displaymath}
\left[ \begin{array}{c} W''\\ f\\ \eta - f^*\widetilde{\textrm{CS}}_{W,W''}^{-i}\\ \omega\end{array} \right]
+ \left[ \begin{array}{c} W'\\ f'\\ \eta'\\ \omega'\end{array} \right]
= \left[ \begin{array}{c}
A(W'' \oplus W')\\
s_i\circ(f \times f')\\
\eta + \eta' - f^*\widetilde{\textrm{CS}}_{W,W''}^{-i} + (f \times f')^*\lambda^{-i}_{W'',W'}\\
\omega + \omega' \end{array} \right]
\end{displaymath}
Furthermore,
\begin{displaymath}
(f \times f')^*\lambda^{-i}_{W,W'} - (f \times f')^*s_i^*\widetilde{\textrm{CS}}_{A(W \oplus W'),A(W'' \oplus W')}^{-i}
\end{displaymath}
differs from
\begin{displaymath}
(f \times f')^*\lambda^{-i}_{W'',W'} - f^*\widetilde{\textrm{CS}}_{W,W''}^{-i}
\end{displaymath}
by an element of $\Omega_\Gamma^{-i-1}(X)_K$ (see remark~\ref{rem:cs}), so we apply equivalence relation (1) once again to see that
\begin{equation}
\left[ \begin{array}{c} W\\ f\\ \eta\\ \omega\end{array} \right]
+ \left[ \begin{array}{c} W'\\ f'\\ \eta'\\ \omega'\end{array} \right]
\sim \left[ \begin{array}{c} W''\\ f\\ \eta - f^*\widetilde{\textrm{CS}}_{W,W''}^{-i}\\ \omega\end{array} \right]
+ \left[ \begin{array}{c} W'\\ f'\\ \eta'\\ \omega'\end{array} \right]
\end{equation}
Similar arguments show that addition respects relations (2) and (3), and thus that the addition map descends to $\check{K}_\Gamma^{-i}(X)$.

To show that the definition is independent of the choice of $A$, suppose that $\overline{A}$ is another such isomorphism, and let $\overline{s}_i$ be the corresponding map constructed as before.  By Kuiper's Theorem, the space of $\Gamma$-linear isomorphisms from $\mathcal{H}_\Gamma \oplus \mathcal{H}_\Gamma$ to $\mathcal{H}_\Gamma$ is contractible, so there exists a path of isomorphisms $\lbrace A_t \rbrace_{t \in I}$ from $A$ to $\overline{A}$. Let $A_{i,t}$ denote the isomorphism
\begin{displaymath}
(J^i)^{-1} \circ A_t \circ (J^i \oplus J^i)
\end{displaymath}
Define
\begin{equation}
S_i: I \times \boldsymbol{\mathfrak{F}}_\Gamma^{-i} \times \boldsymbol{\mathfrak{F}}_\Gamma^{-i} \longrightarrow \boldsymbol{\mathfrak{F}}_\Gamma^{-i}
\end{equation}
by
\begin{equation}
(t,T_1,T_2) \mapsto A_{i,t}(T_1 \oplus T_2)A_{i,t}^{-1}
\end{equation}
Then $S_i$ is a homotopy from $s_i$ to $\overline{s}_i$.

Now suppose that $(W,f,\eta,\omega)$ and $(W',f',\eta',\omega')$ are quadruples representing classes in $\check{K}_\Gamma^{-i}(X)$. Let
\begin{displaymath}
W'' = A(W \oplus W') + \overline{A}(W \oplus W') \in \mathcal{G}_{\Gamma,i}
\end{displaymath}
Then
\begin{equation}\label{eq:quadruple1}
\left[ \begin{array}{c}
A(W \oplus W')\\
s_i\circ(f \times f')\\
\eta + \eta' + (f \times f')^*\lambda^{-i}_{W,W'}\\
\omega + \omega' \end{array} \right]
\end{equation}
is equivalent to
\begin{displaymath}
\left[ \begin{array}{c}
W''\\
s_i\circ(f \times f')\\
\eta + \eta' + (f \times f')^*\left( \lambda^{-i}_{W,W'} - s_i^*\widetilde{\textrm{CS}}^{-i}_{A(W \oplus W'),W''} \right) \\
\omega + \omega' \end{array} \right]
\end{displaymath}
Similarly,
\begin{equation}\label{eq:quadruple2}
\left[ \begin{array}{c}
\overline{A}(W \oplus W')\\
\overline{s}_i\circ(f \times f')\\
\eta + \eta' + (f \times f')^*\overline{\lambda}^{-i}_{W,W'}\\
\omega + \omega' \end{array} \right]
\end{equation}
is equivalent to
\begin{displaymath}
\left[ \begin{array}{c}
W''\\
\overline{s}_i\circ(f \times f')\\
\eta + \eta' + (f \times f')^*\left( \overline{\lambda}^{-i}_{W,W'} - \overline{s}_i^*\widetilde{\textrm{CS}}^{-i}_{\overline{A}(W \oplus W'),W''} \right) \\
\omega + \omega' \end{array} \right]
\end{displaymath}
Now,
\begin{displaymath}
F \equiv S_i \circ (f \times f')
\end{displaymath}
is a homotopy from $s_i \circ (f \times f')$ to $\overline{s}_i \circ (f \times f')$, and
\begin{displaymath}
(f \times f')^*\left( \lambda^{-i}_{W,W'} - s_i^* \widetilde{\textrm{CS}}^{-i}_{A(W \oplus W'),W''} \right) - \int_{I \times X / X} F^*\tilde{\omega}^{-i}_{W''}
\end{displaymath}
differs from
\begin{displaymath}
(f \times f')^*\left( \overline{\lambda}^{-i}_{W,W'} - \overline{s}_i\widetilde{\textrm{CS}}^{-i}_{\overline{A}(W \oplus W'),W''} \right)
\end{displaymath}
by an element of $\Omega^{-i-1}_\Gamma(X)_K$. (See remark~\ref{rem:cs}. We assume without loss of generality that the image of $I \times X$ under $F$ lies in $\mathcal{O}_{W''}^{-i}$, adding an element of $\mathcal{G}_{\Gamma,i}$ to $W''$ if necessary.) It follows that (\ref{eq:quadruple1}) is equivalent to (\ref{eq:quadruple2}). This proves independence of the choice of $A$.

The proofs of commutativity and associativity proceed along similar lines.  First, let $B \in \textrm{GL}(\mathcal{H}_\Gamma \oplus \mathcal{H}_\Gamma)$ be the operator that reverses vector components, and set
\begin{displaymath}
B_i = (J^i \oplus J^i)^{-1} \circ B \circ (J^i \oplus J^i)
\end{displaymath}
Let $\lbrace B_t \rbrace_{t \in I}$ be a path from $A$ to $A \circ B$; use this to define a path $\lbrace B_{i,t} \rbrace_{t \in I}$ from $A_i$ to $A_i \circ B_i$, as before. Define
\begin{equation}
R_i: I \times \boldsymbol{\mathfrak{F}}_\Gamma^{-i} \times \boldsymbol{\mathfrak{F}}_\Gamma^{-i} \longrightarrow \boldsymbol{\mathfrak{F}}_\Gamma^{-i}
\end{equation}
by
\begin{equation}
(t,T_1,T_2) \mapsto B_{i,t}(T_1 \oplus T_2)B_{i,t}^{-1}
\end{equation}
Let $r_i$ be the map on $\boldsymbol{\mathfrak{F}}_\Gamma^{-i} \times \boldsymbol{\mathfrak{F}}_\Gamma^{-i}$ that exchanges factors.  Then $R_i$ is an equivariant homotopy from $s_i \circ r_i$ to $s_i$.  An argument analogous to the one just given suffices to prove commutativity.

Again, to prove associativity, define isomorphisms
\begin{equation}
C_i, \overline{C}_i: \mathcal{H}_\Gamma \oplus \mathcal{H}_\Gamma \oplus \mathcal{H}_\Gamma \longrightarrow \mathcal{H}_\Gamma
\end{equation}
by
\begin{equation}
C_i = A_i \circ (A_i \oplus \textrm{id}) \qquad \overline{C}_i = A_i \circ (\textrm{id} \oplus A_i)
\end{equation}
and let $\lbrace C_{i,t} \rbrace_{t \in I}$ be a path of isomorphisms from $C_i$ to $\overline{C}_i$.  Then the map given by
\begin{displaymath}
(t,T_1,T_2,T_3) \mapsto C_{i,t}(T_1 \oplus T_2 \oplus T_3)C_{i,t}^{-1}
\end{displaymath}
is an equivariant homotopy from $s_i \circ (s_i \times \textrm{id})$ to $s_i \circ (\textrm{id} \times s_i)$.  Proceed as before.

\begin{defn} Let $X$ be a compact $\Gamma$-manifold. If $\check{x},\check{x}' \in \check{K}^{-i}(X)$ are classes represented by the triples $(W,f,\eta,\omega)$ and $(W',f',\eta',\omega')$ respectively, then $\check{x} + \check{x}' \in \check{K}^{-i}(X)$ is the class represented by the triple
\begin{equation}
\left(A(W \oplus W'),s_i \circ (f \times f'), \eta + \eta' + (f \times f')^*\lambda^{-i}_{W,W'}, \omega + \omega'\right)
\end{equation}
\end{defn}

\noindent We have shown:

\begin{prop} This addition is well-defined, independent of choices, commutative, and associative.\end{prop}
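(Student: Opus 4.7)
All four claims were essentially argued in the discussion preceding the statement; the plan is to assemble those arguments into a proof.

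First, for well-definedness on the level of representatives, one only needs to check that the sum quadruple satisfies the defining compatibility $d_\Gamma(\eta+\eta'+(f\times f')^*\lambda^{-i}_{W,W'}) = \omega+\omega' - (s_i\circ(f\times f'))^*\tilde\omega^{-i}_{A(W\oplus W')}$. This is immediate from the identity
\begin{displaymath}
d_\Gamma \lambda_{W,W'}^{-i} = \pi_1^*\tilde\omega^{-i}_W + \pi_2^*\tilde\omega^{-i}_{W'} - s_i^*\tilde\omega^{-i}_{A(W\oplus W')}
\end{displaymath}
combined with the relations $d_\Gamma\eta = \omega - f^*\tilde\omega^{-i}_W$ and $d_\Gamma\eta' = \omega' - (f')^*\tilde\omega^{-i}_{W'}$.

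Next, for descent to equivalence classes, I would verify that the sum respects each of the three generating equivalence relations of Definition~\ref{defn:bigdef2}. The case of relation (1), changing $W$ to $W''$, is exactly the calculation carried out in the paragraph leading to (\ref{eq:quadruple1}); it reduces to the fact that
\begin{displaymath}
(f\times f')^*\lambda^{-i}_{W,W'} - (f\times f')^*s_i^*\widetilde{\textrm{CS}}^{-i}_{A(W\oplus W'),A(W''\oplus W')}
\end{displaymath}
differs from $(f\times f')^*\lambda^{-i}_{W'',W'} - f^*\widetilde{\textrm{CS}}^{-i}_{W,W''}$ by an element of $\Omega^{-i-1}_\Gamma(X)_K$, which is handled by Remark~\ref{rem:cs}. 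Relation (2) (homotopy of $f$) is handled similarly, pulling back a homotopy along $s_i\circ(F\times f')$ and again invoking Remark~\ref{rem:cs} to absorb the Chern-Simons defect between universal representatives and Chern-Weil representatives. Relation (3) (adding $d_\Gamma\beta$) is trivial since addition acts affinely on the $\eta$-component.

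For independence of the choice of the isomorphism $A\colon\mathcal{H}_\Gamma\oplus\mathcal{H}_\Gamma\to\mathcal{H}_\Gamma$, the plan is exactly the argument already given in the text: Kuiper's Theorem provides a path from $A$ to any other $\overline A$, which produces a homotopy $S_i$ between $s_i$ and $\overline s_i$; I then use equivalence relation (2) applied to $F = S_i\circ(f\times f')$ (after first enlarging to a common $W''\in\mathcal{G}_{\Gamma,i}$ containing both $A(W\oplus W')$ and $\overline A(W\oplus W')$ via relation (1)) to conclude that (\ref{eq:quadruple1}) and (\ref{eq:quadruple2}) represent the same class.

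Commutativity and associativity follow the same pattern. For commutativity, $R_i$ is an equivariant homotopy from $s_i\circ r_i$ to $s_i$, and the same Remark~\ref{rem:cs} bookkeeping shows that the two sums agree up to an element of $\Omega^{-i-1}_\Gamma(X)_K$. For associativity, the path $\{C_{i,t}\}$ between $A_i\circ(A_i\oplus\textrm{id})$ and $A_i\circ(\textrm{id}\oplus A_i)$ gives an equivariant homotopy between the two triple-sum constructions, and one again invokes the homotopy/subspace equivalence relations. The main technical obstacle in all four parts is the recurring need to verify that the various Chern-Simons correction forms combine to give an element of $\Omega^{-i-1}_\Gamma(X)_K$ rather than merely a closed form; this is precisely the content of Remark~\ref{rem:cs}, and is the key input that makes the argument go through.
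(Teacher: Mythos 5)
Your proposal correctly reconstructs the paper's own argument: well-definedness on representatives via the identity for $d_\Gamma\lambda^{-i}_{W,W'}$, descent through the three generating relations with Remark~\ref{rem:cs} absorbing the Chern--Simons discrepancies, and independence of $A$, commutativity, and associativity each via a Kuiper-theorem homotopy ($S_i$, $R_i$, $C_{i,t}$ respectively) combined with relations (1) and (2). This is essentially the same line of reasoning the paper lays out in the discussion preceding the proposition.
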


\subsection{Multiplication}

The arguments here given are similar to the ones we applied in our definition of addition. First, fix a $\Gamma$-linear isomorphism
\begin{equation}
L: \mathcal{H}_\Gamma^{\otimes 2} \longrightarrow \mathcal{H}_\Gamma
\end{equation}
and let $L_{i,j}$ denote the isomorphism
\begin{displaymath}
(J^{i+j})^{-1} \circ L \circ \left( J^i \otimes J^j \right) : (\mathcal{H}_\Gamma \otimes \mathbb{C}\textrm{l}_i) \otimes (\mathcal{H}_\Gamma \otimes \mathbb{C}\textrm{l}_j) \longrightarrow \mathcal{H}_\Gamma \otimes \mathbb{C}\textrm{l}_{i+j}
\end{displaymath}
Given $T_1 \in \boldsymbol{\mathfrak{F}}_\Gamma^{-i}$ and $T_2 \in \boldsymbol{\mathfrak{F}}_\Gamma^{-j}$, set
\begin{equation}
T_1 \hat{\otimes} T_2 \equiv T_1 \otimes \textrm{id} + \textrm{id} \otimes T_2
\end{equation}
and define
\begin{equation}
m_{i,j}: \boldsymbol{\mathfrak{F}}_\Gamma^{-i} \times \boldsymbol{\mathfrak{F}}_\Gamma^{-j} \longrightarrow \boldsymbol{\mathfrak{F}}_\Gamma^{-i-j}
\end{equation}
by
\begin{equation}
(T_1,T_2) \mapsto L_{i,j}(T_1 \hat{\otimes} T_2)L^{-1}_{i,j}
\end{equation}

Take $W \in \mathcal{G}_{\Gamma,i}$ and $W' \in \mathcal{G}_{\Gamma,j}$. Then the forms
\begin{displaymath}
\pi_1^*\tilde{\omega}^{-i}_W \wedge \pi_2^*\tilde{\omega}^{-j}_{W'} \qquad \textrm{and} \qquad m_{i,j}^*\tilde{\omega}^{-i-j}_{L(W \otimes W')}
\end{displaymath}
are cohomologous on $\mathcal{O}_W^{-i} \times \mathcal{O}_{W'}^{-j}$. Let $\sigma_{W,W'}^{-i,-j}$ be the Chern-Simons form for the vector bundle
\begin{displaymath}
(\pi_1 \times \textrm{id})^* \psi_i^* V_W \otimes (\textrm{id} \times \pi_2)^* \psi_j^* V_{W'} \longrightarrow I^{i+j} \times \mathcal{O}_W^{-i} \times \mathcal{O}_{W'}^{-j}
\end{displaymath}
from the connection
\begin{displaymath}
(m_{i,j} \times \textrm{id})^* \psi_{i+j}^* \nabla_{L(W \otimes W')}
\end{displaymath}
to the connection
\begin{displaymath}
(\pi_1 \times \textrm{id})^* \psi_i^* \nabla_W \hat{\otimes} (\textrm{id} \times \pi_2)^* \psi_j^* \nabla_{W'}
\end{displaymath}
and define
\begin{displaymath}
\kappa^{-i,-j}_{W,W'} \equiv \int_{I^{i+j} \times \mathcal{O}_W^{-i} \times \mathcal{O}_{W'}^{-j}/\mathcal{O}_W^{-i} \times \mathcal{O}_{W'}^{-j}} \sigma_{W,W'}^{-i,-j} \in \Omega^{-i-j-1}_\Gamma(\mathcal{O}_W^{-i} \times \mathcal{O}_{W'}^{-j})
\end{displaymath}
Then
\begin{displaymath}
d_\Gamma\kappa^{-i,-j}_{W,W'} = \pi_1^*\tilde{\omega}^{-i}_W \wedge \pi_2^*\tilde{\omega}^{-j}_{W'} - m_{i,j}^*\tilde{\omega}^{-i-j}_{L(W \otimes W')}
\end{displaymath}

If $(W,f,\eta,\omega)$ represents a class in $\check{K}_\Gamma^{-i}(X)$ and $(W',f',\eta',\omega')$ a class in $\check{K}_\Gamma^{-j}(X)$, then we set
\begin{equation}
(f,\eta,\omega) \cdot (f',\eta',\omega') \equiv
\left[ \begin{array}{c}
L(W \otimes W')\\
m_{i,j}\circ(f \times f')\\
(-1)^i f^*\tilde{\omega}^{-i}_W \wedge \eta' + \eta \wedge \omega' + (f \times f')^*\kappa^{-i,-j}_{W,W'}\\
\omega \wedge \omega' \end{array} \right]
\end{equation}
We must check that this multiplication map is well-defined and descends to $\check{K}_\Gamma^\bullet(X)$, and that the multiplication on $\check{K}_\Gamma^\bullet(X)$ thus obtained is independent of choices, graded commutative, associative, and distributive with respect to addition.

One easily checks that multiplication descends to $\check{K}_\Gamma^\bullet(X)$; the proof is similar to that for addition. Independence of the choice of $L$ is proved by an application of Kuiper's Theorem, as before. The proof of graded commutativity is similar, but more complicated; we begin with some preliminary remarks.

The classifying space $\boldsymbol{\mathfrak{F}}_\Gamma^{-i}$ carries a natural action of the symmetric group $\textrm{Sym}(i)$ acting on $i$ letters. For each $\sigma \in \textrm{Sym}(i)$, let $H_\sigma$ be the linear map on $\mathbb{C}\textrm{l}_i$ given by
\begin{displaymath}
e_{j_1} \cdot e_{j_2} \cdots e_{j_k} \mapsto e_{\sigma(j_1)} \cdot e_{\sigma(j_2)} \cdots e_{\sigma(j_k)}
\end{displaymath}
where $1 \leq j_1 < j_2 < \cdots j_k \leq i$. Extend $H_\sigma$ linearly to $\mathcal{H}_\Gamma \otimes \mathbb{C}\textrm{l}_i$. This induces a transformation
\begin{displaymath}
h_\sigma: \boldsymbol{\mathfrak{F}}_\Gamma^{-i} \longrightarrow \boldsymbol{\mathfrak{F}}_\Gamma^{-i} \qquad T \mapsto H_\sigma^{-1} \circ T \circ H_\sigma
\end{displaymath}
Clearly,
\begin{displaymath}
h_\sigma\left(\mathcal{O}_W^{-i}\right) = \mathcal{O}_W^{-i}
\end{displaymath}
for $W \in \mathcal{G}_{\Gamma,i}$. Let $k_\sigma$ be the map on $I^i$ given by
\begin{displaymath}
(t_1,t_2,\dots,t_i) \mapsto (t_{\sigma(1)},t_{\sigma(2)},\dots,t_{\sigma(i)})
\end{displaymath}
Then
\begin{eqnarray}\label{eq:rosary}
h_\sigma^*\tilde{\omega}_W^{-i} & = & \int_{I^i \times \mathcal{O}_W^{-i}/\mathcal{O}_W^{-i}} (\textrm{id} \times h_\sigma)^*\psi_i^* \tilde{\omega}_W {}\nonumber\\
& = & \int_{I^i \times \mathcal{O}_W^{-i}/\mathcal{O}_W^{-i}} (k_\sigma \times \textrm{id})^*\psi_i^* \tilde{\omega}_W {}\nonumber\\
& = & (-1)^{|\sigma|}\int_{I^i \times \mathcal{O}_W^{-i}/\mathcal{O}_W^{-i}} \psi_i^* \tilde{\omega}_W {}\nonumber\\
& = & (-1)^{|\sigma|}\tilde{\omega}_W^{-i} {}
\end{eqnarray}

\begin{lem} \label{lem:comm2} Suppose $\check{x} \in \check{K}_\Gamma^{-i}(X)$, $i > 0$, is represented by the quadruple $(W,f,\eta,\omega)$.  Let $\sigma \in \textrm{\emph{Sym}}(i)$.  Then the quadruple
\begin{equation}
\left(W,h_\sigma \circ f, (-1)^{|\sigma|}\eta, (-1)^{|\sigma|}\omega\right)
\end{equation}
is a representative for $(-1)^{|\sigma|}\check{x}$.
\end{lem}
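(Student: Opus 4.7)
The plan is to verify compatibility of the new quadruple, reduce to the case where $\sigma$ is a single transposition, and then prove the lemma in that case by computing an additive sum in $M_\Gamma^{-i}(X)$.

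First, I check that $(W, h_\sigma \circ f, (-1)^{|\sigma|}\eta, (-1)^{|\sigma|}\omega)$ is a valid quadruple: pulling back equation (\ref{eq:rosary}) by $f$ gives $(h_\sigma \circ f)^*\tilde\omega_W^{-i} = (-1)^{|\sigma|} f^*\tilde\omega_W^{-i}$, and multiplying the defining relation $d_\Gamma \eta = \omega - f^*\tilde\omega_W^{-i}$ by $(-1)^{|\sigma|}$ produces the required compatibility. Second, I reduce to a single transposition: since $H_{\sigma\sigma'} = H_\sigma H_{\sigma'}$ we have $h_{\sigma\sigma'} = h_\sigma \circ h_{\sigma'}$, and combined with the multiplicativity of the sign the lemma iterates---if it holds for $\sigma'$, then $(W, h_{\sigma'}\circ f, (-1)^{|\sigma'|}\eta, (-1)^{|\sigma'|}\omega)$ represents $(-1)^{|\sigma'|}\check{x}$, and applying the lemma to this new representative with $\sigma$ yields the lemma for $\sigma\sigma'$. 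Since transpositions generate $\textrm{Sym}(i)$, it suffices to treat an adjacent transposition $\sigma = \tau = (j, j+1)$.

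For this case I show $\check{x} + [(W, h_\tau \circ f, -\eta, -\omega)] = 0$ in $M_\Gamma^{-i}(X)$. Using the addition from Subsection 4.1, a representative of the sum is $(A(W \oplus W),\ s_i \circ (f \times (h_\tau \circ f)),\ (f \times (h_\tau \circ f))^*\lambda^{-i}_{W,W},\ 0)$. The topological input is that $[h_\tau \circ f] = -[f]$ in $K_\Gamma^{-i}(X)$: under the homotopy equivalence $\boldsymbol{\mathfrak{F}}_\Gamma^{-i} \simeq \Omega^i \boldsymbol{\mathfrak{F}}_\Gamma^0$ coming from $\psi_i$, the map $h_\tau$ corresponds to precomposition by $k_\tau: I^i \to I^i$, which reverses orientation and therefore acts as inversion of the $H$-space structure on the iterated loop space (for $i \geq 2$). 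This yields an equivariant null-homotopy $F: I \times X \to \boldsymbol{\mathfrak{F}}_\Gamma^{-i}$ for $s_i \circ (f \times (h_\tau \circ f))$; applying equivalence relation (2) with this $F$ transforms the sum representative into an equivalent quadruple with constant map and form part $(f \times (h_\tau \circ f))^*\lambda^{-i}_{W,W} - \int_{I \times X/X} F^*\tilde\omega_{A(W \oplus W)}^{-i}$.

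The main obstacle is the final step: showing this leftover form lies in $\Omega_\Gamma^{-i-1}(X)_K$, so that by (\ref{eq:shortexact5}) the quadruple is equivalent to zero. By Remark~\ref{rem:cs}, elements of $\Omega_\Gamma^{-i-1}(X)_K$ are precisely Chern--Simons forms of collections of equivariant vector bundles with paths of compatibly unitary connections glued along a circle. The strategy is to choose $F$ compatibly with the block-diagonal structure of $s_i$ and the isomorphism $A: \mathcal{H}_\Gamma \oplus \mathcal{H}_\Gamma \to \mathcal{H}_\Gamma$ (whose existence is guaranteed by Kuiper's theorem), so that the leftover form can be identified explicitly with such a Chern--Simons form built from the canonical equivariant vector bundles over the classifying space and the natural isomorphism implementing $h_\tau$. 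This identification, which makes essential use of the equivalence of the main definition with the modified one (Proposition~\ref{prop:moddef}), is the computational heart of the argument.
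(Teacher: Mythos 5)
Your setup, the verification via (\ref{eq:rosary}), the reduction to a single transposition, and the idea of showing $\check{x} + [(W, h_\tau\circ f, -\eta, -\omega)] = 0$ by adding the two quadruples all match the paper's strategy. But the proof has a genuine gap exactly where you locate ``the computational heart'': after applying relation (2) with a null-homotopy $F$, you must show that the leftover form
$(f\times(h_\tau\circ f))^*\lambda^{-i}_{W,W} - \int_{I\times X/X}F^*\tilde{\omega}^{-i}_{A(W\oplus W)}$
is killed by the equivalence relations, and you only announce a ``strategy'' (choose $F$ compatibly with the block structure and identify the form with a Chern--Simons form as in Remark~\ref{rem:cs}) without carrying it out. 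This cannot be waved through: knowing merely that $[h_\tau\circ f]=-[f]$ (your $H$-space inversion argument) gives \emph{some} equivariant null-homotopy, which only pins the sum down modulo the kernel of the characteristic-class map, i.e.\ modulo $\Omega^{-i-1}_\Gamma(X)/\Omega^{-i-1}_\Gamma(X)_K$ --- precisely the secondary ambiguity the lemma is about. Different null-homotopies change the leftover form by elements of $\Omega_K$, so the whole content of the lemma is the construction of a \emph{specific} $F$ for which the form can be controlled, and that construction is missing. (Also, Remark~\ref{rem:cs} says such Chern--Simons sums lie in $\Omega_K$, not that every element of $\Omega_K$ arises this way, so even your intended identification is stated backwards.)

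The paper resolves this by working universally on the graph $\mathfrak{G}_\Gamma$ of $h_\sigma$ inside $\boldsymbol{\mathfrak{F}}_\Gamma^{-2}\times\boldsymbol{\mathfrak{F}}_\Gamma^{-2}$: using a Clifford-algebra extension (adjoining an odd operator built from $\gamma(e_1)-\gamma(e_2)$, giving a $\mathbb{C}\textrm{l}_3$-module structure) one produces an operator $\Phi$ squaring to $-\textrm{id}$ that graded-commutes with everything in $s_2(\mathfrak{G}_\Gamma)$, and the rotation $\cos(\tfrac{\pi}{2}t)\,s_2 + \sin(\tfrac{\pi}{2}t)\,\Phi$ is an explicit equivariant null-homotopy of $s_2|_{\mathfrak{G}_\Gamma}$, independent of $f$. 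Then the leftover form on $X$ is the pullback of a universal form on the graph, and since (\ref{eq:rosary}) gives $\pi_1^*\tilde{\omega}_W^{-2}=-\pi_2^*\tilde{\omega}_W^{-2}$ on $(\mathcal{O}_W^{-2}\times\mathcal{O}_W^{-2})\cap\mathfrak{G}_\Gamma$, that universal form is closed; triviality of the degree $-3$ cohomology of $\mathcal{O}_W^{-2}$ makes it exact, so relation (3) alone finishes the argument --- no appeal to $\Omega_K$ is needed. Supplying an argument of this kind (or some equally explicit choice of $F$ with a computable leftover form) is what your proposal still requires.
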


\begin{proof} We will show this explicitly in the case that $i = 2$ and $\sigma$ is the transposition $(1,2)$; the proof for the general case will then follow easily. Let $\mathfrak{G}_\Gamma$ denote the graph of $h_\sigma$ in $\boldsymbol{\mathfrak{F}}_\Gamma^{-2} \times \boldsymbol{\mathfrak{F}}_\Gamma^{-2}$. There is an equivariant homotopy from the restriction of $s_2$ to $\mathfrak{G}_\Gamma$ to a constant map. To see this, first consider the following general remarks.

Suppose $M$ is a graded complex $\mathbb{C}\textrm{l}_2$-module given by $\gamma: \mathbb{C}\textrm{l}_2 \rightarrow \textrm{End}(M)$. Let $M'$ be the module which is equal to $M$ as a vector space but on which $\mathbb{C}\textrm{l}_2$ acts under the algebra homomorphism given by $e_j \mapsto e_{\sigma(j)}$. We extend $M \oplus M'$ to a $\mathbb{C}\textrm{l}_3$-module in the following way: let
\begin{displaymath}
\tilde{\gamma}: \mathbb{C}\textrm{l}_3 \longrightarrow \textrm{End}(M \oplus M')
\end{displaymath}
be given by
\begin{displaymath}
\tilde{\gamma}(e_1) = \left( \begin{array}{c c}
\gamma(e_1) & 0\\
0 & \gamma(e_2) \end{array} \right)
\qquad \tilde{\gamma}(e_2) = \left( \begin{array}{c c}
\gamma(e_2) & 0\\
0 & \gamma(e_1) \end{array} \right)
\end{displaymath}
\begin{displaymath}
\tilde{\gamma}(e_3) = \frac{1}{\sqrt{2}}\left( \begin{array}{c c}
0 & \gamma(e_1) - \gamma(e_2) \\
\gamma(e_1) - \gamma(e_2) & 0 \end{array} \right)
\end{displaymath}
Then
\begin{displaymath}
\lbrace \tilde{\gamma}(e_1), \tilde{\gamma}(e_2), \tilde{\gamma}(e_3) \rbrace \subset \textrm{End}(M \oplus M')
\end{displaymath}
is a set of odd operators obeying the Clifford relations.

In just the same way, we extend
\begin{displaymath}
(\mathcal{H}_\Gamma \otimes \mathbb{C}\textrm{l}_2) \oplus (\mathcal{H}_\Gamma \otimes \mathbb{C}\textrm{l}_2)
\end{displaymath}
to a $\mathbb{C}\textrm{l}_3$-module. The operator
\begin{displaymath}
\tilde{\gamma}(e_3) \in \textrm{End}((\mathcal{H}_\Gamma \otimes \mathbb{C}\textrm{l}_2) \oplus (\mathcal{H}_\Gamma \otimes \mathbb{C}\textrm{l}_2))
\end{displaymath}
thus obtained commutes in the graded sense with every operator of the form
\begin{displaymath}
\left( \begin{array}{c c}
T & 0\\
0 & h_\sigma(T) \end{array} \right) \qquad \textrm{for } T \in \boldsymbol{\mathfrak{F}}_\Gamma^{-2}
\end{displaymath}
We thus obtain an operator $\Phi \in \boldsymbol{\mathfrak{F}}_\Gamma^{-2}$ which squares to $-\textrm{id}$ and commutes in the graded sense with every element in the image of $\mathfrak{G}_\Gamma$ under $s_2$. Define
\begin{displaymath}
F: I \times \mathfrak{G}_\Gamma\longrightarrow \boldsymbol{\mathfrak{F}}_\Gamma^{-2}
\end{displaymath}
by
\begin{equation}
\begin{array}{c}
(t,T,h_\sigma(T)) \mapsto \cos\left(\frac{\pi}{2}t\right) s_2(T,h_\sigma(T)) + \sin\left(\frac{\pi}{2}t\right) \Phi \end{array}
\end{equation}
Then $F$ is a homotopy from $s_2|_{\mathfrak{G}_\Gamma}$ to the constant map with image $\Phi$.

Now suppose that $\check{x} \in \check{K}_\Gamma^{-2}(X)$ is represented by the quadruple $(W,f,\eta,\omega)$.  Then
\begin{equation} \label{eq:charliebrown}
\left[ \begin{array}{c} W\\ f\\ \eta\\ \omega \end{array} \right]
+ \left[ \begin{array}{c} W\\ h_\sigma \circ f\\ -\eta\\ -\omega \end{array} \right]
= \left[ \begin{array}{c} A(W \oplus W)\\ s_2 \circ (f \times (h_\sigma \circ f))\\ (f \times (h_\sigma \circ f))^*\lambda^{-2}_{W,W}\\ 0 \end{array} \right]
\end{equation}
which is equivalent to
\begin{equation} \label{eq:charliebrown2}
\left[ \begin{array}{c} A(W \oplus W)\\ \Phi\\ (f \times (h_\sigma \circ f))^*\left( \lambda^{-2}_{W,W} - \int_{I \times \mathfrak{G}_\Gamma/\mathfrak{G}_\Gamma}F^*\tilde{\omega}_W^{-2} \right) \\ 0 \end{array} \right]
\end{equation}
Now,
\begin{displaymath}
d_\Gamma \lambda^{-2}_{W,W} = \pi_1^* \tilde{\omega}_W^{-2} + \pi_2^* \tilde{\omega}_W^{-2} - s_2^*\tilde{\omega}_{A(W \oplus W)}^{-2}
\end{displaymath}
 Equation (\ref{eq:rosary}) implies that $h_\sigma^* \tilde{\omega}_W^{-2} = -\tilde{\omega}_W^{-2}$. It follows that $\pi_1^* \tilde{\omega}_W^{-2} = -\pi_2^* \tilde{\omega}_W^{-2}$ on the open subset $(\mathcal{O}_W^{-2} \times \mathcal{O}_W^{-2}) \cap \mathfrak{G}_\Gamma$ of $\mathfrak{G}_\Gamma$, and thus that
\begin{equation}\label{eq:pencap}
\lambda^{-2}_{W,W} - \int_{I \times \mathfrak{G}_\Gamma/\mathfrak{G}_\Gamma}F^*\tilde{\omega}_W^{-2}
\end{equation}
is closed. The degree $-3$ cohomology of $(\mathcal{O}_W^{-2} \times \mathcal{O}_W^{-2}) \cap \mathfrak{G}_\Gamma \simeq \mathcal{O}_W^{-2}$ is trivial, so (\ref{eq:pencap}) is exact as well, and the sum (\ref{eq:charliebrown}) is zero.

An argument analogous to the foregoing proves the lemma for any transposition in $\textrm{Sym}(i)$. Since any $\sigma \in \textrm{Sym}(i)$ may be written as the product of transpositions, the general proof follows by an inductive argument. \end{proof}

Let $Q \in \textrm{GL}(\mathcal{H}_\Gamma^{\otimes2})$ be the isomorphism defined on homogeneous vectors by
\begin{displaymath}
v \otimes w \mapsto (-1)^{|v||w|}w \otimes v,
\end{displaymath}
and let $Q_{i,j}$ denote the isomorphism on $(\mathcal{H}_\Gamma \otimes \mathbb{C}\textrm{l}_i) \otimes (\mathcal{H}_\Gamma \otimes \mathbb{C}\textrm{l}_j)$ defined as
\begin{displaymath}
\left(J^i \otimes J^j\right)^{-1} \circ Q \circ (J^i \otimes J^j)
\end{displaymath}
There exists a path $\lbrace Q_t \rbrace_{t \in I}$ from $L$ to $L \circ Q$; use this to define a path $\lbrace Q_{i,j,t} \rbrace_{t \in I}$ from $L_{i,j}$ to $L_{i,j} \circ Q_{i,j}$, as before. Define
\begin{equation}
N_{i,j}: I \times \boldsymbol{\mathfrak{F}}_\Gamma^{-i} \times \boldsymbol{\mathfrak{F}}_\Gamma^{-j} \longrightarrow \boldsymbol{\mathfrak{F}}_\Gamma^{-i-j}
\end{equation}
by
\begin{equation}
(t,T_1,T_2) \mapsto Q_{i,j,t}(T_1 \hat{\otimes} T_2)Q_{i,j,t}^{-1}
\end{equation}
Then $N_{i,j}$ is a homotopy from $m_{i,j}$ to $h_\sigma \circ m_{j,i} \circ r_{i,j}$, where
\begin{displaymath}
r_{i,j}: \boldsymbol{\mathfrak{F}}^{-i} \times \boldsymbol{\mathfrak{F}}_\Gamma^{-j} \longrightarrow \boldsymbol{\mathfrak{F}}_\Gamma^{-j} \times \boldsymbol{\mathfrak{F}}_\Gamma^{-i}
\end{displaymath}
is the map that exchanges factors, and
\begin{equation}
\sigma = \left( \begin{array}{c c c c c c}
1 & \cdots & j & j + 1 & \cdots & i + j\\
i + 1 & \cdots & i + j & 1 & \cdots & i \end{array} \right) \in \textrm{Sym}(i + j)
\end{equation}
Note that the parity of $\sigma$ is $ij$.

Now suppose that $\check{x}_1 \in \check{K}_\Gamma^{-i}(X)$ and $\check{x}_2 \in \check{K}_\Gamma^{-j}(X)$ with representatives $(W_1,f_1,\eta_1,\omega_1)$ and $(W_2,f_2,\eta_2,\omega_2)$, respectively. We may assume without loss of generality that $W_1 = W_2 = W$. Then the product $\check{x}_2 \cdot \check{x}_1$ is represented by
\begin{displaymath}
\left[ \begin{array}{c}
L(W \otimes W)\\
m_{j,i} \circ (f_2 \times f_1)\\
(-1)^j f_2^*\tilde{\omega}^{-j}_W \wedge \eta_1 + \eta_2 \wedge \omega_1 + (f_2 \times f_1)^*\kappa^{-j,-i}_{W,W}\\
\omega_2 \wedge \omega_1 \end{array} \right]
\end{displaymath}
It therefore follows from Lemma~\ref{lem:comm2} that $(-1)^{ij}\check{x}_2 \cdot \check{x}_1$ is represented by
\begin{displaymath}
\left[ \begin{array}{c}
L(W \otimes W)\\
h_\sigma \circ m_{j,i} \circ (f_2 \times f_1)\\
(-1)^{ij+j} f_2^*\tilde{\omega}^{-j}_W \wedge \eta_1 + (-1)^{ij}\eta_2 \wedge \omega_1 + (-1)^{ij}(f_2 \times f_1)^*\kappa^{-j,-i}_{W,W}\\
(-1)^{ij}\omega_2 \wedge \omega_1 \end{array} \right]
\end{displaymath}
which is equal to
\begin{equation} \label{eq:zorba}
\left[ \begin{array}{c}
L(W \otimes W)\\
h_\sigma \circ m_{j,i} \circ r_{i,j} \circ (f_1 \times f_2)\\
\eta_1 \wedge f_2^*\tilde{\omega}^{-j}_W + (-1)^i \omega_1 \wedge \eta_2 + (-1)^{ij}(f_1 \times f_2)^*r_{i,j}^*\kappa^{-j,-i}_{W,W}\\
\omega_1 \wedge \omega_2 \end{array} \right]
\end{equation}
Here, we have just used the commutation relations for differential forms and the fact that $f_2 \times f_1 = r_{i,j} \circ (f_1 \times f_2)$. Next, notice that
\begin{displaymath}
\eta_1 \wedge f_2^*\tilde{\omega}^{-j}_W + (-1)^i \omega_1 \wedge \eta_2
\end{displaymath}
differs from
\begin{displaymath}
(-1)^i f_1^*\tilde{\omega}^{-i}_W \wedge \eta_2 + \eta_1 \wedge \omega_2
\end{displaymath}
by an exact form, so (\ref{eq:zorba}) is equivalent to
\begin{equation} \label{eq:zorba2}
\left[ \begin{array}{c}
L(W \otimes W)\\
h_\sigma \circ m_{j,i} \circ r_{i,j} \circ (f_1 \times f_2)\\
(-1)^i f_1^*\tilde{\omega}^{-i}_W \wedge \eta_2 + \eta_1 \wedge \omega_2 + (-1)^{ij}(f_1 \times f_2)^*r_{i,j}^*\kappa^{-j,-i}_{W,W}\\
\omega_1 \wedge \omega_2 \end{array} \right]
\end{equation}
Finally,
\begin{displaymath}
(f_1 \times f_2)^* \left( (-1)^{ij}r_{i,j}^*\kappa^{-j,-i}_{W,W} + \int_{I \times \boldsymbol{\mathfrak{F}}^{-i} \times \boldsymbol{\mathfrak{F}}_\Gamma^{-j}/\boldsymbol{\mathfrak{F}}^{-i} \times \boldsymbol{\mathfrak{F}}_\Gamma^{-j}} N_{i,j}^* \tilde{\omega}^{-i-j}_{L(W \otimes W)} \right)
\end{displaymath}
differs from
\begin{displaymath}
(f_1 \times f_2)^* \kappa^{-i,-j}_{W,W}
\end{displaymath}
by an element of $\Omega^{-i-j-1}_\Gamma(X)_K$, so (\ref{eq:zorba2}) is equivalent to
\begin{displaymath}
\left[ \begin{array}{c}
L(W \otimes W)\\
m_{i,j} \circ (f_1 \times f_2)\\
(-1)^i f_1^*\tilde{\omega}^{-i}_W \wedge \eta_2 + \eta_1 \wedge \omega_2 + (f_1 \times f_2)^*\kappa^{-i,-j}_{W,W}\\
\omega_1 \wedge \omega_2 \end{array} \right]
\end{displaymath}
which is a representative for $\check{x}_1 \cdot \check{x}_2$. It follows that
\begin{equation}
\check{x}_1 \cdot \check{x}_2 = (-1)^{ij}\,\check{x}_2 \cdot \check{x}_1
\end{equation}
This proves graded commutativity.

The arguments to show associativity and distributivity are similar to what has gone before.  First, define isomorphisms $R$ and $\overline{R}$ from $\mathcal{H}_\Gamma^{\otimes 3}$ to $\mathcal{H}_\Gamma$ by
\begin{equation}
R = L \circ (L \otimes \textrm{id}) \qquad \overline{R} = L \circ (\textrm{id} \otimes L)
\end{equation}
Let $\lbrace R_t \rbrace_{t \in I}$ be a path of isomorphisms from $R$ to $\overline{R}$. Use it to define a path of Clifford linear maps
\begin{displaymath}
R_{i,j,t}: (\mathcal{H}_\Gamma \otimes \mathbb{C}\textrm{l}_i) \otimes (\mathcal{H}_\Gamma \otimes \mathbb{C}\textrm{l}_j) \otimes (\mathcal{H}_\Gamma \otimes \mathbb{C}\textrm{l}_k) \longrightarrow \mathcal{H}_\Gamma \otimes \mathbb{C}\textrm{l}_{l}
\end{displaymath}
where $l = i+j+k$. Define
\begin{equation}
q_{i,j,k}: I \times \boldsymbol{\mathfrak{F}}_\Gamma^{-i} \times \boldsymbol{\mathfrak{F}}_\Gamma^{-j} \times \boldsymbol{\mathfrak{F}}_\Gamma^{-k} \longrightarrow \boldsymbol{\mathfrak{F}}_\Gamma^{-l}
\end{equation}
by
\begin{equation}
(t,T_1,T_2,T_3) \mapsto R_{i,j,t}(T_1 \hat{\otimes} T_2 \hat{\otimes} T_3)R_{i,j,t}^{-1}
\end{equation}
Then $q_{i,j,k}$ is a homotopy from $m_{i+j,k} \circ (m_{i,j} \times \textrm{id})$ to $m_{i,j+k} \circ (\textrm{id} \times m_{j,k})$.  Proceed as before.  This suffices to prove associativity.

Finally, to prove distributivity, let $\lbrace P_t \rbrace_{t \in I}$ be a path of isomorphisms from $\mathcal{H}_\Gamma \otimes (\mathcal{H}_\Gamma \oplus \mathcal{H}_\Gamma)$ to $\mathcal{H}_\Gamma$ with endpoints
\begin{equation}
P = A \circ (L \oplus L) \qquad \textrm{and} \qquad \overline{P} = L \circ (\textrm{id} \otimes A)
\end{equation}
and use this to construct the needed homotopy.

\begin{defn} If $\check{x} \in \check{K}_\Gamma^{-i}(X)$ and $\check{x}' \in \check{K}_\Gamma^{-j}(X)$ are classes represented by the quadruples $(W,f,\eta,\omega)$ and $(W',f',\eta',\omega')$, respectively, then $\check{x} \cdot \check{x}' \in \check{K}_\Gamma^{-i-j}(X)$ is the class represented by
\begin{displaymath}
\left[ \begin{array}{c}
L(W \otimes W')\\
m_{i,j}\circ(f \times f')\\
(-1)^i f^*\tilde{\omega}^{-i}_W \wedge \eta' + \eta \wedge \omega' + (f \times f')^*\kappa^{-i,-j}_{W,W'}\\
\omega \wedge \omega' \end{array} \right]
\end{displaymath}
\end{defn}

\noindent We have shown:

\begin{prop} This multiplication is well-defined, independent of choices, graded commutative, associative, and distributive with respect to addition. \end{prop}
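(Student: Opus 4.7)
The plan is to verify the five claims in turn, following the template already established for the addition map. Well-definedness on the level of quadruples is immediate from the defining equation $d_\Gamma\kappa^{-i,-j}_{W,W'} = \pi_1^*\tilde\omega^{-i}_W \wedge \pi_2^*\tilde\omega^{-j}_{W'} - m_{i,j}^*\tilde\omega^{-i-j}_{L(W\otimes W')}$ together with the Leibniz rule applied to the $\eta$-entry. To show that multiplication descends to $\check{K}_\Gamma^\bullet(X)$, I would check the three generating equivalence relations from Definition~\ref{defn:bigdef2}: relation (3) is trivial, and for relations (1) and (2) the computation is the exact analogue of the one carried out for addition, with $\lambda^{-i}_{W,W'}$ replaced by $\kappa^{-i,-j}_{W,W'}$ and the discrepancy forms being elements of $\Omega^{-i-j-1}_\Gamma(X)_K$, so that Remark~\ref{rem:cs} applies.

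Independence of the choice of $L$ follows from Kuiper's Theorem: the space of $\Gamma$-linear isomorphisms $\mathcal{H}_\Gamma^{\otimes 2}\to\mathcal{H}_\Gamma$ is contractible, so for any second choice $\overline{L}$ one can pick a path $\{L_t\}$, define a homotopy $M_{i,j}: I\times\boldsymbol{\mathfrak{F}}_\Gamma^{-i}\times\boldsymbol{\mathfrak{F}}_\Gamma^{-j}\to\boldsymbol{\mathfrak{F}}_\Gamma^{-i-j}$ between $m_{i,j}$ and its $\overline{L}$-version, enlarge both $L(W\otimes W')$ and $\overline{L}(W\otimes W')$ to a common $W''\in\mathcal{G}_{\Gamma,i+j}$ via relation (1), and observe that the resulting discrepancy again lies in $\Omega^{-i-j-1}_\Gamma(X)_K$ by Remark~\ref{rem:cs}. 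Graded commutativity has essentially been established already: Lemma~\ref{lem:comm2} identifies the action of $h_\sigma$ with multiplication by $(-1)^{|\sigma|}$ on representatives, and combining this with the homotopy $N_{i,j}$ between $m_{i,j}$ and $h_\sigma\circ m_{j,i}\circ r_{i,j}$ produces the sign $(-1)^{ij}$, as carried out in the discussion (equations (\ref{eq:zorba}) through (\ref{eq:zorba2})).

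For associativity, I use the homotopy $q_{i,j,k}$ constructed from the path $\{R_t\}$ joining $L\circ(L\otimes\text{id})$ to $L\circ(\text{id}\otimes L)$. Applying the triple-product formula in two different bracketings yields two quadruples in $\check{K}_\Gamma^{-i-j-k}(X)$ whose underlying Fredholm maps are the endpoints of $q_{i,j,k}\circ(f_1\times f_2\times f_3)$; enlarging the subspaces $R(W_1\otimes W_2\otimes W_3)$ and $\overline{R}(W_1\otimes W_2\otimes W_3)$ to a common $W''\in\mathcal{G}_{\Gamma,i+j+k}$ and invoking equivalence relation (2) together with Remark~\ref{rem:cs} shows the two are equal in $\check{K}_\Gamma^{-i-j-k}(X)$. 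Distributivity is handled in the same manner using the path $\{P_t\}$ from $A\circ(L\oplus L)$ to $L\circ(\text{id}\otimes A)$: one expands $\check{x}\cdot(\check{x}'+\check{x}'')$ and $\check{x}\cdot\check{x}' + \check{x}\cdot\check{x}''$ using the multiplication and addition formulas, and identifies the resulting quadruples via the induced homotopy, again with the discrepancy between their $\eta$-entries living in $\Omega_\Gamma^{-i-j-1}(X)_K$.

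The main obstacle throughout is bookkeeping: in each case one must verify that the various Chern-Simons terms arising from the comparisons differ by elements of $\Omega_\Gamma^{\bullet}(X)_K$ rather than by arbitrary closed forms. The key mechanism that makes this work is Remark~\ref{rem:cs}, which identifies integrals over $I\times X$ (or $S^1\times X$) of Chern character forms of equivariant vector bundles built from the canonical bundles $V_W\to\mathcal{O}_W^{-i}$ as elements of $\Omega_\Gamma^\bullet(X)_K$; the technical content of the proof is essentially the repeated invocation of this fact in the four comparisons above.
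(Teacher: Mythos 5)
Your proposal follows essentially the same approach as the paper: well-definedness from the defining identity for $\kappa^{-i,-j}_{W,W'}$, independence of $L$ by Kuiper's theorem, graded commutativity via Lemma~\ref{lem:comm2} and the homotopy $N_{i,j}$ (using that $\sigma$ has parity $ij$), associativity via $q_{i,j,k}$, and distributivity via $\{P_t\}$, with Remark~\ref{rem:cs} invoked throughout to control the discrepancy forms. This matches the paper's argument in both structure and detail.
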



\section{Pushforward}

The main goal of Section 5 is to construct the pushforward map. The first two subsections are devoted to preliminary material. We then begin by defining the pushforward from the differential equivariant $K$-theory of $Y \times W$, where $W$ is a spin $\Gamma$-representation, to that of $Y$. In the fourth subsection, we give the definition of the pushforward map
\begin{displaymath}
\check{K}^{-i}_\Gamma(X) \longrightarrow \check{K}^{-i-n}_\Gamma(Y)
\end{displaymath}
where $X \rightarrow Y$ is a compact equivariant fiber bundle with $n$-dimensional fibers such that the relative tangent bundle has a spin structure preserved by $\Gamma$-action. We establish some of the properties of the pushforward map in the fifth subsection, and, in particular, we there prove a version of Stokes' Theorem.

An analytic formula for the pushforward from the differential equivariant $K$-theory of an odd-dimensional spin $\Gamma$-manifold to the torus $\check{K}^\textrm{odd}_\Gamma(\textrm{pt})$ is conjectured in Section 6. This formula is analogous to the formula given by the Index Theorem for even-dimensional spaces. It has been established in certain special cases.

\subsection{Differential $K$-theory with compact support}

We define differential $K$-theory with compact support only for real vector bundles over compact manifolds. (Here, we regard a vector space as a vector bundle over a point.) We find it most convenient to give our definition in terms of rapidly-decreasing forms rather than compactly-supported forms, since the superconnection machinery developed by Mathai and Quillen in \cite{MQ} plays a major role in our formulation of the pushforward map.

Suppose that $E \rightarrow X$ is a real equivariant vector bundle over a compact $\Gamma$-manifold $X$.  Let
\begin{displaymath}
\Omega_{\Gamma,\textrm{rd}}^{-i}(E) \subset \Omega^{-i}_\Gamma(E)
\end{displaymath}
consist of those forms that are rapidly-decreasing in the fiber-wise direction (cf. \cite{MQ}).  One can relate rapidly-decreasing forms to forms with compact vertical support in the following way.  Fix a $\Gamma$-invariant metric on E, and let $B(E) \rightarrow X$ denote the open unit ball bundle.  Define
\begin{displaymath} \begin{array}{c}
\Psi: B(E) \longrightarrow E \qquad \textrm{by} \qquad e \mapsto \frac{e}{\sqrt{1-|e|^2}} \end{array}
\end{displaymath}
Then pullback via $\Psi$ defines an isomorphism
\begin{displaymath}
\Omega_{\Gamma,\textrm{rd}}^\bullet(E) \longrightarrow \Omega^\bullet_\Gamma(E,E\setminus B(E)) \hookrightarrow \Omega^\bullet_{\Gamma,\textrm{c}}(E)
\end{displaymath}
On the other hand, there is an obvious inclusion map
\begin{displaymath}
\Omega_{\Gamma,\textrm{c}}^\bullet(E) \hookrightarrow \Omega_{\Gamma,\textrm{rd}}^\bullet(E)
\end{displaymath}
The composition of these two maps in either order descends to the identity map on cohomology.

Next, let $D(E)$ be the closure of $B(E)$, and let
\begin{displaymath}
\textrm{Map}_0(D(E),\boldsymbol{\mathfrak{F}}_\Gamma^{-i})_\Gamma \subset \textrm{Map}(D(E),\boldsymbol{\mathfrak{F}}_\Gamma^{-i})_\Gamma
\end{displaymath}
be the space of maps such that for each $x \in X$, the restriction to $\partial D(E)_x$ is a map into the contractible space of invertible operators.  Define
\begin{displaymath}
\textrm{Map}_0(D(E),\boldsymbol{\mathfrak{F}}_\Gamma^{-i})_\Gamma \longrightarrow \textrm{Map}(E, \boldsymbol{\mathfrak{F}}_\Gamma^{-i})_\Gamma
\end{displaymath}
by
\begin{displaymath}
f \mapsto f \circ \Psi_g^{-1}
\end{displaymath}
The range of this map is independent of the metric; let it be denoted by $\textrm{Map}_\textrm{rd}(E,\boldsymbol{\mathfrak{F}}_\Gamma^{-i})_\Gamma$.

\begin{defn} \label{theorem:cvdef} Suppose $E \rightarrow X$ is a real equivariant vector bundle over a compact $\Gamma$-manifold $X$.  The differential equivariant $K$-theory of $E$ with compact support is defined as follows: $\check{K}_{\Gamma,\textrm{c}}^{-i}(E)$ is the set of equivalence classes whose representatives are triples
\begin{displaymath}
(f,\eta,\omega) \in \textrm{Map}_\textrm{rd}(E,\boldsymbol{\mathfrak{F}}_\Gamma^i)_\Gamma \times \Omega^{-i-1}_{\Gamma,\textrm{rd}}(E) \times \Omega^{-i}_{\Gamma,\textrm{rd}}(E)_\textrm{cl}
\end{displaymath}
satisfying
\begin{displaymath}
d_\Gamma\eta = \omega - f^*\tilde{\omega}^{-i}
\end{displaymath}
Equivalence relations are as in Definition~\ref{defn:bigdef}. Alternatively, we may define $\check{K}_{\Gamma,\textrm{c}}^0(E)$ as the set of equivalence classes $(W,f,\eta,\omega)$, where $W \in \mathcal{G}_\Gamma$ and
\begin{displaymath}
(f,\eta,\omega) \in \textrm{Map}_\textrm{rd}(E,\mathcal{O}_W)_\Gamma \times \Omega^{-1}_{\Gamma,\textrm{rd}}(E) \times \Omega^0_{\Gamma,\textrm{rd}}(E)_\textrm{cl}
\end{displaymath}
satisfying
\begin{displaymath}
d_\Gamma\eta = \omega - f^*\tilde{\omega}_W
\end{displaymath}
Equivalence relation are as in  Definition~\ref{defn:bigdef2}. \end{defn}

The differential equivariant $K$-theory groups with compact support lie in short exact sequences as before, i.e.,
\begin{equation} \label{eq:cshortexact}
0 \longrightarrow \frac{H^{-i-1}_{\Gamma,\textrm{c}}(E;\mathbb{R})}{\textrm{ch}_\Gamma^{-i-1}K_{\Gamma,\textrm{c}}^{-i-1}(E)} \longrightarrow \check{K}^{-i}_{\Gamma,\textrm{c}}(E) \stackrel{c}{\longrightarrow} A^{-i}_{\Gamma,\textrm{rd}}(E) \longrightarrow 0
\end{equation}
and
\begin{equation} \label{eq:cshortexact2}
0 \longrightarrow \frac{\Omega^{-i-1}_{\Gamma,\textrm{rd}}(E)}{\Omega^{-i-1}_{\Gamma,\textrm{rd}}(E)_K} \longrightarrow \check{K}^{-i}_{\Gamma,\textrm{c}}(E) \longrightarrow K^{-i}_{\Gamma,\textrm{c}}(E) \longrightarrow 0
\end{equation}
where
\begin{displaymath}
A^{-i}_{\Gamma,\textrm{rd}}(E) \subset K_{\Gamma,\textrm{c}}^{-i}(E) \times \Omega_{\Gamma,\textrm{rd}}^{-i}(E)
\end{displaymath}
is the set of pairs $(x,\omega)$ satisfying $\textrm{ch}^{-i}_\Gamma(x) = \lbrack \omega \rbrack$.

The ring structure for $\check{K}_{\Gamma,\textrm{c}}^\bullet(E)$ is analogous to the ring structure for $\check{K}_\Gamma^\bullet(X)$.  Notice that $\check{K}_{\Gamma,\textrm{c}}^\bullet(E)$ is a $\check{K}_\Gamma^\bullet(X)$-module, where a class $\check{x} \in \check{K}_\Gamma^\bullet(X)$ acts as $\check{y} \mapsto \pi^*\check{x} \cdot \check{y}$.

\subsection{Representatives for characteristic classes}

Let $E \rightarrow X$ be a real oriented equivariant vector bundle such that $\Gamma$ preserves the orientation. If we choose a $\Gamma$-invariant metric for $E$, then $\Gamma$ acts by orientation-preserving isometries, so $\Gamma$-action lifts to the oriented orthonormal frame bundle $\mathcal{P}_\textrm{SO}(E) \rightarrow X$.

\begin{defn} We say that an oriented real $\Gamma$-equivariant vector bundle $E \rightarrow X$ is a spin $\Gamma$-equivariant vector bundle if it is spin and if $\Gamma$ preserves the spin structure, i.e., there exists a lift of $\Gamma$-action to the principal bundle $\mathcal{P}_\textrm{Spin}(E) \rightarrow X$ such that the quotient map $\mathcal{P}_\textrm{Spin}(E) \rightarrow \mathcal{P}_\textrm{SO}(E)$ is $\Gamma$-equivariant. If $E$ is the tangent bundle to $X$, then we say simply that $X$ is a spin $\Gamma$-manifold.  The choice of a $\Gamma$-equivariant spin structure for $E$ amounts to the choice of spin structure together with a lift of the $\Gamma$-action to $\mathcal{P}_\textrm{Spin}(E)$. \end{defn}

Let $E \rightarrow X$ be a real spin $\Gamma$-equivariant vector bundle, and choose a $\Gamma$-equivariant spin structure. Fix a $\Gamma$-invariant metric for $E$ and a compatible $\Gamma$-invariant connection $\nabla^E$.

Fix $g \in \Gamma$ for the time being. This group element acts on the fibers of the restriction $E_g$ of $E$ to $X^g$, and the fixed point set $E^g \subset E$ of the action of $g$ on $E$ is the subbundle of $E_g$ whose fibers are the vector subspaces on which $g$ acts trivially.  On the other hand, the orthogonal complement $E^{g\perp}$ to $E^g$ decomposes into a direct sum of $2$-plane bundles on which $g$ acts by rotation through an angle, and a bundle on which $g$ acts as multiplication by $-1$.  Since $g$ acts by orientation-preserving isomorphisms, the latter bundle must be of even rank as well.  It follows that $E^{g\perp}$ is of even rank. Set $k_g = \frac{1}{2}\textrm{rk}\,E^{g\perp}$, conceived of as a locally constant integer-valued function on $X^g$.

The lift of $g$-action to $\mathcal{P}_\textrm{Spin}(E_g)$ yields an orientation for $E^{g\perp}$ (cf. the argument in \cite[$\S$6.4]{BGV}). If $E^{g\perp}$ already has a natural orientation (e.g., if $X^g$ is a point), then the two orientations may not agree; let $\varepsilon_g$ be the disparity, conceived of as a locally constant function on $X^g$ taking values in $\lbrace \pm1 \rbrace$.

We define the \emph{equivariant $\hat{\textrm{\emph{A}}}$-form} by
\begin{equation}
\hat{\alpha}_g(E;\nabla^E) \equiv \varepsilon_g\,i^{-k_g} \frac{\hat{\alpha}(E^g;\nabla^{E^g})}{\textrm{Pf}\left( \textrm{id} - g \cdot \textrm{exp}\left\{ \frac{i}{2\pi} \Omega^{E^{g\perp}}\,u^{-1} \right\} \right)}
\end{equation}
\begin{equation}
\hat{\alpha}(E;\nabla^E) \equiv \sum \hat{\alpha}_g(E;\nabla^E) \in \Omega^0_\Gamma(X)
\end{equation}
Here, $\nabla^{E^g}$ denotes the connection $\nabla$ restricted to $E^g$, and the form $\hat{\alpha}(E^g;\nabla^g)$ is the representative for the $\hat{\textrm{A}}$-polynomial for $E^g$ obtained via the Chern-Weil method. The curvature restricted to $E^{g\perp}$ is denoted by $\Omega^{E^{g\perp}}$, and $\textrm{Pf}$ denotes the Pfaffian.

The equivariant $\hat{\textrm{A}}$-form is a universal polynomial in characteristic forms with coefficients depending rationally on the eigenvalues of the group action. It is multiplicative in the sense that
\begin{equation}
\hat{\alpha}\left(E_1 \oplus E_2;\nabla^{E_1} \oplus \nabla^{E_2}\right) = \hat{\alpha}\left(E_1;\nabla^{E_1}\right) \wedge \hat{\alpha}\left(E_2;\nabla^{E_2}\right)
\end{equation}
It is independent of the metric and connection up to exactness; let
\begin{equation}
\hat{\textrm{\textbf{A}}}_\Gamma(E) \equiv \sum \hat{\textrm{\textbf{A}}}_g(E) \in H^0_\Gamma(X;\mathbb{R})
\end{equation}
be its cohomology class. We shall refer to this class as the \emph{equivariant $\hat{\textrm{\emph{A}}}$-polynomial}.

The superconnection machinery of \cite{MQ} gives a way of construction a rapidly-decreasing differential form representative for the cohomological Thom class, which we now review. Take $E$, $\nabla^E$ as before, and suppose that $E$ is of even rank $2N$. Form the associated spinor bundle
\begin{displaymath}
S(E) = S^+(E) \oplus S^-(E) \longrightarrow X
\end{displaymath}
Each of $S^\pm(E)$ is an equivariant vector bundle. Let $\pi$ be the projection map $E \rightarrow X$, and let
\begin{displaymath}
\mu: \pi^*S^\pm(E) \longrightarrow \pi^*S^\mp(E)
\end{displaymath}
be the Clifford multiplication map given by
\begin{displaymath}
(e,s) \mapsto (e,e \cdot s)
\end{displaymath}
Note that $\mu$ is an isomorphism on the complement of the zero section of $E$. It thus defines a class
\begin{displaymath}
s(E) \in K_{\Gamma,\textrm{c}}^{2N}(E) \cong K_{\Gamma,\textrm{c}}^0(E)
\end{displaymath}
This is the equivariant $K$-theoretic Thom class for $E \rightarrow X$; the Thom isomorphism
\begin{displaymath}
K^0_\Gamma(X) \stackrel{\sim}{\longrightarrow} K^{2N}_{\Gamma,\textrm{c}}(E)
\end{displaymath}
is given by
\begin{displaymath}
x \mapsto s(E) \cdot \pi^*x
\end{displaymath}
The connection $\nabla^E$ induces a connection $\nabla^{S(E)}$ for $S(E)$, hence a connection $\nabla^{\pi^*S(E)}$ for $\pi^*S(E)$. In the language of \cite{MQ}, the operator
\begin{equation}
\boldsymbol{\nabla}^{\pi^*S(E)} \equiv \pi^*\nabla^{S(E)} + \mu
\end{equation}
is a superconnection for $\pi^*S(E)$. We define the rapidly-decreasing differential form
\begin{equation}
\omega\left(\pi^*S(E);\boldsymbol{\nabla}^{\pi^*S(E)}\right) \in \Omega^0_{\Gamma,\textrm{rd}}(E)
\end{equation}
by
\begin{equation}\label{eq:chernthom}
\omega_g\left(\pi^*{S(E)};\boldsymbol{\nabla}^{\pi^*S(E)}\right) \equiv \textrm{tr}_\textrm{s}\left( g \cdot \textrm{exp}\left\{ \frac{i}{2\pi}\boldsymbol{\Omega}^{\pi^*S(E)}_g u^{-1}\right\} \right)
\end{equation}
This form is a representative for the equivariant Chern character
\begin{displaymath}
\textrm{ch}_\Gamma(s(E)) \in H^0_{\Gamma,\textrm{c}}(E;\mathbb{R})
\end{displaymath}
of $s(E)$. We claim that
\begin{equation}\label{eq:ahatthom}
\upsilon(E;\nabla^E) \equiv \omega\left(\pi^*S(E);\boldsymbol{\nabla}^{\pi^*S(E)}\right) \wedge \pi^* \hat{\alpha}\left(E;\nabla^E\right) \in \Omega^0_{\Gamma,\textrm{rd}}(E)
\end{equation}
is a form representing the cohomological Thom class for $E \rightarrow X$, which is to say that, for each $g \in \Gamma$, the term
\begin{displaymath}
\upsilon_g(E;\nabla^E) \in \Omega_\textrm{rd}(E^g;\pi K_\mathbb{C})^0
\end{displaymath}
is a rapidly-decreasing form representing the cohomological Thom class for the bundle $E^g \rightarrow X^g$. This is a local property in $X^g$, so, to prove it, we may assume that $E^g$ and $E^{g\perp}$ are spin. In that case, our connections for $E^g$, $E^{g\perp}$ induce connections $\nabla^{S(E^g)}$, $\nabla^{S(E^{g\perp})}$ for the respective spinor bundles $S(E^g)$, $S(E^{g\perp})$, and we pull these back to $E^g$, as before. We have
\begin{displaymath}
S(E)_g \simeq S(E^g) \otimes S(E^{g\perp})
\end{displaymath}
The restriction of $\boldsymbol{\nabla}^{\pi^*S(E)}$ to $E^g$ may be written as
\begin{displaymath}
(\nabla^{\pi^*S(E^g)} + \mu) \otimes \textrm{id} + \textrm{id} \otimes \nabla^{\pi^*S(E^{g\perp})}
\end{displaymath}
and (\ref{eq:chernthom}) is of the form
\begin{equation}
\omega\left(\pi^*S(E^g);\nabla^{\pi^*S(E^g)}+\mu\right) \wedge \pi^*\omega_g\left(S(E^{g\perp});\nabla^{S(E^{g\perp})} + \mu\right)
\end{equation}
As is shown in \cite[$\S6.4$]{BGV},
\begin{equation}
\omega_g\left(S(E^{g\perp});\nabla^{S(E^{g\perp})}\right) = \varepsilon_g \, i^{k_g} \, \textrm{Pf}\left( \textrm{id} - g \cdot \textrm{exp}\left\{ \frac{i}{2\pi} \Omega^{E^{g\perp}}\,u^{-1} \right\} \right)
\end{equation}
while it follows from \cite{MQ} that
\begin{equation}
\omega\left(\pi^*S(E^g);\nabla^{\pi^*S(E^g)} + \mu\right) = \upsilon(E^g;\nabla^{E^g}) \wedge \pi^* \hat{\alpha}\left(E^g;\nabla^{E^g}\right)^{-1}
\end{equation}
where $\upsilon(E^g;\nabla^{E^g})$ is a rapidly-decreasing Thom form for $E^g \rightarrow X^g$. This proves the claim. We thus have
\begin{equation}
\int_{E^g/X^g}\omega_g\left(\pi^*S(E);\boldsymbol{\nabla}^{\pi^*S(E)}\right) = \hat{\alpha}_g(E;\nabla^E)^{-1} \cdot u^{k_g-N}
\end{equation}
and, on the level of cohomology classes,
\begin{equation}
\int_{E^g/X^g}\textrm{ch}_g(s(E)) = \hat{\textrm{\textbf{A}}}_g(E)^{-1} \cdot u^{k_g-N}
\end{equation}

\subsection{Integration along a $\Gamma$-representation}

Given an equivariant fiber bundle $X \rightarrow Y$, the topological pushforward of an equivariant $K$-theory class on $X$ is obtained by embedding $X$ in $Y \times W$, where $W$ is $\Gamma$-representation, taking the product of the integrand with the $K$-theoretic Thom class, and integrating along $W$ to the $K$-theory of $Y$ via the Thom isomorphism. We therefore begin our discussion of the pushforward map in differential $K$-theory by defining integration along a $\Gamma$-representation.

\begin{defn} Let $W$ be an orthogonal $\Gamma$-representation.  We say that $W$ is a spin $\Gamma$-representation if $\Gamma$ acts by orientation-preserving isometries and the homomorphism $\Gamma \rightarrow \textrm{SO}(W)$ induced by the representation lifts to a homomorphism $\Gamma \rightarrow \textrm{Spin}(W)$.\end{defn}

\noindent Fix some spin $\Gamma$-representation $W$ of dimension $m$, and let $\psi$ be projection from $W$ to a point.  Then the integration map
\begin{displaymath}
\psi_!: K_{\Gamma,\textrm{c}}^0(W) \longrightarrow K_\Gamma^{-m}(\textrm{pt})
\end{displaymath}
is given by the Thom isomorphism
\begin{displaymath}
K_{\Gamma,\textrm{c}}^0(W) \cong K_\Gamma^0(W^+) \cong K_\Gamma^{-m}(\textrm{pt})
\end{displaymath}
where $W^+$ is the one-point compactification of $W$. Let us recall how this is related to cohomological integration.  If $m$ is odd, then $K^{-m}_\Gamma(\textrm{pt})$ and $K_{\Gamma,\textrm{c}}^0(W)$ are both trivial, and the Thom isomorphism is just the zero map.  Suppose, then, that $m$ is even.  Let
\begin{displaymath}
S(W) = S^+(W) \oplus S^-(W)
\end{displaymath}
be the complex graded spinors, conceived of as a bundle over the point. Notice that Clifford multiplication
\begin{displaymath}
\mu: \psi^*S^\pm(W) \longrightarrow \psi^*S^\mp(W) \qquad (v,s) \mapsto (v,v \cdot s)
\end{displaymath}
gives an isomorphism between $\psi^*S^+(W)$ and $\psi^*S^-(W)$ on the complement of the origin.  It thus determines a class
\begin{displaymath}
s(W) \in K_{\Gamma,\textrm{c}}^0(W)
\end{displaymath}
and this class generates $K_{\Gamma,\textrm{c}}^0(W)$ over $K_\Gamma^0(\textrm{pt})$.  Composing the Thom isomorphism with the isomorphism
\begin{displaymath}
K_\Gamma^{-m}(\textrm{pt}) \cong K_\Gamma^0(\textrm{pt})
\end{displaymath}
given by Bott periodicity yields the isomorphism
\begin{displaymath}
K_{\Gamma,\textrm{c}}^0(W) \stackrel{\sim}{\longrightarrow}  K_\Gamma^0(\textrm{pt}) \qquad s(W) \cdot \psi^*x \mapsto x
\end{displaymath}
Define
\begin{equation} \label{eq:aeven}
a_g(W) \equiv \hat{\alpha}_g(W;d) = \textrm{tr}_\textrm{s}\left( g|_{S(W^{g\perp})} \right) \in \mathbb{C}^\times
\end{equation}
Then, if $x \in K_\Gamma^0(\textrm{pt})$,
\begin{equation}
\int_{W^g}\textrm{ch}_g(s(W) \cdot \psi^*x) = a_g(W)^{-1} \rho_x(g) \cdot u^{-\frac{1}{2}\textrm{dim}\,W^g}
\end{equation}
where $\rho_x$ is the class function given by $x$ under the isomorphism
\begin{displaymath}
K_\Gamma^0(\textrm{pt}) \cong R(\Gamma)
\end{displaymath}

We apply these observations in our definition of an integration map
\begin{displaymath}
\psi_!: \Omega^0_{\Gamma,\textrm{rd}}(W) \longrightarrow \Omega^{-m}_\Gamma(\textrm{pt})
\end{displaymath}
Once again, we may assume that $m$ is even; otherwise this integration is just the zero map.  If $\omega \in \Omega_{\Gamma,\textrm{rd}}^0(W)$, then define its integral $\psi_!\omega$ by
\begin{equation}
(\psi_!\omega)_g = a_g(W) \int_{W^g} \omega_g \cdot u^{-\frac{1}{2}\textrm{dim}\,W^{g\perp}} \qquad g \in \Gamma
\end{equation}
This map descends to cohomology and agrees with integration in equivariant $K$-theory in the sense that, if $(x,\omega) \in A_{\Gamma,\textrm{rd}}^0(X)$, then $(\psi_!x,\psi_!\omega) \in A_\Gamma^{-m}(\textrm{pt})$.  It follows that this integration couples with the Thom isomorphism to give an integration map
\begin{displaymath}
A^0_{\Gamma,\textrm{rd}}(W) \longrightarrow A^{-m}_\Gamma(\textrm{pt})
\end{displaymath}

Finally, we define integration
\begin{displaymath}
\psi_!: \Omega^{-1}_{\Gamma,\textrm{rd}}(W) \longrightarrow \Omega^{-m-1}_\Gamma(\textrm{pt})
\end{displaymath}
as follows. We assume now that $m$ is odd, as the integral will be zero otherwise. Let $W'$ be the $\Gamma$-representation given by adding a trivial $1$-dimensional representation to $W$.  Define
\begin{equation}\label{eq:aodd}
a_g(W) \equiv a_g(W')
\end{equation}
If $\gamma \in \Omega_{\Gamma,\textrm{rd}}^{-1}(W)$ is a differential form representing the equivariant Chern character of $s(W') \in K^{-1}_{\Gamma,\textrm{c}}(W)$, then
\begin{displaymath}
\int_{W^g} \gamma_g = a_g(W)^{-1} u^{-\frac{1}{2}(\textrm{dim}\,W^g+1)}
\end{displaymath}
Thus, given $\eta \in \Omega_{\Gamma,\textrm{rd}}^{-1}(W)$, we define its integral $\psi_!\eta$ by
\begin{equation}
(\psi_!\eta)_g = a_g(W) \int_{W^g} \eta_g \cdot u^{-\frac{1}{2}\textrm{dim}\,W^{g\perp}} \qquad g \in \Gamma
\end{equation}
We have defined this map so as to be compatible with the equivariant Chern character, and it descends to
\begin{displaymath}
\frac{\Omega_{\Gamma,\textrm{c}}^{-1}(W)}{\Omega_{\Gamma,\textrm{c}}^{-1}(W)_K} \longrightarrow \frac{\Omega^{-m-1}_\Gamma(\textrm{pt})}{\Omega^{-m-1}_\Gamma(\textrm{pt})_K}
\end{displaymath}

We now seek to define integration
\begin{displaymath}
\psi_!: \check{K}_{\Gamma,\textrm{c}}^0(W) \longrightarrow \check{K}_\Gamma^{-m}(\textrm{pt})
\end{displaymath}
The integration maps just introduced apply to the short exact sequences (\ref{eq:shortexact})/(\ref{eq:cshortexact}) and (\ref{eq:shortexact2})/(\ref{eq:cshortexact2}) to yield commutative diagrams; for instance, corresponding to the former, we have
\begin{equation} \label{eq:diagram}
\xymatrix{
\frac{H_{\Gamma,\textrm{c}}^{-1}(W;\mathbb{R})}{\textrm{ch}_\Gamma^{-1}K_{\Gamma,\textrm{c}}^{-1}(W)} \ar[r] \ar[d] & \check{K}_{\Gamma,\textrm{c}}^0(W) \ar[r] & A^0_{\Gamma,\textrm{rd}}(W) \ar[d]\\
\frac{H^{-m-1}_\Gamma(\textrm{pt};\mathbb{R})}{\textrm{ch}_\Gamma^{-m-1}K^{-m-1}_\Gamma(\textrm{pt})} \ar[r] & \check{K}_\Gamma^{-m}(\textrm{pt}) \ar[r] & A^{-m}_\Gamma(\textrm{pt}) }
\end{equation}
The inclusion of integration in differential $K$-theory should agree with the given integration maps so as to preserve the commutativity of these diagrams.  This condition suffices to determine its definition.  We consider the even and odd cases separately.  If $m = \textrm{dim}\,W$ is even, then (\ref{eq:diagram}) reduces to
\begin{equation}
\xymatrix{
\check{K}_{\Gamma,\textrm{c}}^0(W) \ar[r]^\sim & A^0_{\Gamma,\textrm{rd}}(W) \ar[d]\\
\check{K}_\Gamma^{-m}(\textrm{pt}) \ar[r]^\sim & A^{-m}_\Gamma(\textrm{pt})}
\end{equation}
and this specifies the integration map.  Now suppose that $m$ is odd.  Then the diagram corresponding to (\ref{eq:shortexact2}), (\ref{eq:cshortexact2}) reduces to
\begin{equation}
\xymatrix{
\frac{\Omega_{\Gamma,\textrm{rd}}^{-1}(W)}{\Omega_{\Gamma,\textrm{rd}}^{-1}(X)_K} \ar[r]^\sim \ar[d] & \check{K}_{\Gamma,\textrm{c}}^0(W) \\
\frac{\Omega^{-m-1}_\Gamma(\textrm{pt})}{\Omega^{-m-1}_\Gamma(\textrm{pt})_K} \ar[r]^\sim & \check{K}_\Gamma^{-m}(\textrm{pt}) }
\end{equation}
and the map is determined by integration of forms.

In order to define integration
\begin{displaymath}
\check{K}_{\Gamma,\textrm{c}}^{-1}(W) \longrightarrow \check{K}_\Gamma^{-1-m}(\textrm{pt})
\end{displaymath}
apply the Thom isomorphism
\begin{displaymath}
K_{\Gamma,\textrm{c}}^{-1}(W) \cong K_{\Gamma,\textrm{c}}^0(W \oplus \mathbb{R})
\end{displaymath}
and integrate as before.  Integration on $\check{K}_{\Gamma,\textrm{c}}^{-i}(W)$ for $i$ arbitrary is then given by Bott periodicity (\ref{eq:bott}).

This definition generalizes to the definition of an integration map
\begin{displaymath}
\psi_!: \check{K}_{\Gamma,\textrm{c}}^{-i}(Y \times W) \longrightarrow \check{K}_\Gamma^{-i-m}(Y)
\end{displaymath}
for any smooth manifold $Y$. The desuspension of a classifying map
\begin{displaymath}
f \in \textrm{Map}_\textrm{rd}(Y \times W,\boldsymbol{\mathfrak{F}}_\Gamma^{-i})_\Gamma
\end{displaymath}
is obtained by regarding $f$ as a map $Y \rightarrow \Omega^m\boldsymbol{\mathfrak{F}}_\Gamma^{-i}$. Recall that
\begin{displaymath}
\Omega^m\boldsymbol{\mathfrak{F}}_\Gamma^{-i} \simeq \boldsymbol{\mathfrak{F}}_\Gamma^{-i-m}
\end{displaymath}
We may therefore assume without loss of generality on the level of differential $K$-theory that $f(Y)$ lies in the image of $\boldsymbol{\mathfrak{F}}_\Gamma^{-i-m}$ under the map
\begin{displaymath}
\boldsymbol{\mathfrak{F}}_\Gamma^{-i-m} \longrightarrow \Omega^m\boldsymbol{\mathfrak{F}}_\Gamma^{-i}
\end{displaymath}
and that the map
\begin{displaymath}
f_W: Y \longrightarrow \boldsymbol{\mathfrak{F}}_\Gamma^{-i-m}
\end{displaymath}
thus obtained is smooth. This yields a map which descends to the Thom isomorphism
\begin{displaymath}
K_{\Gamma,\textrm{c}}^{-i}(Y \times W) \stackrel{\sim}{\longrightarrow} K_\Gamma^{-i-m}(Y)
\end{displaymath}
Furthermore, we have
\begin{equation}
f_W^*\tilde{\omega}^{-i-m} = \psi_!\,f^*\tilde{\omega}^{-i} \in \Omega_\Gamma^{-i-m}(Y)
\end{equation}
Integration over $W$ in differential equivariant $K$-theory is thus defined as follows: if $\check{x} \in \check{K}_{\Gamma,\textrm{c}}^{-i}(Y \times W)$ is represented by the triple $(f,\eta,\omega)$; then $\psi_!\,\check{x} \in \check{K}_\Gamma^{-i-m}(Y)$ is the class represented by the triple
\begin{displaymath}
\left( f_W, \psi_!\,\eta, \psi_!\,\omega \right)
\end{displaymath}

\subsection{The pushforward map}

We begin by recalling the topological pushforward map.

Suppose that $p: X \rightarrow Y$ is a $\Gamma$-equivariant fiber bundle with fiber of dimension $n$, where $X$ is compact, such that the relative tangent bundle $\textrm{T}(X/Y)$ is a spin $\Gamma$-equivariant vector bundle. Choose an equivariant spin structure for $T(X/Y)$. Let $\varphi: X \hookrightarrow W$ be an equivariant embedding of $X$ into a real spin orthogonal $\Gamma$-representation $W$ of dimension $2N+n$. Couple this with $p$ to obtain the embedding
\begin{displaymath}
p \times \varphi: X \hookrightarrow Y \times W
\end{displaymath}
The normal bundle
\begin{displaymath}
\nu \stackrel{\pi}{\longrightarrow} X
\end{displaymath}
to $X$ in $Y \times W$ is a spin equivariant vector bundle over $X$ of rank $2N$, with equivariant spin structure induced by the spin structures of $T(X/Y)$ and $W$. Let
\begin{equation}
U \equiv s(\nu)\in K_{\Gamma,\textrm{c}}^{2N}(\nu) \cong K_{\Gamma,\textrm{c}}^0(\nu)
\end{equation}
be the equivariant $K$-theoretic Thom class. Then the Thom isomorphism
\begin{displaymath}
K^{-i}_\Gamma(X) \stackrel{\sim}{\longrightarrow} K^{-i+2N}_{\Gamma,\textrm{c}}(\nu)
\end{displaymath}
is given by
\begin{displaymath}
x \mapsto U \cdot \pi^*x
\end{displaymath}
Next, identify $\nu$ with a $\Gamma$-invariant tubular neighborhood of $X$ in $Y \times W$ (cf. \cite{Kz}); then the exact sequence of the pair $(Y \times W,\nu)$ induces a homomorphism
\begin{displaymath}
K^{-i+2N}_{\Gamma,\textrm{c}}(\nu) \longrightarrow K^{-i+2N}_{\Gamma,\textrm{c}}(Y \times W)
\end{displaymath}
Composition with the Thom isomorphism yields the pushforward map
\begin{displaymath}
p_!: K^{-i}_\Gamma(X) \longrightarrow K^{-i-n}_\Gamma(Y)
\end{displaymath}
This pushforward is independent of the choices we have made.

To make this pushforward ``differential,'' we need a geometric refinement of $U$, a differential $K$-theoretic Thom class $\check{U} \in \check{K}_{\Gamma,\textrm{c}}^{2N}(\nu)$ which projects to $U$ under the exact sequence (\ref{eq:shortexact2}). We begin with a definition.

\begin{defn} A $\Gamma$-equivariant Riemannian fiber bundle is an equivariant fiber bundle $p: X \rightarrow Y$ together with an invariant metric on the relative tangent bundle $\textrm{T}(X/Y) \rightarrow X$ and an invariant projection
\begin{displaymath}
\textrm{T}X \longrightarrow \textrm{T}(X/Y)
\end{displaymath}
The choice of a $\Gamma$-equivariant Riemannian structure amounts to the choice of metric and projection.
\end{defn}

\noindent Fix a $\Gamma$-equivariant Riemannian structure for $p$. Then the metric and projection determine a connection $\nabla^{X/Y}$ for $\textrm{T}(X/Y)$. Choose a $\Gamma$-invariant, compatibly orthogonal connection $\nabla^\nu$ for $\nu$, and let $\nabla^{S(\nu)}$ be the induced connection for $S(\nu)$. Define
\begin{equation}\begin{array}{c}
\omega = \frac{1}{a(W)}\, \pi^*\hat{\alpha}\left(\textrm{T}(X/Y);\nabla^{X/Y}\right) \wedge \upsilon\left(\nu;\nabla^\nu\right) \end{array}
\end{equation}
where $a(W) \equiv a_g(W)$ is the class function defined in (\ref{eq:aeven}). Next, choose a classifying map
\begin{displaymath}
f \in \textrm{Map}_\textrm{rd}(\nu,\mathcal{B}_\Gamma)_\Gamma
\end{displaymath}
for $U$, and let $V \in \mathcal{G}_\Gamma$ so that $f(\nu) \subset \mathcal{O}_V$. Notice that
\begin{displaymath}
\nu \oplus \textrm{T}(X/Y) \simeq \overline{W},
\end{displaymath}
where $\overline{W}$ is the trivial bundle over $X$ with fiber $W$, and define
\begin{equation} \begin{array}{c}
\eta = \sigma_V\left(\pi^*S(\nu); f, \boldsymbol{\nabla}^{\pi^*S(\nu)}\right) \qquad \qquad \qquad \qquad \qquad \qquad \qquad \qquad\\
\qquad - \frac{1}{a(W)}\, \omega\left(\pi^*S(\nu);\boldsymbol{\nabla}^{\pi^*S(\nu)} \right) \wedge \pi^* \textrm{CS}_{\hat{\textrm{A}},\Gamma}\left(\overline{W};\nabla^\nu \oplus \nabla^{X/Y},d \right) \end{array}
\end{equation}
where the first term is defined as in equation (\ref{eq:cs}). The latter factor of the second term is a Chern-Simons form for the equivariant $\hat{\textrm{A}}$-polynomial; it satisfies
\begin{displaymath}
d_\Gamma\textrm{CS}_{\hat{\textrm{A}},\Gamma}\left(\overline{W};\nabla^\nu \oplus \nabla^{X/Y},d \right) = a(W) - \hat{\alpha}\left(\nu;\nabla^\nu\right) \wedge \hat{\alpha}\left(\textrm{T}(X/Y);\nabla^{X/Y}\right)
\end{displaymath}
The quadruple $(A,f,\eta,\omega)$ is a representative for a choice of a differential equivariant $K$-theory Thom class
\begin{displaymath}
\check{U} \in K_{\Gamma,\textrm{c}}^{2N}(\nu) \cong K_{\Gamma,\textrm{c}}^0(\nu)
\end{displaymath}
It is independent of the choice of $f$ and $V$, and depends only on the choice of $\nabla^\nu$ and the Riemannian structure for $p$.

The choices we have made together amount to a choice of differential equivariant $K$-theory orientation (cf. \cite{HS}):

\begin{defn} Let $p: X \rightarrow Y$ be an equivariant fiber bundle with fiber of dimension $n$, where $X$ is compact. A \emph{differential equivariant $K$-theory orientation} $\Phi$ of $p$ consists of the following data:
\begin{enumerate}
\item a topological equivariant $K$-theory orientation of $p$, namely, an equivariant embedding $\varphi$ of $X$ into a real spin $\Gamma$-representation $W$ of dimension $2N + n$, an identification of the normal bundle $\nu$ to $X \subset Y \times W$ with a tubular neighborhood of $X$, and a spin structure for $\textrm{T}(X/Y)$; and
\item a differential equivariant $K$-theoretic Thom class $\check{U} \in \check{K}_{\Gamma,\textrm{c}}^{2N}(\nu)$, namely, a refinement of the $K$-theoretic Thom class $U$ of $\nu \rightarrow X$ given by a choice of equivariant Riemannian structure for $p$ and connection $\nabla^\nu$ for $\nu$.
\end{enumerate}
\end{defn}

\noindent We shall often refer to such orientations merely as \emph{differential orientations}. We are now ready to define the pushforward map:

\begin{defn} Let $p: X \rightarrow Y$ be an equivariant fiber bundle with fiber of dimension $n$, where $X$ is compact.  Choose a differential equivariant $K$-theory orientation $\Phi = (W,\varphi,\nu,\check{U})$ of $p$.  Then the pushforward map $p_!$ is defined as the composition
\begin{displaymath}
\check{K}_\Gamma^{-i}(X) \longrightarrow \check{K}^{-i+2N}_{\Gamma,\textrm{c}}(\nu) \longrightarrow \check{K}^{-i+2N}_{\Gamma,{c}}(Y \times W) \longrightarrow \check{K}_\Gamma^{-i-n}(Y)
\end{displaymath}
where the first map is given by $\check{x} \mapsto  \check{U} \cdot \pi^*\check{x}$, the second map is given by the exact sequence of the pair $(W,\nu)$, and the last map is integration over a $\Gamma$-representation.
\end{defn}

\subsection{Properties of the pushforward map}

The pushforward map depends on the differential orientation only in the choice of Riemannian structure. This is a consequence of the following analogue of Stokes' Theorem:

\begin{prop} \label{prop:stokes} Let $p: X \rightarrow Y$ be an equivariant fiber bundle with fiber of dimension $n$, and suppose that $p$ extends to an equivariant fiber bundle $p: C \rightarrow Y$ with $\partial C = X$. Suppose that $\textrm{\emph{T}}(C/Y)$ is spin. Choose a differential equivariant $K$-theory orientation of $p: X \rightarrow Y$ and a compatible equivariant Riemannian structure for $p: C \rightarrow Y$. If $\check{x} \in \check{K}^{-i}_\Gamma(X)$ extends to a class $\check{c} \in \check{K}^{-i}_\Gamma(C)$, then $p_!(\check{x})$ is the image of a differential form under the map
\begin{displaymath}
\Omega^{-i-n-1}_\Gamma(Y) \longrightarrow \check{K}^{-i-n}_\Gamma(Y)
\end{displaymath}
given by the short exact sequence (\ref{eq:shortexact2}), and the $g$-component of this form is
\begin{equation}
\int_{C^g/Y^g} \hat{\alpha}_g\left(\textrm{\emph{T}}(C/Y);\nabla^{C/Y}\right) \wedge \omega_g(\check{c}) \cdot u^{-l_g}
\end{equation}
where
\begin{equation} \begin{array}{c}
l_g = \frac{1}{2}\left(\textrm{\emph{rk}}\,\textrm{\emph{T}}(C/Y) - \textrm{\emph{rk}}\,\textrm{\emph{T}}(C^g/Y^g)\right) \in \mathbb{Z} \end{array}
\end{equation}
and $\omega(\check{c})$ is the curvature of $\check{c}$. \end{prop}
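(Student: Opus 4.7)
The plan is to argue in two stages: first, that the topological characteristic class $c(p_!(\check{x})) \in K_\Gamma^{-i-n}(Y)$ vanishes, so by the short exact sequence \eqref{eq:shortexact2} the class $p_!(\check{x})$ lies in the image of the map $\Omega^{-i-n-1}_\Gamma(Y) \to \check{K}^{-i-n}_\Gamma(Y)$; and second, to identify this form by a Stokes-theorem computation.

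For the first stage, since $\check{x}$ extends to $\check{c}$, we have $c(\check{x}) = i^* c(\check{c})$, where $i\colon X = \partial C \hookrightarrow C$. Thus $p_!(c(\check{x}))$ is the composition $K_\Gamma^{-i}(C) \stackrel{i^*}{\to} K_\Gamma^{-i}(X) \stackrel{p_!}{\to} K_\Gamma^{-i-n}(Y)$ applied to $c(\check{c})$, which vanishes by cobordism invariance of the equivariant topological pushforward: for the fiber bundle $p\colon C \to Y$ with $\partial C = X$, the fiberwise equivariant index of any Dirac operator on $X$ twisted by a bundle extending across $C$ is zero (an equivariant refinement of the standard APS-type argument). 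So $p_!(\check{x})$ is represented by a quadruple $(W, i_W, \beta, d_\Gamma \beta)$ for some $\beta \in \Omega^{-i-n-1}_\Gamma(Y)$, unique modulo $\Omega^{-i-n-1}_\Gamma(Y)_K$.

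For the second stage, by the construction of $p_!$ (integration along $W$ of $\check{U} \cdot \pi^*\check{x}$), the curvature $\omega_g(p_!(\check{x}))$ equals $\int_{X^g/Y^g} \hat{\alpha}_g(\textrm{T}(X/Y)) \wedge \omega_g(\check{x}) \cdot u^{-l_g}$ in each $g$-component. Now $\hat{\alpha}_g(\textrm{T}(C/Y)) \wedge \omega_g(\check{c})$ is closed on $C^g$, and by multiplicativity of $\hat{\alpha}_g$ together with the fact that $\hat{\alpha}_g$ of the trivial normal line bundle is $1$, its restriction to $X^g$ equals $\hat{\alpha}_g(\textrm{T}(X/Y)) \wedge \omega_g(\check{x})$. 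Ordinary Stokes' theorem for fiber integration therefore yields $d_\Gamma \alpha = \omega(p_!(\check{x}))$, where $\alpha$ denotes the form in the proposition. To upgrade $d_\Gamma \beta = d_\Gamma \alpha$ to the equivalence $\beta \equiv \alpha$ modulo $\Omega^{-i-n-1}_\Gamma(Y)_K$, I would construct the pushforward directly from the $C$ side: extend the equivariant embedding $\varphi\colon X \hookrightarrow W$ to an equivariant embedding $\Phi\colon C \hookrightarrow W \times [0,1]$ sending $X$ into $W \times \{0\}$, extend the Riemannian and connection data across $C$, form $\check{U}_C \cdot \pi_C^* \check{c}$ in $\check{K}_{\Gamma,\textrm{c}}(\nu_C)$, push forward via the pair $(Y \times W \times [0,1], \nu_C)$, and integrate. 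A direct fiberwise comparison of this result with the original $p_!(\check{x})$ yields the claimed form.

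The hardest part will be this final Stokes comparison: one must carefully track the various Chern-Simons correction forms that appear in the construction of $\check{U}$ and in the ring and pushforward structures, and invoke Remark~\ref{rem:cs} to absorb the auxiliary choices into $\Omega^{-i-n-1}_\Gamma(Y)_K$. The cobordism invariance in the first stage, though standard, also requires care in the equivariant setting; alternatively it can be derived within the construction of $p_!$ itself by extending the Thom class across $\nu_C$ and applying Stokes at the $K$-theoretic level.
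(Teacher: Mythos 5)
Your overall strategy in the second stage is the paper's: embed $C$ in $Y \times W \times I$, extend the differential Thom class, and compare the two ends by a fiberwise Stokes computation, with Remark~\ref{rem:cs} absorbing the Chern--Simons corrections. (Your first stage is logically harmless but not where the work is: the vanishing of the characteristic class falls out of the direct construction anyway, and, as you rightly note, matching curvatures $d_\Gamma\beta = d_\Gamma\alpha$ cannot by itself pin down $\beta$ modulo $\Omega^{-i-n-1}_\Gamma(Y)_K$, so everything hinges on the direct comparison.)

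The genuine gap is in the sentence ``extend the equivariant embedding $\varphi\colon X \hookrightarrow W$ to an equivariant embedding $\Phi\colon C \hookrightarrow W \times [0,1]$.'' For the representation $W$ fixed by the chosen differential orientation there is in general no such extension, and you cannot repair this by silently enlarging $W$, because at this point in the development you are not entitled to the fact that $p_!$ is independent of the choice of embedding and Thom class --- that independence is a \emph{corollary} of this very proposition. The paper spends the second half of its proof exactly here: it stabilizes the orientation to $W_1 = W \oplus W_0$ with $W_0$ a large even-dimensional spin $\Gamma$-representation, builds the new Thom class $\check{U}_1$ as an explicit product with a standard Thom class on $W_0$ so that the pushforward is verifiably unchanged, then uses an equivariant isotopy (via the equivariant Whitney embedding theorem, after embedding $C$ in some spin representation $V$ and passing to $(W\oplus V)^s$) together with a product-structure pullback of $\check{U}_1$ to replace the given orientation by one whose embedding and Thom data do extend over $C$, again checking at each step that $p_!\check{x}$ is unchanged; only then does the boundary-case Stokes argument you sketch apply. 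Without this stabilization-and-isotopy step, and without a justification that each modification of the orientation data preserves $p_!\check{x}$, your ``direct fiberwise comparison'' has nothing to compare against, so the argument as proposed does not close.
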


\begin{proof} Let $\Phi = (W,\varphi,\nu,\check{U})$ be a differential equivariant $K$-theory orientation of $p: X \rightarrow Y$.  Suppose for now that there exists an embedding $\varphi: C \hookrightarrow W \times I$ which extends $\varphi$ in the following way: the preimage of $W \times \partial I$ in $C$ is $X$, and $\varphi$, restricted to $X$ and composed with projection to $W$, agrees with $\varphi$.  Let $\pi: \nu' \rightarrow C$ be the normal bundle to $C$ in $Y \times W \times I$, identified with a tubular neighborhood of $C$ in such a way that $\nu = \nu' \cap (Y \times W \times \partial I)$.

Extend $\check{U}$ to a differential Thom class $\check{U}' \in \check{K}_{\Gamma,\textrm{c}}^{2N}(\nu')$, and let $(f,\eta,\omega)$ be a triple representing $\check{U}' \cdot \pi^*\check{c}$. For $j = 0,1$, let $\check{c}_j$ denote the restriction of $\check{c}$ to $X_j$, where
\begin{displaymath}
X_j = \varphi^{-1}(W \times \lbrace j \rbrace) \subset X
\end{displaymath}
Write
\begin{displaymath}
\check{y}_j = (p|_{X_j})_!\check{c}_j \in \check{K}^{-i-n}_\Gamma(Y)
\end{displaymath}
and let $(f_j,\eta_j,\omega_j)$ be a representative triple for $\check{y}_j$. Then the ``desuspension''
\begin{displaymath}
f_W \in \textrm{Map}(Y \times I,\boldsymbol{\mathfrak{F}}_\Gamma^{-i-n})_\Gamma
\end{displaymath}
of $f$, obtained as in the previous subsection, is a homotopy from $f_0$ to $f_1$, and
\begin{displaymath}
\left(f_0,\eta_1 + \int_{Y \times I/Y} f_W^*\tilde{\omega},\omega_1\right)
\end{displaymath}
is a representative for $\check{y}_1$. Let $\iota_j$ be the inclusion of $Y \times W \times \lbrace j \rbrace$ in $Y \times W \times I$ for $j = 0,1$. Then
\begin{eqnarray}
d \int_{Y^g \times W^g \times I/Y^g}\eta_g & = & \int_{Y^g \times W^g \times I/Y^g}d\eta_g {}\nonumber\\
&& \qquad - \int_{Y^g \times W^g/Y^g} (\iota_1^*\eta_g - \iota_0^*\eta_g) {}
\end{eqnarray}
Now,
\begin{displaymath}
d\eta_g = \omega_g - f^*\tilde{\omega}_g
\end{displaymath}
and
\begin{eqnarray}
\int_{Y^g \times W^g \times I/Y^g} \omega_g & = & \int_{\nu^g/Y^g} \omega_g(\check{U}') \wedge \pi^* \omega_g(\check{c}) {}\\
& = & \frac{u^{-m_g}}{a_g(W)}\int_{C^g/Y^g} \hat{\alpha}_g\left(\textrm{T}(C/Y);\nabla^{C/Y}\right) \wedge \omega_g(\check{c}) {}\nonumber
\end{eqnarray}
while
\begin{equation}
\int_{Y^g \times W^g \times I/Y^g} f^*\tilde{\omega}_g = \frac{1}{a_g(W)}\int_{Y^g \times I/Y^g} f_W^*\tilde{\omega}_g
\end{equation}
On the other hand,
\begin{equation}
\int_{Y^g \times W^g/Y^g} (\iota_1^*\eta_g - \iota_0^*\eta_g) = \frac{1}{a_g(W)}(\eta_{1g} - \eta_{0g})
\end{equation}
It follows that
\begin{displaymath}
\eta_{1g} + \int_{Y^g \times I/Y^g}f_W^*\tilde{\omega}_g
\end{displaymath}
differs from
\begin{displaymath}
\eta_{0g} + \int_{C^g/Y^g} \hat{\alpha}_g\left(\textrm{T}(C/Y);\nabla^{C/Y}\right) \wedge \omega_g(\check{c}) \cdot u^{-l_g}
\end{displaymath}
by an exact form. Thus,
\begin{displaymath}
\left( c, \sum_{g \in \Gamma} \int_{C^g/Y^g} \hat{\alpha}_g\left(\textrm{T}(C/Y);\nabla^{C/Y}\right) \wedge \omega_g(\check{c}) \cdot u^{-l_g}, \omega_1 - \omega_0 \right),
\end{displaymath}
where $c$ is a constant map, is a representative for
\begin{displaymath}
\check{y}_1 - \check{y}_0 = p_!\check{x} \in \check{K}^{-i-n}_\Gamma(Y)
\end{displaymath}
This proves the proposition in the case that the given differential equivariant $K$-theory orientation extends to $C$ in the way we have described.

Now suppose that $W_0$ is a real orthogonal spin $\Gamma$-representation of dimension $2N_0$, and let $W_1 = W \oplus W_0$.  We may use the canonical embedding $W \hookrightarrow W_1$ to define a new differential orientation $\Phi_1$ in such a way as to leave the pushforward unchanged. First, retain the Riemannian structure for $p$. Let $\varphi_1$ be the composition of $\varphi$ with $W \hookrightarrow W_1$.  The normal bundle $\nu_0$ to the origin in $W_0$ is isomorphic to $W_0$; we identify it with the unit ball centered at the origin. Set $\nu_1 = \nu \times \nu_0$, identified with the normal bundle to $X$ in $Y \times W_1$.

Construct $\check{U}_1$ as follows.  First, let $\pi_0: \nu_0 \rightarrow \textrm{pt}$ be projection to the origin, and let $U_0$ be the equivariant $K$-theory Thom class given by $S(\nu_0)$.  Let $\mu_0$ denote the Clifford multiplication map from $\pi_0^*S^+(\nu_0)$ to $\pi_0^*S^-(\nu_0)$. Let $\check{U}_0 \in \check{K}_{\Gamma,\textrm{c}}^{2N_0}(\nu_0) \cong \check{K}_{\Gamma,\textrm{c}}^0(\nu_0)$ be the class represented by the quadruple
\begin{displaymath}
(V_0,f_0,\eta_0,\omega_0)
\end{displaymath}
where
\begin{displaymath}
f_0 \in \textrm{Map}_\textrm{rd}(\nu_0,\mathcal{O}_{V_0})_\Gamma,
\end{displaymath}
is a map representing the Thom class $U_0$,
\begin{equation}
\omega_0 = \omega\left( \pi_0^*S(\nu_0);d + \mu_0 \right),
\end{equation}
and
\begin{equation}
\eta_0 = \sigma_{V_0}\left( \pi_0^*S(\nu_0); f_0, d + \mu_0 \right)
\end{equation}
is defined as in (\ref{eq:cs}). Here, $d$ denotes the trivial connection (the exterior derivative).  Now let $\phi$ denote projection of $Y \times W_1$ onto $Y \times W$ and $\phi_0$ projection onto $W_0$, and set
\begin{equation}
\check{U}_1 = \phi^*\check{U} \cdot \phi_0^*\check{U}_0 \in \check{K}_{\Gamma,\textrm{c}}^{2(N+N_0)}(\nu_1)
\end{equation}
This class is the differential Thom class given by the choices we have made, and these data comprise a differential orientation $\Phi_1$ of $p$.  It is an immediate consequence of our choices that the pushforward given by $\Phi_1$ is the same as that given by $\Phi$.

Now, $C$ can be equivariantly embedded in some even-dimensional spin $\Gamma$-representation $V$, hence in $Y \times (W \oplus V)$.  The restriction of the embedding
\begin{displaymath}
C \hookrightarrow Y \times (W \oplus V)
\end{displaymath}
to $X$ is equivariantly homotopic to the embedding defined by composing $\varphi$ with the canonical inclusion $Y \times W \hookrightarrow Y \times (W \oplus V)$. If $s \in \mathbb{N}$ is large enough, the two induced embeddings of $X$ into $Y \times (W \oplus V)^s$ are thus equivariantly \emph{isotopic}, by the equivariant Whitney embedding theorem (cf. \cite[Proposition 1.10]{W}).  Let
\begin{displaymath}
W_0 = W^{s-1} \oplus V^s
\end{displaymath}
We take $s$ to be odd, so that $W_0$ is even-dimensional.  Construct the differential orientation $\Phi_1 = (W_1,\varphi_1,\nu_1,\check{U}_1)$ as before.  Let the embedding $X \hookrightarrow W_1$ induced by $C \hookrightarrow V$ be denoted by $\varphi_2$.  Use an equivariant isotopy from $\varphi_1$ to $\varphi_2$ to embed $X \times I$ in $W_1 \times I$, and identify $\nu_1 \times I$ with a tubular neighborhood of $X \times I$ in $Y \times W_1 \times I$ in such a way that its intersection with $Y \times W_1 \times \lbrace 0 \rbrace$ is precisely $\nu_1$.  Use the product structure to pull $\check{U}_1$ back to $\nu_1 \times I$.  Let $\Phi_2$ be the differential equivariant $K$-theory orientation for $X$ obtained by restricting these data to $Y \times W_1 \times \lbrace 1 \rbrace$.  Then $\Phi_2$ yields the same pushforward as $\Phi_1$, hence $\Phi$.

Now let $h: Y \rightarrow I$ be a defining function for $X$, so that
\begin{displaymath}
X = h^{-1}\lbrace0,1\rbrace
\end{displaymath}
Use this to construct an embedding $C \hookrightarrow W_1 \times I$.  Apply the first part of the proof to the new differential orientation $\Phi_2$.  \end{proof}

\begin{cor} The pushforward map depends on the differential orientation for $p: X \rightarrow Y$ only in the choice of equivariant Riemannian structure. \end{cor}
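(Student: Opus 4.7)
The plan is to form a cylinder and apply Stokes' theorem. Let $\Phi_0$ and $\Phi_1$ be two differential orientations of $p: X \to Y$ sharing the same equivariant Riemannian structure, and let $(p_0)_!$ and $(p_1)_!$ denote the induced pushforwards. I would form $C = X \times I$ with $p_C: C \to Y$ given by $p$ on each slice and equipped with the product Riemannian structure, pull back any class $\check{x} \in \check{K}_\Gamma^{-i}(X)$ to $\check{c} = \pi^*\check{x}$ via the projection $\pi: C \to X$, and view $\partial C = X_0 \sqcup X_1$, where $X_j = X \times \{j\}$ is endowed with the orientation $\Phi_j$.

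Next I would construct a single differential orientation $\Psi$ on $p_C$ whose restriction to $X_j$ is $\Phi_j$. This follows the stabilization argument near the end of the proof of Proposition~\ref{prop:stokes}: by adjoining auxiliary copies of spin $\Gamma$-representations one may arrange both embeddings to land in a common $Y \times W$, after which the equivariant Whitney embedding theorem (cf.~\cite[Proposition 1.10]{W}) produces an equivariant isotopy between them. Coupling this isotopy with straight-line homotopies of the two choices of connection $\nabla^\nu$ and of the tubular neighborhood identifications assembles $\Psi$; the product Riemannian structure on $C$ is automatically compatible with the common Riemannian structure of $\Phi_0$ and $\Phi_1$.

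With $\Psi$ in hand, I would apply the two-boundary-component form of the calculation from the proof of Proposition~\ref{prop:stokes} (which analyzes precisely $\partial C = X_0 \sqcup X_1$ via the quantity $\check{y}_1 - \check{y}_0$) to obtain
\begin{displaymath}
(p_1)_!(\check{x}) - (p_0)_!(\check{x}) \;=\; \left[\, \sum_{g \in \Gamma} \int_{C^g/Y^g} \hat{\alpha}_g\!\left(\textrm{T}(C/Y);\nabla^{C/Y}\right) \wedge \omega_g(\check{c}) \cdot u^{-l_g} \,\right]
\end{displaymath}
in the image of $\Omega_\Gamma^{-i-n-1}(Y)/\Omega_\Gamma^{-i-n-1}(Y)_K$ inside $\check{K}_\Gamma^{-i-n}(Y)$. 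Because the Riemannian structure on $C$ is the product one, $\textrm{T}(C/Y) \cong \pi^*\textrm{T}(X/Y) \oplus \underline{\mathbb{R}}$ with the direct-sum connection, so $\hat{\alpha}_g(\textrm{T}(C/Y);\nabla^{C/Y})$ contains no $dt$ component, and $\omega_g(\check{c}) = \pi^*\omega_g(\check{x})$ likewise contains none. Since the fibers of $C^g \to Y^g$ incorporate the $I$-factor, fiber integration of a $dt$-free form must vanish, forcing $(p_1)_!(\check{x}) = (p_0)_!(\check{x})$.

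The main obstacle is the construction of $\Psi$: the two initial orientations may involve different $\Gamma$-representations, embeddings, tubular identifications, and connections, so reconciling them into a single orientation on $C$ demands the stabilization-plus-Whitney-isotopy machinery already developed inside the proof of Proposition~\ref{prop:stokes}. Once that interpolation is carried out, the vanishing of the integrand via the product structure is immediate and requires no further analytic input.
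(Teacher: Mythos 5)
Your argument is correct in substance and runs on the same engine as the paper's proof: Proposition~\ref{prop:stokes} applied to the cylinder $X \times I \rightarrow Y$ with the product Riemannian structure, followed by the observation that $\hat{\alpha}\left(\textrm{T}(X \times I/Y);\nabla\right) \wedge \omega(\pi^*\check{x})$ is pulled back from $X$, so its integral over fibers containing the $I$-direction vanishes. Where you genuinely diverge is in how the two orientations are fed into that proposition. The paper never interpolates between $\Phi_0$ and $\Phi_1$: it forms the disjoint union $X_0 \sqcup X_1$, reverses the fiberwise orientation on $X_0$ and uses $-\check{U}_0$, so that the single pushforward $p_!(\check{x}_0 + \check{x}_1)$ attached to this one differential orientation equals $p_{1!}\check{x} - p_{0!}\check{x}$; Proposition~\ref{prop:stokes} then applies exactly as stated, its proof silently performing all the stabilization and isotopy needed to extend the orientation over the cylinder. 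You instead construct an interpolating orientation $\Psi$ over $X \times I$ and invoke the internal two-endpoint computation ($\check{y}_1 - \check{y}_0$) from the \emph{proof} of Proposition~\ref{prop:stokes} rather than its statement. That route works, but it transfers the construction cost to you: strictly speaking, after stabilizing into a common representation and isotoping, the restrictions of $\Psi$ to the two ends are not $\Phi_0$ and $\Phi_1$ themselves but stabilized, isotoped versions, so you must also quote the invariance facts established inside that proof (adding an even-dimensional spin representation with its standard Thom class, and pulling the Thom data back through an isotopy, leave the pushforward unchanged), together with a convex interpolation of the connections $\nabla^\nu$ and a tubular-neighborhood isotopy rather than a literal ``straight line''; none of this is a gap, since you cite precisely that machinery and the shared Riemannian structure keeps the $\hat{\textrm{A}}$-factor fixed, but it is the bookkeeping the paper's doubling-with-negated-Thom-class trick is designed to avoid. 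What your version buys is a more transparent picture---the two orientations are literally connected by a path of orientation data, making the ``only the Riemannian structure matters'' claim geometrically visible; what the paper's version buys is that Proposition~\ref{prop:stokes} can be used as a black box.
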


\begin{proof} Suppose $\Phi_j = (W_j,\varphi_j,\nu_j,\check{U}_j)$, $j = 0,1$, are two differential orientations of $p: X \rightarrow Y$ with the same Riemannian structure; denote the respective pushforward maps by $p_{0!},p_{1!}$.  Let
\begin{displaymath}
W = W_0 \oplus W_1 \oplus \mathbb{R}
\end{displaymath}
where $\Gamma$ acts on $\mathbb{R}$ trivially.  Let $(X_0,Y_0)$, $(X_1,Y_1)$ be two copies of $(X,Y)$, but imbue $\textrm{T}(X_1/Y_1)$ with the given orientation and $\textrm{T}(X_0/Y_0)$ with its opposite. Let
\begin{displaymath}
p \sqcup p: X_0 \sqcup X_1 \longrightarrow Y_0 \sqcup Y_1
\end{displaymath}
be the map defined by
\begin{equation}
(p \sqcup p)(x) = \left\{ \begin{array}{l l}
p(x) \in Y_0 & \textrm{if $x \in X_0$}\\
p(x) \in Y_1 & \textrm{if $x \in X_1$} \end{array} \right.
\end{equation}
We construct a differential orientation $\Phi$ of $p \sqcup p$ as follows.  The embedding $\varphi: X_0 \sqcup X_1 \rightarrow W$ is given by
\begin{equation}
\varphi(x) = \left\{
\begin{array}{l l}
\left(\varphi_0(x),0,-1\right) & \textrm{if $q \in X_0$}\\
\left(0,\varphi_1(x),1\right) & \textrm{if $q \in X_1$} \end{array}\right.
\end{equation}
The normal bundle $\nu$ to $X_0 \sqcup X_1$ in
\begin{displaymath}
(Y_0 \sqcup Y_1) \times W = \left((Y_0 \times W_0) \times W_1 \times \mathbb{R}\right)\, \bigsqcup\, \left(W_0 \times (Y_1 \times W_1) \times \mathbb{R}\right)
\end{displaymath}
is identified with a tubular neighborhood as
\begin{equation}
\nu = \left( \nu_0 \times B(W_1) \times (-2,0) \right) \bigsqcup \left( B(W_0) \times \nu_1 \times (0,2) \right)
\end{equation}
where $B(W_j)$ is the unit ball in $W_j$ centered at the origin.  Form the Thom class $\check{U}$ by extending $\check{U}_1$ and $-\check{U}_0$ as in the previous proof.

Let $p_!$ denote the pushforward map given by $\Phi$.  Take $\check{x} \in \check{K}_\Gamma^{-i}(X)$.  Let $\check{x}_1$ be the corresponding class in $\check{K}_\Gamma^{-i}(X_1)$ and $\check{x}_0$ the class in $\check{K}_\Gamma^{-i}(X_0)$.  We compute
\begin{equation}
p_!(\check{x}_0 + \check{x}_1) = p_{1!}(\check{x}) - p_{0!}(\check{x})
\end{equation}
Now consider the fibration
\begin{displaymath}
X \times I \longrightarrow Y
\end{displaymath}
Give $\textrm{T}I$ the standard metric with the trivial connection, and give $\textrm{T}(X \times I/Y)$ the product metric. Let $\nabla$ be the corresponding connection. By Proposition~\ref{prop:stokes}, $p_!(\check{x}_0 + \check{x}_1)$ is the image of a differential form under the short exact sequence (\ref{eq:shortexact2}), and the $g$-component of this form is
\begin{equation} \label{eq:toady}
\pm\int_{X^g \times I} \hat{\alpha}_g \left(\textrm{T}(X \times I/Y);\nabla\right) \wedge \omega_g(\pi_I^*\check{x})
\end{equation}
However,
\begin{equation}
\nabla = \pi_I^*\nabla^{X/Y} \oplus d
\end{equation}
where $\pi_I$ denotes projection onto $X$, so the form (\ref{eq:toady}) is
\begin{equation}
\pm \int_{X^g \times I} \pi_I^* \left(\hat{\alpha}_g \left(\textrm{T}(X/Y);\nabla^{X/Y}\right) \wedge \omega_g(\check{x})\right) = 0
\end{equation}
So $p_{0!}$ agrees with $p_{1!}$. \end{proof}

Next let us consider how the pushforward behaves under composition.

\begin{prop} \label{prop:composition} Suppose that $p: X \rightarrow Y$ and $q: Y \rightarrow Z$ are two compact equivariant fiber bundles, with fibers of dimension $m$ and $n$, respectively, such that the respective relative tangent bundles are spin. Fix a Riemannian structure for each of $p$ and $q$. Then $q \circ p$ has a natural Riemannian structure, and
\begin{displaymath}
q_! \circ p_! = (q \circ p)_!
\end{displaymath}
as maps
\begin{displaymath}
\check{K}_\Gamma^{-i}(X) \longrightarrow \check{K}_\Gamma^{-i-m-n}(Z)
\end{displaymath}
\end{prop}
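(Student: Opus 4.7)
The plan is to construct a differential equivariant $K$-theory orientation of $q \circ p$ that is compatible with the chosen orientations of $p$ and $q$, verify that the associated differential Thom class factors as a product of the two given ones, and then deduce the functoriality identity from a Fubini theorem for fibrewise integration along $\Gamma$-representations together with a projection formula. The corollary to Proposition \ref{prop:stokes} will allow us to pick any convenient orientation data without loss of generality, since only the Riemannian structure affects the result.

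Concretely, I would fix $\Phi_p = (W_p,\varphi_p,\nu_p,\check{U}_p)$ and $\Phi_q = (W_q,\varphi_q,\nu_q,\check{U}_q)$, set $W = W_p \oplus W_q$, and form the composite equivariant embedding $\varphi : X \hookrightarrow Z \times W$ given by $x \mapsto (q(p(x)), \varphi_p(x), \varphi_q(p(x)))$. The normal bundle to $X$ in $Z \times W$ is naturally $\nu = \nu_p \oplus p^*\nu_q$, with $\Gamma$-equivariant spin structure inherited from those on the summands, and the Riemannian structures for $p$ and $q$ combine along the splitting $\textrm{T}(X/Z) \cong \textrm{T}(X/Y) \oplus p^*\textrm{T}(Y/Z)$ into a Riemannian structure for $q \circ p$ with $\nabla^{X/Z} = \nabla^{X/Y} \oplus p^*\nabla^{Y/Z}$. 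I would then argue that the differential Thom class $\check{U}$ for $\nu$ produced by these data agrees, in $\check{K}^{2(N_p+N_q)}_{\Gamma,\textrm{c}}(\nu)$, with the product $\check{U}_p \cdot \widetilde{\check{U}}_q$, where $\widetilde{\check{U}}_q$ is the pullback of $\check{U}_q$ along the natural map $\nu \to \nu_q$ covering $p$. Curvature multiplicativity comes from the decomposition $S(\nu_p \oplus p^*\nu_q) \cong S(\nu_p) \hat{\otimes} p^*S(\nu_q)$ (so the Mathai--Quillen forms of Subsection 5.2 multiply) and from the multiplicativity of $\hat{\alpha}$; the classifying-map side is handled by the product $m_{i,j}$ of Subsection 4.2, with residual Chern--Simons discrepancies absorbed by Remark \ref{rem:cs}.

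With the Thom class factorization in hand, the two pushforwards can be compared directly. By definition, $q_! p_!(\check{x})$ is the result of multiplying $\check{x}$ by $\check{U}_p$ (after pullback to $\nu_p$), integrating over $W_p$ to reach $\check{K}^\bullet_\Gamma(Y)$, then multiplying by $\check{U}_q$ and integrating over $W_q$. A projection formula---checked on curvature forms via ordinary Fubini and on classifying maps via the desuspension construction of Subsection 5.3---lets one commute the multiplication by $\widetilde{\check{U}}_q$ past the integration over $W_p$. The Fubini identity for integration over $W_p \oplus W_q$, together with the factorization $a_g(W_p \oplus W_q) = a_g(W_p)\,a_g(W_q)$ coming from $S((W_p \oplus W_q)^{g\perp}) \cong S(W_p^{g\perp}) \hat{\otimes} S(W_q^{g\perp})$, then rewrites the iterated expression as $\psi_!(\check{U} \cdot \pi^*\check{x}) = (q \circ p)_!(\check{x})$.

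The main obstacle will be the bookkeeping required to upgrade the factorization of $\check{U}$ from the curvature level to full differential $K$-theory representatives: one must choose classifying maps, invariant subspaces $V \in \mathcal{G}_\Gamma$, and primitives $\sigma_V$ for the three Thom classes so that the product structure is exact on the nose modulo $\Omega^\bullet_\Gamma(X)_K$. Fortunately, the freedom granted by the corollary to Proposition \ref{prop:stokes} means we can select the three sets of auxiliary data simultaneously and compatibly; the technical content then amounts to stringing together the multiplicativity of the Mathai--Quillen form, the multiplicativity of $\hat{\alpha}$, the ring-structure machinery of Subsection 4.2, and the desuspension/Fubini compatibility of Subsection 5.3.
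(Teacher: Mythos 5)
Your proposal is correct and follows essentially the same route as the paper: build the composite orientation with $W = W_p \oplus W_q$, $\varphi = \varphi_p \times (\varphi_q \circ p)$, $\nu \cong \nu_p \oplus p^*\nu_q$, take the product of the two differential Thom classes as the Thom class for $q \circ p$, and reduce the identity to a projection formula (commuting multiplication by the pulled-back $\check{U}_q$ past integration over $W_p$) together with the iterated-integration/Fubini compatibility for $W_p \oplus W_q$. The only cosmetic difference is that the paper simply \emph{defines} the composite Thom class to be the product and invokes the corollary to Proposition \ref{prop:stokes}, whereas you propose to verify the multiplicativity of the constructed Thom class explicitly; both are fine.
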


\begin{proof} We begin by constructing the natural Riemannian structure for $q \circ p$. Let
\begin{displaymath}
P_1: \textrm{T}(X) \longrightarrow \textrm{T}(X/Y) \qquad \textrm{and} \qquad P_2: \textrm{T}(Y) \longrightarrow \textrm{T}(Y/Z)
\end{displaymath}
be the projection maps given by the respective Riemannian structures for $p$ and $q$. Notice that
\begin{displaymath}
\textrm{T}(X/Z) \cong \textrm{T}(X/Y) \oplus p^*\textrm{T}(Y/Z)
\end{displaymath}
Identify $p^*\textrm{T}Y$ with $\textrm{ker}(P_1)$; this identifies the pullback of $\textrm{T}(Y/Z)$ with a subspace of $\textrm{T}(X/Z)$. The metric on $\textrm{T}(X/Z)$ is then given by the metrics on $\textrm{T}(X/Y)$ and $\textrm{T}(Y/Z)$, and the projection
\begin{displaymath}
\textrm{T}X \longrightarrow \textrm{T}(X/Z)
\end{displaymath}
is given by its kernel, the kernel of the map $P_2 \circ dp$ restricted to $\textrm{ker}(P_1)$. This is the natural Riemannian structure for $q \circ p$.

Next, let
\begin{displaymath}
\Phi_1 = (W_1,\varphi_1,\nu_1,\check{U}_1) \qquad \textrm{and} \qquad \Phi_2 = (W_2,\varphi_2,\nu_2,\check{U}_2)
\end{displaymath}
be differential orientations for $p$ and $q$, respectively, with the given Riemannian structures. We use these data, together with the Riemannian structure for $q \circ p$, to construct a differential orientation for $q \circ p$. Set
\begin{equation}
W = W_1 \oplus W_2
\end{equation}
Then
\begin{equation}
\varphi = \varphi_1 \times (\varphi_2 \circ p)
\end{equation}
is an embedding of $X$ into $W$. The tubular neighborhood $\nu_2$ of $Y$ in $Z \times W_2$ gives a tubular neighborhood of $Y \times W_1$ in $Z \times W$. Putting this together with the neighborhood $\nu_1$ of $X$ in $Y \times W_1$, we obtain a tubular neighborhood $\nu$ of $X$ in $Z \times W$. Let $\phi_j$ be the projection of $\nu \cong \nu_1 \oplus \nu_2$ onto $\nu_j$, $j = 0,1$. The corresponding differential Thom class is
\begin{equation}
\check{U} = \phi_1^* \check{U}_1 \cdot \phi_2^* \check{U}_2
\end{equation}
These data comprise a differential orientation $\Phi = (W,\varphi,\nu,\check{U})$ for $q \circ p$.

Let $\pi_1$ be the projection map from $\nu_1$ onto $X$ and $\psi_1$ projection from $Y \times W_1$ onto $Y$; similarly, let $\pi_2$ be the projection map from $\nu_2$ onto $Y$ and $\psi_2$ projection from $Z \times W_2$ onto $Z$. We have the following commutative diagram of maps between manifolds:
\begin{equation}
\xymatrix{
X \ar[dr]_p & \nu_1 \ar[l]_{\pi_1} \ar[d]^{\psi_1} & \nu \ar[l]_{\phi_1} \ar[d]^{\phi_2} \\
& Y \ar[dr]_q & \nu_2 \ar[l]_{\pi_2} \ar[d]^{\psi_2} \\
& & Z }
\end{equation}
The data from $\Phi_1$ and $\Phi_2$ give rise to the diagram
\begin{equation}
\xymatrix{
\check{K}^{-i}_\Gamma(X) \ar[r]^\alpha & \check{K}^{-i+2M}_\Gamma(\nu_1) \ar[r]^\beta \ar[d]^\gamma & \check{K}^{-i+2(M+N)}_\Gamma(\nu) \ar[d]^\delta \\
& \check{K}^{-i-m}_\Gamma(Y) \ar[r]^\epsilon & \check{K}^{-i-m+2N}_\Gamma(\nu_2) \ar[d]^\zeta \\
& & \check{K}^{-i-m-n}_\Gamma(Z) }
\end{equation}
On the other hand, we use $\Phi$ to define
\begin{equation}
\xymatrix{
\qquad \check{K}^{-i}_\Gamma(X) \ar[r]^\theta & \check{K}^{-i+2(M+N)}_\Gamma(\nu) \ar[d]^\kappa \\
& \check{K}^{-i-m-n}_\Gamma(Z) } \qquad
\end{equation}
It is clear from our constructions that
\begin{equation}
\beta \circ \alpha = \theta
\end{equation}
Furthermore,
\begin{equation}
\zeta \circ \delta = \kappa
\end{equation}
follows from the definition of integration over a representation. It therefore remains to show that
\begin{equation}
\epsilon \circ \gamma = \delta \circ \beta
\end{equation}
This amounts to the observation that
\begin{equation}
\phi_{2!}(\phi_2^*\check{U}_2 \cdot \phi_1^*\check{x}) = \check{U}_2 \cdot \pi_2^*\,\psi_{1!}\check{x}
\end{equation}
We thus have
\begin{equation}
q_! \circ p_! = \zeta \circ \epsilon \circ \gamma \circ \alpha = \zeta \circ \delta \circ \beta \circ \alpha = \kappa \circ \theta = (q \circ p)_!
\end{equation}
which proves the proposition.\end{proof}

We have seen that the pushforward depends on the differential orientation only in the choice of Riemannian structure for $p: X \rightarrow Y$. Let us consider this dependence more closely, in the case that $Y$ is a point. Suppose, then, that $X$ is a Riemannian spin $\Gamma$-manifold; then pushforward gives a map from the differential $K$-theory of $X$ to that of a point. If $n = \textrm{dim}(X)$ is even, then the pushforward map
\begin{displaymath}
\check{K}^0_\Gamma(X) \longrightarrow \check{K}^{-n}(\textrm{pt})
\end{displaymath}
is independent of the metric. According to Proposition \ref{prop:point},
\begin{displaymath}
\check{K}^{-n}(\textrm{pt}) \cong K^{-n}(\textrm{pt}),
\end{displaymath}
so this pushforward is merely the topological pushforward given by the characteristic class. In the context of the geometric description of differential $K$-theory, it is just the index of a twisted Dirac operator. In general, however, the pushforward does depend on the metric. For instance, still considering the case $X \rightarrow \textrm{pt}$, if $X$ is odd-dimensional, then the pushforward map on $\check{K}^0_\Gamma(X)$ takes values in the torus $(R(\Gamma) \otimes \mathbb{R})/R(\Gamma)$, and depends on the metric.

Nevertheless, the pushforward is invariant under conformal variations of the metric:

\begin{prop} Suppose that $X$ is a Riemannian spin $\Gamma$-manifold of dimension $n$. Suppose that two $\Gamma$-invariant metrics $\gamma$, $\gamma'$ are conformally related, and let $p_!$, $p_!'$ denote the corresponding pushforward maps
\begin{displaymath}
\check{K}^{-i}_\Gamma(X) \longrightarrow \check{K}^{-i-n}_\Gamma(\textrm{\emph{pt}})
\end{displaymath}
Then $p_! = p_!'$. \end{prop}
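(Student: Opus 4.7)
The plan is to use the Stokes-type Proposition~\ref{prop:stokes} with a cobordism built from the conformal interpolation of $\gamma$ and $\gamma'$, and then analyze the resulting form on $\textrm{pt}$.

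Write $\gamma' = e^{2f}\gamma$ for a $\Gamma$-invariant smooth function $f$, and form the equivariant fiber bundle $C = X \times I \to \textrm{pt}$ equipped with the Riemannian structure whose underlying metric on $TC$ is $g_C = e^{2tf}(\gamma + dt^2)$. This metric is globally conformal to the product metric $\gamma + dt^2$, and restricts to $\gamma$ on $X \times \{0\}$ and to $\gamma'$ on $X \times \{1\}$, so $\partial C = X \sqcup \overline{X}$ makes $C$ an equivariant cobordism compatible with both Riemannian structures on $p$. Pulling $\check{x}$ back along the projection $\pi: C \to X$ yields an extension $\pi^*\check{x} \in \check{K}^{-i}_\Gamma(C)$. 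By Proposition~\ref{prop:stokes}, the difference $p_!(\check{x}) - p'_!(\check{x})$ is the image in $\check{K}^{-i-n}_\Gamma(\textrm{pt})$ of the form $\alpha \in \Omega^{-i-n-1}_\Gamma(\textrm{pt})$ whose $g$-component is
\begin{displaymath}
\alpha_g \;=\; \int_{C^g/\textrm{pt}} \hat{\alpha}_g\bigl(TC;\nabla^{g_C}\bigr) \wedge \pi^*\omega_g(\check{x}) \cdot u^{-l_g}.
\end{displaymath}

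Next I would split on the parity of $n+i$. When $n+i$ is even, the ambient space $\Omega^{-i-n-1}_\Gamma(\textrm{pt})$ already vanishes, since forms on a point are supported on even powers of the Bott class $u$ of degree two; hence $\alpha = 0$ and the result follows with no further work. When $n+i$ is odd, one has $\Omega^{-i-n-1}_\Gamma(\textrm{pt}) \cong R(\Gamma)\otimes\mathbb{R}$ and $\check{K}^{-i-n}_\Gamma(\textrm{pt}) \cong (R(\Gamma)\otimes\mathbb{R})/R(\Gamma)$, so the task reduces to showing that the class function $g \mapsto \alpha_g$ is a virtual character of $\Gamma$.

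This is the main obstacle. The plan is to recognize $\alpha_g$ as the interior contribution in the equivariant Atiyah-Patodi-Singer formula for the twisted spinor Dirac operator on the even-dimensional manifold-with-boundary $C^g$ carrying its conformally interpolated metric. Under APS, $\alpha_g$ differs from the integer-valued equivariant index by the reduced equivariant eta invariants $\xi_\Gamma$ of the boundary Dirac operators on $(X,\gamma)$ and $(X,\gamma')$. Conformal invariance of the reduced equivariant eta invariant modulo $R(\Gamma)$ then forces the boundary terms to cancel in $(R(\Gamma)\otimes\mathbb{R})/R(\Gamma)$, so $\alpha$ represents a virtual character and its image in $\check{K}^{-i-n}_\Gamma(\textrm{pt})$ vanishes. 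The analytic core of the argument is thus the equivariant APS theorem combined with the conformal covariance of the spinor Dirac operator, both standard tools but ones that must be applied with care in the present fixed-point-set setting.
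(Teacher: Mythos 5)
Your cobordism set-up is fine as far as it goes (the parity bookkeeping on the point is correct, and the use of Proposition~\ref{prop:stokes} on $X\sqcup\overline{X}$ with the interpolating metric parallels the paper's own corollary to that proposition), but the step you yourself flag as ``the main obstacle'' is where the argument genuinely breaks down. First, the appeal to the equivariant APS formula only makes sense when $\pi^*\omega(\check x)$ is the equivariant Chern--Weil form of an actual bundle with connection, i.e.\ essentially for degree-zero classes presented geometrically; for a general $\check x\in\check K^{-i}_\Gamma(X)$ of arbitrary degree, whose curvature is only cohomologous to a Chern character form (it differs by $d_\Gamma\eta$ for some $\eta$), there is no twisted Dirac operator on $C^g$ whose index density is your integrand, so the identification of $\alpha_g$ with ``interior term $=$ index $+$ eta terms'' is not available. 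Second, the external fact you lean on --- conformal invariance of the reduced equivariant eta invariant modulo $R(\Gamma)$ --- is not a free input here: granting Conjecture~\ref{conj:conjecture} it is essentially equivalent to the statement being proven, and its standard proof runs through exactly the Chern--Simons observation that Pontrjagin-type forms are \emph{pointwise} invariant under conformal change of the metric. Once you have that observation, your own computation collapses without any analysis: since $g_C=e^{2tf}(\gamma+dt^2)$ is conformal to the product metric, $\hat{\alpha}_g(\mathrm{T}C;\nabla^{g_C})$ equals the pullback from $X$ of $\hat{\alpha}_g(\mathrm{T}X;\nabla^{\gamma})$ (up to the flat $I$-factor), so $\hat{\alpha}_g(\mathrm{T}C;\nabla^{g_C})\wedge\pi^*\omega_g(\check x)$ is basic for $C\to X$ and its fiber integral over $I$ vanishes, giving $\alpha=0$ identically in every degree --- no APS, no eta invariants, no parity split. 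Without that observation, nothing in your sketch certifies that $\alpha$ lies in $R(\Gamma)$.

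For comparison, the paper's proof does not use the cobordism at all: it notes that the pushforward depends on the metric only through the Chern--Weil representatives entering the differential Thom class (the equivariant $\hat{\mathrm{A}}$-form and the associated $\hat{\mathrm{A}}$ Chern--Simons form), and then invokes the Chern--Simons conformal-invariance argument to conclude $\hat{\alpha}(\mathrm{T}X;\nabla')=\hat{\alpha}(\mathrm{T}X;\nabla)$ and $\mathrm{CS}_{\hat{\mathrm{A}},\Gamma}(\mathrm{T}X;\nabla,\nabla')=0$, so the Thom class, and hence $p_!$, is literally unchanged. If you want to salvage your route, replace the APS step by this pointwise conformal invariance of the equivariant $\hat{\mathrm{A}}$-form (which also needs a word of justification in the equivariant setting, since the form involves Pfaffian factors on the normal bundles to the fixed-point sets, not just Pontrjagin forms); but at that point the direct argument is both shorter and covers all degrees and all classes at once.
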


\begin{proof}  It was proven by Chern and Simons in \cite{CS} that the representatives for the Pontrjagin classes of the tangent bundle constructed via the Chern-Weil method are invariant under conformal variations. The same arguments may be used to show that the representatives that we have constructed in this paper---universal polynomials in Pontrjagin forms for the relative tangent bundle with coefficients depending rationally on the eigenvalues of the group action---are conformally invariant as well.  In particular, if $\nabla$ and $\nabla'$ denote the Levi-Civita connections for $\gamma$ and $\gamma'$, respectively, then
\begin{equation}
\hat{\alpha}\left(\textrm{T}X;\nabla'\right) = \hat{\alpha}\left(\textrm{T}X;\nabla\right)
\end{equation}
Similarly,
\begin{equation}
\textrm{CS}_{\hat{\textrm{A}},\Gamma}(\textrm{T}X;\nabla,\nabla') = 0
\end{equation}
It follows that the construction of the differential Thom class is invariant under conformal variations of the metric; but the pushforward map depends on the metric only in the construction of the Thom class.  \end{proof}


\section{The reduced eta invariant}

\subsection{A conjectured analytic formula}

Suppose that $X$ is a compact Riemannian $\Gamma$-manifold with $\Gamma$-invariant metric.  Let $\textrm{Cl}_X$ be the bundle of Clifford algebras associated to the cotangent bundle, and suppose that $M$ is a bundle of $\Gamma$-equivariant $\textrm{Cl}_X$-modules with compatible metric and covariant derivative.  The Dirac operator $A$ is defined as the composition
\begin{displaymath}
\Gamma(M) \stackrel{\nabla}{\longrightarrow} \Gamma(\textrm{T}^*X \otimes M) \stackrel{\textrm{Cliff}}{\longrightarrow} \Gamma(M)
\end{displaymath}
where the second map is Clifford multiplication.

Let $H$ be the Hilbert space of $\mathcal{L}^2$-sections of $M$ over $X$.  The spectrum of $A$ is real, discrete, and has no accumulation points, and the eigenspaces of $A$ are finite dimensional subspaces of $H$.  Let $Z \subset \mathbb{R}$ denote the spectrum of $A$, with each eigenvalue listed only once.  For each $\lambda \in Z$, let $H\lbrack \lambda \rbrack$ denote the corresponding $A$-eigenspace in $H$.  Notice that $H\lbrack\lambda\rbrack$ is a finite-dimensional $\Gamma$-representation; denote its character by $\chi_\lambda$.

Given $\alpha \in \mathbb{R} \setminus Z$, we would like to associate to $A$ the element of $R(\Gamma) \otimes \mathbb{R}$ which is half the zeta function regularization of the ``character'' of the virtual $\Gamma$-representation
\begin{displaymath}
\bigoplus_{\lambda > \alpha} H\lbrack\lambda\rbrack \ominus \bigoplus_{\lambda < \alpha} H\lbrack\lambda\rbrack
\end{displaymath}
For each $g \in \Gamma$, then, define
\begin{equation}
\xi_g(\alpha)\lbrack s \rbrack \equiv \frac{1}{2}\left(\sum_{\lambda \in Z\setminus\lbrace0\rbrace} \textrm{sign}(\lambda-\alpha)|\lambda|^{-s}
\chi_\lambda(g) - \textrm{sign}(\alpha)\chi_0(g)\right)
\end{equation}
The function $s \mapsto \xi(\alpha)\lbrack s \rbrack$ is the linear combination of finitely many zeta functions, weighted by the eigenvalues of $g$, and hence has a meromorphic continuation to $0$.  Let $\xi_g(\alpha)$ denote the value of this continuation, and let $\xi_\Gamma(\alpha)$ denote the class function given by $g \mapsto \xi_g(\alpha)$.  This is the regularization we seek.

Notice that $\xi_\Gamma(\alpha)$ is independent of $\alpha$ up to characters of a finite-dimensional representations: if $\alpha < \beta$, then
\begin{equation}
\xi_\Gamma(\alpha) - \xi_\Gamma(\beta) = \sum_{\lambda \in Z \cap (\alpha,\beta)} \chi_\lambda
\end{equation}
We may thus associate to $A$ a canonically defined element $\xi_\Gamma(A)$ of the torus $(R(\Gamma) \otimes \mathbb{R})/R(\Gamma)$.  Furthermore, as $A$ varies continuously, the $\xi(\alpha)$ are piecewise continuous with jumps in $R(\Gamma)$, so $\xi_\Gamma(A)$ may be conceived of as a continuous function on a family of Dirac operators $\lbrace A_t \rbrace_{t \in T}$ (cf. \cite{FM}).

Now suppose that $X$ is an odd-dimensional spin Riemannian $\Gamma$-manifold.  Let $S_X$ be the associated complex spinor bundle with induced connection.  Suppose $V$ is a $\Gamma$-equivariant vector bundle with invariant connection $\nabla^V$ that is compatibly unitary.  Form the twisted spinor bundle $S_X \otimes V \rightarrow X$ and let $D_V$ denote the twisted Dirac operator. We have the reduced eta invariant
\begin{displaymath}
\xi_\Gamma(D_V) \in (R(\Gamma) \otimes \mathbb{R})/R(\Gamma)
\end{displaymath}
On the other hand, the triple $(V,\nabla^V,0)$ represents a class $\check{x} \in \check{K}_\Gamma^0(X)$ under the geometric description of differential equivariant $K$-theory. Let $p: X \rightarrow \textrm{pt}$. Then the pushforward map
\begin{equation}
\wp_\Gamma \equiv p_!: \check{K}^0_\Gamma(X) \longrightarrow \check{K}^{-\textrm{dim}(X)}_\Gamma(\textrm{pt})
\end{equation}
also yields an element
\begin{displaymath}
\wp_\Gamma(\check{x}) \in (R(\Gamma) \otimes \mathbb{R})/R(\Gamma)
\end{displaymath}
We conjecture:

\begin{conj} \label{conj:conjecture} Suppose $X$ is a compact odd-dimensional Riemannian spin $\Gamma$-manifold; suppose that $V$ is a $\Gamma$-equivariant vector bundle with invariant connection $\nabla^V$ that is compatibly unitary, and let $\check{x}$ be the differential equivariant $K$-theory class represented by $(V,\nabla^V,0)$.  Then
\begin{equation}
\wp_\Gamma\left(\check{x}\right) = \xi_\Gamma(D_V)
\end{equation}
as class functions modulo $R(\Gamma)$. \end{conj}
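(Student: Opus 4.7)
The strategy I would pursue combines the equivariant Atiyah--Patodi--Singer index theorem with the Stokes-type result of Proposition~\ref{prop:stokes}. First, one checks that both $\check{x} \mapsto \wp_\Gamma(\check{x})$ and $\check{x} \mapsto \xi_\Gamma(D_V)$ descend to well-defined homomorphisms $\check{K}^0_\Gamma(X) \to (R(\Gamma) \otimes \mathbb{R})/R(\Gamma)$. Additivity of $\xi_\Gamma(D_V)$ in $V$ is immediate from the behavior of Dirac operators under direct sums. Invariance under the equivalence relations of the geometric description (Definition~\ref{defn:bigdef3}) requires the standard variation formula for the reduced eta invariant under a change of connection, whose local (Chern--Simons) term exactly cancels the shift in the third slot of a geometric representative; this reduces the conjecture to a statement about well-defined additive invariants.

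Next I would attack the case in which $(X, V, \nabla^V)$ bounds an even-dimensional spin $\Gamma$-manifold $(C, W, \nabla^W)$ with product structure near the boundary. The equivariant APS index theorem for the twisted Dirac operator $D_W$ on $C$ reads, modulo $R(\Gamma)$,
\begin{equation*}
\mathrm{ind}_\Gamma(D_W) \equiv \sum_{g \in \Gamma} \int_{C^g} \hat{\alpha}_g\bigl(\mathrm{T}C; \nabla^C\bigr) \wedge \omega_g\bigl(W; \nabla^W\bigr) \cdot u^{-l_g} \ - \ \xi_\Gamma(D_V),
\end{equation*}
while Proposition~\ref{prop:stokes}, applied to $p: X \to \mathrm{pt}$ extending through $p: C \to \mathrm{pt}$, expresses $\wp_\Gamma(\check{x})$ as precisely the same integral of $\hat{\alpha}_g \wedge \omega_g(\check{c})$ modulo $K^{-1}_\Gamma(\mathrm{pt}) = 0$. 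Since $\mathrm{ind}_\Gamma(D_W) \in R(\Gamma)$ vanishes in the quotient, the identity $\wp_\Gamma(\check{x}) = \xi_\Gamma(D_V)$ follows in the boundary case.

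The reduction of the general case to the boundary case is the main obstacle. Klonoff's non-equivariant argument produces, after stabilization and perhaps after crossing with a mapping-torus construction, an explicit bordism witnessing that every pair $(X, V)$ becomes a boundary in $\check{K}^0(X)$; this works because the relevant spin bordism group of $\mathrm{BU}$ is rationally tractable. In the equivariant setting one is forced to work with equivariant spin bordism over a classifying space for $K_\Gamma^0$, which is substantially more delicate. My plan is to reduce to the fixed-point strata individually: apply the Atiyah--Segal localization (Theorem~\ref{thm:localization}) to identify the target torus with fixed-point data, run Klonoff's argument on each $X^g$ separately to bound it (after stabilization) in some $C^g$, and then glue these bordisms equivariantly using the $\Gamma$-invariant normal bundle data of $X^g \subset X$. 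The delicate point, which I expect to be the real technical obstacle, is ensuring that the spin structures, connections, and equivariant Thom classes on the various strata match up compatibly with the centralizer action so as to produce a single global equivariant bordism rather than just a collection of pieces.

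As a sanity check I would verify naturality under restriction along subgroups $H \hookrightarrow \Gamma$: both sides are compatible with such restriction, which reduces the conjecture for general $\Gamma$ to its statement for each cyclic subgroup $\langle g \rangle$. For cyclic actions whose fixed-point set consists of isolated points the fixed-point contributions become explicit, giving a base case accessible by direct analytic computation and hopefully an inductive handle on the stratification.
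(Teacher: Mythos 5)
Your boundary-case argument is essentially the paper's Proposition~\ref{prop:boundary}: one combines Donnelly's equivariant APS formula (Lemma~\ref{lem:integrand}) with Proposition~\ref{prop:stokes} to get $\wp_\Gamma(\check{x}) = \xi_\Gamma(D_V) + \textrm{ind}_\Gamma(D_W^+)$, and the index term vanishes modulo $R(\Gamma)$. Two small imprecisions: the APS/Donnelly identity is an exact equality in $R(\Gamma)\otimes\mathbb{R}$, not a congruence modulo $R(\Gamma)$; and both sides are class functions, so the outer $\sum_{g\in\Gamma}$ in your display should be read as the assignment $g\mapsto(\cdots)$ rather than a literal sum.

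The reduction of the general case has a genuine gap, and in fact the paper does not prove the full conjecture; it establishes only the boundary case, the case $\Gamma=\{e\}$ (Proposition~\ref{prop:ordinary}, via Klonoff's Lemma~\ref{lem:ks} and the Fubini result Proposition~\ref{prop:fubini}), and the free case (Proposition~\ref{prop:free2}), and explicitly records that a general reduction to the boundary case remains open. Your plan to ``run Klonoff's argument on each $X^g$ separately and then glue the bordisms equivariantly'' does not make sense as stated. The $X^g$ are nested submanifolds of $X$ ($X^e=X$ contains all of them and they intersect along deeper strata); they are not independent pieces. Producing, for each $g$, some bordism $C_g$ of $X^g\times(\mathbb{CP}^1)^{s_g}$ yields a collection of unrelated manifolds, and there is no gluing procedure that assembles these into a single $\Gamma$-manifold $C$ with $\partial C = X\times(\mathbb{CP}^1)^s$ whose $g$-fixed set reproduces $C_g$, let alone one carrying a compatible $\Gamma$-spin structure and a $\Gamma$-equivariant extension of $V$. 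This is a category error about the relationship between an equivariant bordism and its fixed-point strata, not merely a delicate matching of Thom classes. Note also that the paper's free-case argument is not a localization argument at all: it passes to the quotient via $\check{K}_\Gamma^0(X)\cong\check{K}^0(X/\Gamma)$ (Proposition~\ref{prop:free}), decomposes $\xi_\Gamma(D_V)$ over the flat bundles $V_\chi\to X/\Gamma$ associated to irreducible representations, and then invokes Klonoff's nonequivariant theorem downstairs.

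The proposed sanity check via restriction to cyclic subgroups also cannot serve as a reduction. Naturality gives you that the conjecture for $\Gamma$ implies it for each $\langle g\rangle$, not conversely; to go the other way you would need the map $(R(\Gamma)\otimes\mathbb{R})/R(\Gamma) \to \prod_g (R(\langle g\rangle)\otimes\mathbb{R})/R(\langle g\rangle)$ to be injective, and by Brauer's characterization of characters the correct test family is (Brauer) elementary subgroups, not cyclic ones; a class function can restrict to a virtual character on every cyclic subgroup without being a virtual character of $\Gamma$. So verifying the formula on cyclic subgroups would give a necessary consistency, not a proof.
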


We can prove this conjecture in the case that $(X,V)$ is a boundary in an appropriate sense; then it is a consequence of Proposition~\ref{prop:stokes} and the formulae given in \cite{APS1} and \cite{D2}.  On the other hand, Klonoff has proven the conjecture in complete generality in \emph{ordinary} differential $K$-theory---the case in which $\Gamma$ is trivial---by a reduction to the boundary case. The extension of this proof to differential equivariant $K$-theory in the case of a free action is relatively straightforward. However, a reduction to the boundary case for differential equivariant $K$-theory in general remains to be established.

We proceed to these partial proofs.

\subsection{Proof for the boundary case}

Let $X$, $V$, $\nabla^V$ be as in the foregoing.  Assume for now that $X$ is the boundary of a Riemannian spin $\Gamma$-manifold $C$ and that, near the boundary, the metric on $C$ is a product and $\Gamma$-action amounts to the action on $X \times I$.  Assume further that $V$ and its connection are the restriction to $\partial C$ of a $\Gamma$-equivariant vector bundle $W$ over $Y$ with compatibly unitary, invariant connection $\nabla^W$.  Form the graded spinor bundle $S_C = S^+_C \oplus S^-_C \rightarrow C$ with compatible metric and connection, such that $S^\pm_C$ restrict to $S_X$ on $\partial C$ under the isomorphism
\begin{displaymath}
S_C^\pm|_{\partial C} \stackrel{\sim}{\longrightarrow} S_C^\mp|_{\partial C}
\end{displaymath}
given by Clifford multiplication by the outward-pointing unit normal vector $\partial_u$.  Let $D_W$ denote the twisted Dirac operator:
\begin{displaymath}
D^\pm_W: \Gamma(S^\pm_C \otimes W) \longrightarrow \Gamma(S^\mp \otimes W)
\end{displaymath}
We assume that $D_W$ decomposes near the boundary as
\begin{displaymath}
D_W^\pm = \partial_u + D_V
\end{displaymath}
Let $P \in \textrm{End}(\Gamma(S_X \otimes V))$ be projection onto the sum of eigenspaces of $D_V$ with positive eigenvalue; let $\Gamma(S_C^+ \otimes W;P)$ be the space of sections $s$ satisfying the boundary condition $P(s|_X) = 0$, and let $\Gamma(S_C^- \otimes W;1-P)$ be the space of sections satisfying the adjoint boundary condition.  Consider the restricted operators
\begin{displaymath}
\bar{D}_W^+: \Gamma(S_C^+ \otimes W;P) \longrightarrow \Gamma(S_C^- \otimes W)
\end{displaymath}
\begin{displaymath}
\bar{D}_W^-: \Gamma(S_C^- \otimes W;1-P) \longrightarrow \Gamma(S_C^+ \otimes W)
\end{displaymath}
and let $\textrm{ind}_\Gamma(D_W^+)$ be the character of the corresponding virtual representation
\begin{displaymath}
\textrm{ker}\left(\bar{D}_W^+\right) \ominus \textrm{ker}\left(\bar{D}_W^-\right)
\end{displaymath}

\begin{lem} \label{lem:integrand} We have
\begin{equation}
\textrm{\emph{ind}}_g\left(D^+_W\right) = \int_{C^g}\hat{\alpha}_g\left(\textrm{\emph{T}}C;\nabla^\textrm{\emph{lc}}\right) \wedge \omega_g\left(W;\nabla\right) - \xi_g(D_V)
\end{equation}
where $\nabla^\textrm{\emph{lc}}$ is the Levi-Civita connection for $\textrm{\emph{T}}C$. \end{lem}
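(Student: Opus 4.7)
The plan is to recognize Lemma~\ref{lem:integrand} as a direct instance of the equivariant Atiyah-Patodi-Singer index theorem with the global APS boundary condition given by the projection $P$ onto the positive part of the spectrum of $D_V$. The hypotheses have been arranged precisely so that this theorem applies: the metric is a product near $\partial C = X$, $\Gamma$-action near the boundary is pulled back from $X$ under the collar identification, the operator $D_W$ splits as $\partial_u + D_V$, and the domain of $\bar{D}_W^+$ is exactly the APS domain. Thus the $g$-equivariant index of $\bar{D}_W^+$ decomposes as an interior local term plus a boundary correction given by the equivariant reduced eta invariant.

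First I would appeal to Donnelly's equivariant extension of APS in \cite{D2} (reducing to \cite{APS1} in the case $g = e$) to obtain the identity
\begin{displaymath}
\textrm{ind}_g(D_W^+) = \int_{C^g} I_g(C, W) - \xi_g(D_V),
\end{displaymath}
where $I_g(C,W)$ is the local equivariant index density at $g$, and the boundary term is exactly the equivariant reduced eta invariant of $D_V$ as defined in Subsection~6.1. The definition of $\xi_g(\alpha)$ we adopted matches Donnelly's up to the standard $\tfrac{1}{2}(\eta + h)$ convention, and the $g = e$ component is simply the ordinary APS eta invariant.

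Next I would identify the local density $I_g(C,W)$ with the product $\hat{\alpha}_g(\textrm{T}C; \nabla^{\textrm{lc}}) \wedge \omega_g(W; \nabla)$ from Subsection~5.2. The Chern-Weil representative $\omega_g(W;\nabla)$ was defined in Section 2.1, so the identification with the twisted Chern character density appearing in Donnelly's formula is direct. For the $\hat{A}$-factor, one uses the splitting $\textrm{T}C|_{C^g} = \textrm{T}C^g \oplus \textrm{T}C^{g\perp}$ and the explicit formula
\begin{displaymath}
\hat{\alpha}_g(\textrm{T}C;\nabla^{\textrm{lc}}) = \varepsilon_g\,i^{-k_g}\,\frac{\hat{\alpha}(\textrm{T}C^g; \nabla^{C^g})}{\textrm{Pf}(\textrm{id} - g \cdot \exp\{\tfrac{i}{2\pi}\Omega^{\textrm{T}C^{g\perp}}u^{-1}\})};
\end{displaymath}
this matches Donnelly's twist denominator coming from the normal bundle action of $g$.

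The main obstacle is bookkeeping of the various normalization conventions: the Bott class $u$ inserted in the Chern character exponential, the orientation sign $\varepsilon_g$, and the powers of $i$ that appear when translating Donnelly's conventions (which are phrased in ordinary cohomology with standard Chern-Weil forms) into the total-degree, Bott-twisted notation for $\Omega^\bullet_\Gamma(X)$ introduced in Section~2.1. Once these are reconciled carefully on a single fixed-point component $C^g$, multiplicativity of $\hat{\alpha}_g$ and of $\omega_g$ under the tangential/normal splitting finishes the proof.
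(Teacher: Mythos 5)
Your proposal takes essentially the same route as the paper: appeal to Donnelly's equivariant APS theorem in \cite{D2} for the structural identity $\textrm{ind}_g(D_W^+) = \int_{C^g} I_g - \xi_g(D_V)$, then identify the local density $I_g$ with $\hat{\alpha}_g(\textrm{T}C;\nabla^{\textrm{lc}}) \wedge \omega_g(W;\nabla)$. One point to make explicit at the identification step: Donnelly and Patodi compute the index density in closed form only for the signature operator, so the paper does not match the integrand directly against Donnelly's formulas but instead appeals to locality of the index density together with the closed-manifold equivariant Dirac index computation of \cite[$\S$6.4]{BGV}; your ``matches Donnelly's twist denominator'' step should be replaced by that locality-plus-BGV argument.
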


\begin{proof} Let $k_t$ be the smoothing kernel for the operator $\textrm{exp}(-tD_W^2)$ on the space $\Gamma(S_C \otimes W)$, that is, a section of the exterior tensor product
\begin{displaymath}
(S_C \otimes W)^* \boxtimes (S_C \otimes W) \longrightarrow C \times C
\end{displaymath}
so that, at each $c \in C$,
\begin{displaymath}
\left(\textrm{exp}\left(-tD_W^2\right)s\right)(y) = \int_C k_t(x,y)s(x)\cdot\textrm{vol}_C(x)
\end{displaymath}
for each smooth section $s$ of $S_C \otimes W$.  The kernel for $g \cdot \textrm{exp}(-tD_W^2)$ is given by
\begin{displaymath}
\left(g \cdot \textrm{exp}\left(-tD_W^2\right)s\right)(c) = \int_C g^*(k_t(x,g \cdot c)s(x))\cdot\textrm{vol}_C(x)
\end{displaymath}
Donnelly in \cite{D2} gives the following formula for the asymptotic expansion:
\begin{equation}
\int_C \textrm{tr}_\textrm{s}\lbrace g^*k_t(x,g \cdot x) \rbrace \,\textrm{vol}_C(x) \sim \frac{1}{(4\pi t)^{n/2}}\sum_{j=0}^\infty t^j\int_{C^g}b_{g,j}(x)\cdot\textrm{vol}_{C^g}(x)
\end{equation}
The main theorem of \cite{D2} yields
\begin{equation} \label{equation:john}
\textrm{ind}_g\left(D_W^+\right) =  \left(\frac{1}{(4\pi)^{n/2}}\int_{C^g} b_{g,n/2}(x)\cdot\textrm{vol}_{C^g}(x)\right) - \xi_g(D_V)
\end{equation}
In order to derive an expression for this integral it is necessary to identify the integrand as a polynomial of characteristic forms.  The papers \cite{D1}, \cite{D2}, and \cite{DP} of Donnelly and Patodi handle this problem in some detail, but in the case of the signature operator. However, the computation of the integrand is a local problem, so the formula for the $\Gamma$-index derived in \cite[$\S6.4$]{BGV} for the case $\partial C = \varnothing$ applies to the general case as well. The integrand there computed coincides with our
\begin{displaymath}
\hat{\alpha}_g\left(\textrm{T}C;\nabla^\textrm{lc}\right) \wedge \omega_g\left(W;\nabla\right)
\end{displaymath}
The proof follows.  \end{proof}

\begin{prop} \label{prop:boundary} If a compact odd-dimensional Riemannian spin $\Gamma$-manifold $X$ is the boundary of a spin Riemannian $\Gamma$-manifold $C$ such that, near the boundary, the metric on $C$ is a product, and $\Gamma$-action amounts to the action on $X \times I$, then Conjecture~\ref{conj:conjecture} holds.
\end{prop}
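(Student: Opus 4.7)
The plan rests on three ingredients: the geometric description of Proposition~\ref{prop:geomdesc}, which produces a natural extension of $\check{x}$ across $C$; Proposition~\ref{prop:stokes}, which expresses the pushforward of an extendable class as an explicit integral over $C^g$; and Lemma~\ref{lem:integrand}, which identifies that same integral with $\xi_g(D_V)$ modulo the character of a virtual $\Gamma$-representation.

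First, I would observe that under the geometric description, $\check{x}$ is represented by the triple $(V,\nabla^V,0)$, and this triple extends immediately to $(W,\nabla^W,0)$, which represents a class $\check{c}\in\check{K}^0_\Gamma(C)$ with curvature $\omega(W;\nabla^W)$ whose restriction to $X$ is $\check{x}$. Applying Proposition~\ref{prop:stokes} with $Y=\textrm{pt}$ and $p:C\to\textrm{pt}$ (the relative tangent bundle being $\textrm{T}C$ with its Levi-Civita connection), I would conclude that $\wp_\Gamma(\check{x})$ is the image in $\check{K}^{-\dim X}_\Gamma(\textrm{pt})$ of the class function whose value at $g\in\Gamma$ is
\begin{displaymath}
\int_{C^g}\hat{\alpha}_g\bigl(\textrm{T}C;\nabla^\textrm{lc}\bigr)\wedge\omega_g(W;\nabla^W),
\end{displaymath}
interpreted inside $(R(\Gamma)\otimes\mathbb{R})/R(\Gamma)$ via the isomorphism of Proposition~\ref{prop:point}.

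Next, I would invoke Lemma~\ref{lem:integrand}, which rewrites this same integral as $\xi_g(D_V)+\textrm{ind}_g(D_W^+)$. Because $\textrm{ind}_g(D_W^+)$ is the character of a genuine virtual $\Gamma$-representation and therefore lies in $R(\Gamma)$, it vanishes in the quotient, leaving the desired identity $\wp_\Gamma(\check{x})=\xi_\Gamma(D_V)$.

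The hard part will be verifying that all the normalizations agree on the nose: the factor $a_g(W)$ in the definition of integration over a $\Gamma$-representation, the power $u^{-l_g}$ appearing in Proposition~\ref{prop:stokes}, and the Bott identification $\check{K}^{-\dim X}_\Gamma(\textrm{pt})\cong(R(\Gamma)\otimes\mathbb{R})/R(\Gamma)$ need to combine so that the form produced by the Stokes step is presented exactly in the form that Donnelly's heat-kernel computation produces. Any sign or $u$-power mismatch must be absorbed into these identifications, lest the integer contribution $\textrm{ind}_g(D_W^+)$ fail to vanish cleanly in $R(\Gamma)$.
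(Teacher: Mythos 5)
Your proposal is correct and follows essentially the same route as the paper: extend $(V,\nabla^V,0)$ to $(W,\nabla^W,0)$ over $C$, apply Proposition~\ref{prop:stokes} with $Y=\textrm{pt}$ to express $\wp_g(\check{x})$ as $\int_{C^g}\hat{\alpha}_g(\textrm{T}C;\nabla^{\textrm{lc}})\wedge\omega_g(W;\nabla^W)$, and then use Lemma~\ref{lem:integrand} to identify this with $\xi_g(D_V)+\textrm{ind}_g(D_W^+)$, the index term vanishing modulo $R(\Gamma)$. The normalization checks you flag are real but routine, and the paper treats them the same way.
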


\begin{proof} If $\check{c} \in \check{K}_\Gamma^0(C)$ restricts to $\check{x}$ on $X$, then, by Proposition~\ref{prop:stokes} and Lemma~\ref{lem:integrand},
\begin{equation}
\xi_g(D_V) + \textrm{ind}_g(D_W^+) = \int_{C^g} \hat{\alpha}_g\left(\textrm{T}C;\nabla^\textrm{lc}\right) \wedge \omega_g(W;\nabla^W) = \wp_g(\check{x})
\end{equation}
It follows that $\xi_\Gamma(D_V)$ differs from $\wp_\Gamma(\check{x})$ by a character, namely, $\textrm{ind}_\Gamma(D_V)$, so they are equal in $(R(\Gamma) \otimes \mathbb{R})/R(\Gamma)$.  \end{proof}

\subsection{A Fubini theorem}

Before proceeding to Klonoff's proof of Conjecture \ref{conj:conjecture} for ordinary differential $K$-theory, we prove a ``Fubini theorem'' for pushforward to the differential equivariant $K$-theory of a point.

\begin{prop} \label{prop:fubini} Suppose that $X_1$, $X_2$, are two compact spin $\Gamma$-manifolds with $\Gamma$-invariant Riemannian structure. If $\check{x}_j \in \check{K}_\Gamma^0(X_j)$, $j = 1,2$, then
\begin{equation}
\wp_\Gamma(\phi_1^*\check{x}_1 \cdot \phi_2^* \check{x}_2) = \wp_\Gamma(\check{x}_1) \cdot \wp_\Gamma(\check{x}_2)
\end{equation}
where $\phi_j$ is the projection map from $X_1 \times X_2$ onto $X_j$. \end{prop}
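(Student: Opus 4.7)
The plan is to factor the pushforward through an intermediate stage and to invoke a projection formula together with a base-change identity. By Proposition~\ref{prop:composition}, if $p: X_1 \times X_2 \to \textrm{pt}$ is equipped with the Riemannian structure induced from that of $\phi_1: X_1 \times X_2 \to X_1$ (with product fiber metrics) and from $p_1: X_1 \to \textrm{pt}$, then $\wp_\Gamma = (p_1)_! \circ (\phi_1)_!$. The fibration $\phi_1$ moreover carries a natural differential orientation built from the orientation data $(W_2,\varphi_2,\nu_2,\check{U}_2)$ chosen for $p_2: X_2 \to \textrm{pt}$: embed via $\textrm{id}_{X_1}\times\varphi_2$, take normal bundle $X_1\times\nu_2$, and use the Thom class pulled back from $\check{U}_2$ along the projection to $\nu_2$.

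With this choice, the argument reduces to two intermediate identities. The first is a projection formula
\[
(\phi_1)_!\bigl(\phi_1^*\check{a}\cdot\check{b}\bigr) \;=\; \check{a}\cdot(\phi_1)_!\check{b},\qquad \check{a}\in\check{K}_\Gamma^\bullet(X_1),\ \check{b}\in\check{K}_\Gamma^\bullet(X_1\times X_2).
\]
The second is a base-change identity
\[
(\phi_1)_!\bigl(\phi_2^*\check{x}_2\bigr) \;=\; p_1^*\,\wp_\Gamma(\check{x}_2),
\]
which is essentially tautological once the orientation for $\phi_1$ has been fixed as above, since all ingredients are independent of the $X_1$-direction and integration along the $W_2$-fibers is performed exactly as in the construction of $\wp_\Gamma(\check{x}_2)$, uniformly over $X_1$. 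Granting these, the Fubini identity follows by:
\begin{align*}
\wp_\Gamma(\phi_1^*\check{x}_1\cdot\phi_2^*\check{x}_2)
&= (p_1)_!\bigl((\phi_1)_!(\phi_1^*\check{x}_1\cdot\phi_2^*\check{x}_2)\bigr)\\
&= (p_1)_!\bigl(\check{x}_1\cdot(\phi_1)_!\phi_2^*\check{x}_2\bigr)\\
&= (p_1)_!\bigl(\check{x}_1\cdot p_1^*\wp_\Gamma(\check{x}_2)\bigr)\\
&= (p_1)_!\check{x}_1\cdot\wp_\Gamma(\check{x}_2) \;=\; \wp_\Gamma(\check{x}_1)\cdot\wp_\Gamma(\check{x}_2),
\end{align*}
the final equality being a second application of the projection formula, this time to $p_1:X_1\to\textrm{pt}$ and the constant class $\wp_\Gamma(\check{x}_2)$ on the point.

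The main obstacle will be the projection formula. At the level of quadruple representatives $(W,f,\eta,\omega)$, the product of Section~4.2 introduces Chern-Simons correction terms $\kappa^{-i,-j}_{W,W'}$, which must be shown to pass correctly through multiplication by the differential Thom class and fiberwise integration. The controlling observation is that the representative for $\phi_1^*\check{a}$ is pulled back from $X_1$ and the differential Thom class on $X_1\times\nu_2$ is pulled back from $\nu_2$, so integration along the $W_2$-fibers acts only on the factor coming from $\check{b}$; the $\kappa$-terms arising in the product should then either factor as $\phi_1^*(\cdots)$ outside the fiber integral, or differ from such by forms in $\Omega_{\Gamma,\textrm{rd}}^\bullet(-)_K$, which are killed on passage to differential $K$-theory by Remark~\ref{rem:cs}. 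Once this is verified for $\phi_1$, the second instance of the projection formula (for $p_1$) is obtained by the same argument applied to a trivial base, or alternatively by reading the statement for $p_1$ as the special case $X_2 = \textrm{pt}$ of the formula just established.
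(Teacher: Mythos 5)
Your high-level strategy --- factor $\wp_\Gamma$ on $X_1\times X_2$ through $\phi_1$ via Proposition~\ref{prop:composition}, then invoke a projection formula and a base-change identity --- is the standard abstract route to a Fubini statement, and it is genuinely different from what the paper does. But as written it has a real gap: the projection formula $(\phi_1)_!(\phi_1^*\check{a}\cdot\check{b})=\check{a}\cdot(\phi_1)_!\check{b}$ for the \emph{differential} pushforward is nowhere established in the paper (nor is module-linearity of $(p_1)_!$ over $\check{K}_\Gamma^\bullet(\mathrm{pt})$, which you use in the last step), and it is not a formality in the quadruple model. You yourself flag it as ``the main obstacle,'' but the heuristic you offer is too optimistic: the correction term $(f\times f')^*\kappa^{-i,-j}_{W,W'}$ depends jointly on both classifying maps and does not factor through $\phi_1$, so after integrating along the $W_2$-fibers you must compare it with the $\kappa$-term appearing in the product $\check{x}_1\cdot p_1^*\wp_\Gamma(\check{x}_2)$; this comparison requires a homotopy/Chern--Simons argument of its own, and it is exactly where the content lies. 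In the paper's proof this is handled by a device special to the situation: after reducing to representatives of $\pi_j^*\check{x}_j\cdot\check{U}_j$ one may take the classifying map of the odd-dimensional factor to be \emph{constant} (its topological pushforward vanishes), so that $f_1^*\tilde{\omega}_{V_1}=0$, and the fiber integral of $F^*\tilde{\omega}_{L(V_1\otimes V_2)}$ is computed explicitly from the identity $0=F^*\tilde{\omega}_{L(V_1\otimes V_2)}+d_\Gamma(f_1\times F_2)^*\kappa$; the statement then reduces to the Fubini theorem for differential forms. Asserting the projection formula without such an argument effectively assumes a statement of the same order of difficulty as the proposition itself.

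A second omission: you never engage with the parity issues in the definition of integration along a $\Gamma$-representation, which force the paper to treat the even--even, odd--even, and odd--odd cases separately (in the odd--odd case both sides are shown to vanish, using that the curvatures $\omega(\check{U}_j)$ are even forms on odd-dimensional $W_j$). Since the differential pushforward in odd total degree is defined purely through the $\eta$-form and lands in the torus $(R(\Gamma)\otimes\mathbb{R})/R(\Gamma)$, any projection-formula or base-change argument must be checked separately in that regime; your uniform treatment glosses over precisely the case in which the statement has nontrivial content. To make your route work you would need to prove the projection formula at the level of quadruples (controlling the $\kappa$-terms modulo $\Omega_{\Gamma}^\bullet(-)_K$ and exact forms, in both parities); at that point you would essentially have reproduced the paper's computation, so the abstract detour buys little here.
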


\begin{proof}  If $X_1$ and $X_2$ are both even-dimensional, then the pushforward is given by the topological pushforward of the characteristic classes, and the proof follows from the Fubini theorem for ordinary equivariant $K$-theory. We assume, therefore, that $X_1$ is odd-dimensional  and $X_2$ even-dimensional.

Now suppose $\textrm{dim}(X)$ is odd and $\textrm{dim}(Y)$ is even.  Choose differential equivariant $K$-theory orientations $\Phi_1 = (W_1,\varphi_1,\nu_1,\check{U}_1)$ and $\Phi_2 = (W_2,\varphi_2,\nu_2,\check{U}_2)$ for $X_1$ and $X_2$, respectively.  Let $(V_1,f_1,\eta_1,\omega_1)$ be a quadruple representing $\pi_1^*\check{x}_1 \cdot \check{U}_1$, where $f_1$ is the constant map; then
\begin{equation}
\wp_g(\check{x}_1) = a_g(W_1) \int_{W_1^g}\eta_{1g} \cdot u^{m_{1g}}
\end{equation}
where $a_g(W_1)$ is defined as in (\ref{eq:aodd}). On the other hand, if $(V_2,f_2,\eta_2,\omega_2)$ is a quadruple representing $\pi_2^*\check{x}_2 \cdot \check{U}_2$, then
\begin{equation}
\wp_g(\check{x}_2) = a_g(W_2) \int_{W_2^g}\omega_{2g} \cdot u^{m_{2g}}
\end{equation}
where $a_g(W_2)$ is defined as in (\ref{eq:aeven}). Let $W = W_1 \oplus W_2$.  Then the product $\check{U}$ of $\check{U}_1$ and $\check{U}_2$, pulled back to $W$ via the natural projection maps, is a differential equivariant Thom form for the induced embedding $X \times Y \hookrightarrow W$.  We use this differential orientation to compute the pushforward of $\phi_1^*\check{x}_1 \cdot \phi_2^* \check{x}_2$.  The class  $\pi^*(\phi_1^*\check{x}_1 \cdot \phi_2^* \check{x}_2) \cdot \check{U}$ is represented by the quadruple
\begin{displaymath}
\left[ \begin{array}{c}
L(V_1 \otimes V_2)\\
m \circ (f_1 \times f_2)\\
f_1^*\tilde{\omega}_{V_1} \wedge \eta_2 + \eta_1 \wedge \omega_2 + (f_1 \times f_2)^*\kappa\\
\omega_1 \wedge \omega_2 \end{array} \right]
\end{displaymath}
where $L$, $m \equiv m_{0,0}$, and $\kappa \equiv \kappa^{0,0}_{V_1,V_2}$ are defined as in the discussion of ring structure.  Let $F$ be a homotopy from $m \circ (f_1 \times f_2)$ to the constant map $i$, constructed from a homotopy $F_2$ from $f_2$ to $i$.  Since $\textrm{dim}(W)$ is odd,
\begin{equation}
\wp_g(\phi_1^*\check{x}_1 \cdot \phi_2^* \check{x}_2) = a_g(W) \int_{W^g} \left( \eta_g - \int_{I \times W^g/W^g} (F^*\tilde{\omega}_{L(V_1 \otimes V_2)})_g \right) \cdot u^{m_{1g}+m_{2g}}
\end{equation}
It may easily be checked that $a_g(W) = a_g(W_1) \cdot a_g(W_2)$.  Now, $f_1$ is a constant map, so $f_1^*\tilde{\omega}_{V_1} = 0$ and the first term of $\eta_g$ is zero.  On the other hand,
\begin{equation}
0 = f_1^*\tilde{\omega}_{V_1} \wedge F_2^*\tilde{\omega}_{V_2} = F^*\tilde{\omega}_{L(V_1 \otimes V_2)} + d_\Gamma (f_1 \times F_2)^*\kappa
\end{equation}
It follows that
\begin{equation}
\int_{I \times W^g/W^g} F^*(\tilde{\omega}_{L(V_1 \otimes V_2)})_g = -\int_{I \times W^g/W^g} d(f_1 \times F_2)^*\kappa_g = (f_1 \times f_2)^*\kappa_g
\end{equation}
Thus,
\begin{equation}
\wp_g(\phi_1^*\check{x} \cdot \phi_2^* \check{y}) = a_g(W) \int_{W^g} \eta_{1g} \wedge \omega_{2g} \cdot u^{m_{1g}+m_{2g}}
\end{equation}
The proof follows from an application of the Fubini Theorem for integration of forms.

Finally, suppose that $\textrm{dim}(X)$ and $\textrm{dim}(Y)$ are both odd.  Choose $\Phi_1$, $\Phi_2$ as before.  Since the $W_j$ are odd-dimensional and the $\omega(\check{U}_j)$ are even forms, it follows from the Fubini Theorem for differential forms that
\begin{equation}
\int_{W_1 \oplus W_2} \omega(\check{U}_1 \cdot \check{U}_2) = \int_{W_1} \omega(\check{U}_1) \cdot \int_{W_2} \omega(\check{U}_2) = 0
\end{equation}
This completes the proof.  \end{proof}

\subsection{Reduction to the boundary case for $\Gamma$ trivial}

Recall that the ordinary differential $K$-theory ring for a smooth manifold $X$ is defined as $\check{K}^\bullet(X) \equiv \check{K}_{\lbrace e \rbrace}^\bullet(X)$, where ${\lbrace e \rbrace}$ denotes the trivial group. If $X$ is spin, then we set $\wp \equiv \wp_{\lbrace e \rbrace}$.

\begin{prop} \label{prop:ordinary} Conjecture~\ref{conj:conjecture} holds for ordinary differential $K$-theory. \end{prop}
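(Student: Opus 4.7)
The approach, due to K. Klonoff, is to reduce Conjecture~\ref{conj:conjecture} in the case of trivial $\Gamma$ to the boundary case established in Proposition~\ref{prop:boundary}. I would carry it out in three stages.

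First, I would show that the difference $\wp(\check{x}) - \xi(D_V) \in \mathbb{R}/\mathbb{Z}$ is independent of the compatibly unitary connection $\nabla^V$. Given a smooth path $\{\nabla_t\}_{t \in I}$, Proposition~\ref{prop:stokes} applied to the trivial cylinder $p : X \times I \to I$, together with the differential $K$-theory class on $X \times I$ determined by the path of connections, gives
\[
\wp(\check{x}_1) - \wp(\check{x}_0) \equiv \int_X \hat{\alpha}\left(TX; \nabla^\textrm{lc}\right) \wedge \textrm{CS}(V;\nabla_0,\nabla_1) \pmod{\mathbb{Z}}.
\]
The Atiyah--Patodi--Singer variation formula for the reduced eta invariant yields the same integral modulo $\mathbb{Z}$ for $\xi(D_{V,\nabla_1}) - \xi(D_{V,\nabla_0})$, so subtracting shows $\wp(\check{x}) - \xi(D_V)$ is locally constant in $t$, hence depends only on the isomorphism class of the topological bundle $V$.

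Second, I would use a spin bordism argument to realize $(X, V)$ as part of a boundary. Classify $V$ by a smooth map $f : X \to BU(k)$ landing in a finite-dimensional Grassmannian $G$; the pair $(X, f)$ defines a class in $\Omega^{\textrm{spin}}_{\textrm{odd}}(G)$. Since $G$ has cohomology concentrated in even degrees and $\Omega^{\textrm{spin}}_* \otimes \mathbb{Q}$ is concentrated in degrees divisible by four, this bordism group is finite, so for some $N \geq 1$ the class $N \cdot [(X, f)]$ vanishes. This yields a compact spin manifold $C$ with a vector bundle and connection $(\tilde{V}, \nabla^{\tilde{V}})$ having a product structure near $\partial C$, whose boundary data consist of $N$ disjoint copies of $(X, V, \nabla^V)$. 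Applying Proposition~\ref{prop:boundary} then gives
\[
N \bigl( \wp(\check{x}) - \xi(D_V) \bigr) = 0 \quad \text{in } \mathbb{R}/\mathbb{Z}.
\]

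The main obstacle is the residual $\tfrac{1}{N}\mathbb{Z}/\mathbb{Z}$-ambiguity. To eliminate it, I would combine the topological invariance from the first stage with the Fubini theorem (Proposition~\ref{prop:fubini}) and the analogous multiplicativity of the reduced eta invariant under tensoring with a bundle of nonzero Dirac index: pairing $\check{x}$ against a family of even-dimensional spin auxiliary data $(Y, W, \nabla^W)$ whose indices $\textrm{ind}(D_W)$ range over a set of integers with greatest common divisor $1$, one finds that the invariant $\wp(\check{x}) - \xi(D_V)$ is annihilated by a collection of mutually coprime integers and must therefore vanish. Executing this final reduction carefully is the heart of Klonoff's argument and the most delicate step in the proof.
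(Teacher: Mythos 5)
Your stages 1--2 are fine in outline (the connection-independence and the finiteness of $\Omega^{\textrm{spin}}_{\textrm{odd}}$ of a Grassmannian are standard), but they only deliver $N\bigl(\wp(\check{x})-\xi(D_V)\bigr)=0$ for some uncontrolled $N$, i.e.\ that the defect $\delta=\wp(\check{x})-\xi(D_V)$ lies in $\mathbb{Q}/\mathbb{Z}$; and this is exactly where the proof stalls, because a homomorphism from a finite bordism group to $\mathbb{R}/\mathbb{Z}$ can perfectly well be nonzero. Your stage 3, as stated, does not close this gap: by the Fubini property and the multiplicativity of $\eta$ from Atiyah--Patodi--Singer III, pairing $(X,V)$ with even-dimensional data $(Y,W)$ of index $k$ \emph{multiplies} the defect, $\delta(X\times Y, V\boxtimes W)=k\,\delta(X,V)$ in $\mathbb{R}/\mathbb{Z}$; it does not annihilate it by $k$. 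To conclude $k\delta=0$ you would need to know that the product $(X\times Y, V\boxtimes W)$ is an honest boundary, and nothing in your argument produces such bounding products; rerunning stage 2 on the product only gives $N'k\delta=0$ for another uncontrolled $N'$, which is weaker, not stronger. So the claim that $\delta$ is ``annihilated by a collection of mutually coprime integers'' is unsubstantiated, and it is precisely the missing idea.

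The paper (following Klonoff) supplies exactly this ingredient as a separate boundary theorem of Hopkins (Lemma~\ref{lem:ks}): for some $s$, the bundle $V\boxtimes H^s$ over $X\times(\mathbb{CP}^1)^s$, with $H$ the Hopf hyperplane bundle, literally bounds a bundle over a spin manifold. Because $\textrm{ind}(D_H)=1$, twisting by $H^s$ changes neither invariant: Proposition~\ref{prop:fubini} gives $\wp(\check{x}\boxtimes\check{\tau}^s)=\textrm{ind}(D_H)^s\,\wp(\check{x})=\wp(\check{x})$, and the kernel decomposition together with $\eta(D_{V\boxtimes H^s})=\textrm{ind}(D_H)\,\eta(D_{V\boxtimes H^{s-1}})$ gives $\xi(D_{V\boxtimes H^s})=\xi(D_V)$ in $\mathbb{R}/\mathbb{Z}$. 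Applying the boundary case (Proposition~\ref{prop:boundary}) to the bounding pair then yields the exact equality $\wp(\check{x})=\xi(D_V)$ with no residual torsion ambiguity, bypassing the finite-bordism-order argument altogether. Without Lemma~\ref{lem:ks} (or an equivalent statement producing index-one auxiliary data whose product with $(X,V)$ bounds), your proposal does not prove the proposition.
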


\noindent The line of argumentation we follow in the proof of this is due to K. Klonoff; it employs as a lemma the following boundary theorem of M. Hopkins:

\begin{lem} \label{lem:ks} Suppose $V$ is a vector bundle over an odd-dimensional spin manifold $X$.  Let $H \rightarrow \mathbb{CP}^1$ denote the Hopf hyperplane bundle.  Then there is a natural number $s$ so that the vector bundle
\begin{displaymath}
V \boxtimes H^s \longrightarrow X \times (\mathbb{CP}^1)^s,
\end{displaymath}
where $H^s$ is the exterior tensor product of $s$ copies of $H \rightarrow \mathbb{CP}^1$, is a boundary, in the sense that there exists a spin manifold $Y$ with $\partial Y = X \times (\mathbb{CP}^1)^s$ and a vector bundle $W \rightarrow Y$ such that
\begin{displaymath}
W|_{\partial Y} \simeq V \boxtimes H^s
\end{displaymath}
\end{lem}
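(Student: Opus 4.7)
The pair $(X,V)$ represents a class $\alpha = [X,V]$ in the spin bordism group $\Omega^{\mathrm{spin}}_{\dim X}(BU)$, where the classifying map of $V$ sends $X$ to $BU$. The conclusion of the lemma asserts that the product
\[
\alpha \cdot [\mathbb{CP}^1,H]^s = \bigl[X \times (\mathbb{CP}^1)^s,\; V \boxtimes H^s\bigr] \in \Omega^{\mathrm{spin}}_{\dim X + 2s}(BU)
\]
vanishes for some $s \in \mathbb{N}$, since a nullbordism of this class is precisely a spin manifold $Y$ with $\partial Y = X \times (\mathbb{CP}^1)^s$ together with a complex bundle $W \to Y$ restricting to $V \boxtimes H^s$ on the boundary. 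Here the product is formed via the tensor-product $H$-space structure on $BU$, which realizes external tensor product of bundles.

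Two observations reduce this to a stable-homotopical fact. First, the Atiyah--Bott--Shapiro $K$-theoretic spin orientation $MSpin \to KU$ induces a natural transformation $\Omega^{\mathrm{spin}}_*(-) \to KU_*(-)$. Since $BU$ has cells only in even dimensions, the Atiyah--Hirzebruch spectral sequence for $KU_*(BU)$ collapses in even degrees, so $KU_{\mathrm{odd}}(BU) = 0$. As $\dim X$ is odd, the image of $\alpha$ in $KU_*(BU)$ vanishes; equivalently, this is the triviality of the twisted Dirac index on an odd-dimensional spin manifold. Second, $[\mathbb{CP}^1,H]$ maps under ABS to (a class whose leading term is) the Bott element $\beta \in KU_2$, mirroring the classical identification $[H]-1 = \beta$ in $\widetilde{KU}(S^2)$; multiplication by $\beta$ is invertible in $KU$-theory.

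To conclude, I would invoke a Conner--Floyd-type theorem for spin bordism: the colimit
\[
\mathrm{colim}_s\, \Omega^{\mathrm{spin}}_{*+2s}\bigl(Z \times (\mathbb{CP}^1)^s\bigr),
\]
taken along multiplication by $[\mathbb{CP}^1,H]$, computes $KU_*(Z)$ for every space $Z$, naturally in $Z$. Applied to $Z = BU$, the vanishing of $\alpha$ in $KU_*(BU)$ forces $\alpha \cdot [\mathbb{CP}^1,H]^s = 0$ in $\Omega^{\mathrm{spin}}_*(BU)$ for some $s$, completing the proof.

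\textbf{Main obstacle.} The hardest input is the Conner--Floyd isomorphism for spin bordism; the complex-cobordism analog $MU_*[\beta^{-1}] \cong KU_*$ is classical, but the spin version requires the Anderson--Brown--Peterson splitting of $MSpin$ at the prime $2$ into summands of $KO$-module and Eilenberg--MacLane type, so that inverting $[\mathbb{CP}^1,H]$ on spin bordism indeed recovers $K$-homology. A more geometric alternative would construct the nullbordism directly by surgery on $X \times (\mathbb{CP}^1)^s$, using the vanishing of the twisted Dirac index to guide surgeries killing $V \boxtimes H^s$; but such a construction is delicate, needs careful spin-structure bookkeeping, and implicitly rests on the same stable phenomenon.
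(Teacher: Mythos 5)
There is a genuine gap here, and it sits exactly where the content of the lemma lies. Note first that the paper itself offers no argument for this statement: its ``proof'' is a citation of Klonoff's thesis \cite{K}, where the result is attributed to M.~Hopkins, so your sketch has to stand on its own. Your bordism-theoretic reformulation is the right framework, and the two soft ingredients are fine: $KU_{\mathrm{odd}}(BU)=0$ by the even-cell collapse of the Atiyah--Hirzebruch spectral sequence, and $\mathrm{ind}(D_H)=1$ on $\mathbb{CP}^1$, so $[\mathbb{CP}^1,H]$ does hit (a unit times) the Bott class under the $K$-theoretic pushforward. But the pivotal third step --- the ``Conner--Floyd-type theorem'' that $\mathrm{colim}_s\,\Omega^{\mathrm{spin}}_{*+2s}(Z\times(\mathbb{CP}^1)^s)$, taken along multiplication by $[\mathbb{CP}^1,H]$, computes $KU_*(Z)$ --- is not a citable theorem, and it does not follow from the results you point to. The Conner--Floyd and Hopkins--Hovey statements are of tensor-product form, $\Omega_*(Z)\otimes_{\Omega_*}KO_*$ (resp.\ $KU_*$ for $\mathrm{Spin}^c$): they quotient by the entire ideal generated by the kernel of the genus, which is strictly stronger than inverting a single element. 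Naive one-element localization can fail badly --- inverting $[\mathbb{CP}^1]$ in $MU_*$ certainly does not yield $KU_*$, since $MU_*$ is a polynomial ring and localization kills nothing. What saves your statement, if anything, is that the class you invert carries the $H$-twist, so the relevant input is morally a Snaith-type localization ($\Sigma^\infty BU(1)_+[\beta^{-1}]\simeq KU$) smashed with $M\mathrm{Spin}$, together with the evenness and torsion-freeness of $KU_*(M\mathrm{Spin}\wedge BU_+)$; none of this is formulated or proved in your sketch, and making it precise (including identifying $[\mathbb{CP}^1,H]$ with the Snaith Bott class up to correction terms) is essentially the whole difficulty of Hopkins' lemma. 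As written, you have reduced the lemma to an unproved statement of at least comparable depth.

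A secondary but real issue: a nullbordism of the class in $\Omega^{\mathrm{spin}}_*(BU)$ only produces a bundle $W\rightarrow Y$ whose restriction to $\partial Y$ is \emph{stably} isomorphic to $V\boxtimes H^s$ --- an extra trivial summand can appear, and you cannot subtract it off by a separate nullbordism because $X\times(\mathbb{CP}^1)^s$ need not bound as a spin manifold. The lemma, and its use downstream (the eta invariant of a trivial summand is not controlled), requires an honest isomorphism $W|_{\partial Y}\simeq V\boxtimes H^s$. You should therefore run the argument over the rank-fixed classifying space $BU(n)$ (for which the same evenness argument gives $KU_{\mathrm{odd}}(BU(n))=0$, and for which your localization statement would need to hold naturally in the space), or otherwise explain how to eliminate the stabilization.
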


\begin{proof} See \cite{K}.
\end{proof}

\begin{proof}[Proof of Proposition \ref{prop:ordinary}.] Suppose that $X$ is a compact odd dimensional spin manifold, and $V$ a vector bundle over $X$ with unitary connection $\nabla^V$.  These data determine a class $\check{x} \in \check{K}^0(X)$.  Let $H \rightarrow \mathbb{CP}^1$ be the Hopf hyperplane bundle, and let $s \in \mathbb{N}$ so that $V \boxtimes H^s$ is a boundary in the sense of Lemma~\ref{lem:ks}.  We claim that
\begin{equation}\label{eq:equiv}
\xi(D_{V \boxtimes H^s}) = \xi(D_V)
\end{equation}
in $\mathbb{R}/\mathbb{Z}$. To see this, notice that
\begin{equation}
\textrm{ker}(D_{V \boxtimes H^s}) = \textrm{ker}(D_{V \boxtimes H^{s-1}}) \otimes \left(\textrm{ker}(D_H^+) \oplus \textrm{ker}(D_H^-)\right)
\end{equation}
Also,
\begin{equation}
\textrm{dim}\,\textrm{ker}(D_H) = 1 + 2\,\textrm{dim}\,\textrm{ker}(D_H^-)
\end{equation}
since $\textrm{ind}(D_H) = 1$, so
\begin{equation} \begin{array}{c}
\frac{1}{2}(\textrm{dim}\,\textrm{ker}(D_{V \boxtimes H^s}) - \textrm{dim}\,\textrm{ker}((D_{V \boxtimes H^{s-1}})) = \textrm{dim}\,\textrm{ker}(D_H^-) \in \mathbb{Z} \end{array}
\end{equation}
Furthermore, the eta invariants satisfy
\begin{equation}
\eta(D_{V \boxtimes H^s}) = \textrm{ind}(D_H)\,\eta(D_{V \boxtimes H^{s-1}}) = \eta(D_{V \boxtimes H^{s-1}})
\end{equation}
(cf. \cite{APS3}).  It follows that
\begin{equation}
\xi(D_{V \boxtimes H^s}) - \xi(D_{V \boxtimes H^{s-1}}) \in \mathbb{Z}
\end{equation}
Continuing in this manner, we deduce (\ref{eq:equiv}).

On the other hand, the connection on $V \boxtimes H^s$ induced by the connections on $V$ and $H$ determines a class
\begin{displaymath}
\check{x} \boxtimes \check{\tau}^s \in \check{K}^0(X \times \mathbb{CP}^1)
\end{displaymath}
It should be clear that
\begin{equation}
\check{x} \boxtimes \check{\tau}^s = \pi^*\check{x} \cdot \prod_{i=1}^s \pi_i^*\check{\tau}
\end{equation}
where $\pi$ and $\pi_i$ denote projection onto $X$ and the $i^\textrm{th}$ factor of $(\mathbb{CP}^1)^s$, respectively.  It follows from Proposition~\ref{prop:fubini} that
\begin{equation}
\wp(\check{x} \boxtimes \check{\tau}^s) = \textrm{ind}(D_H)^s\, \wp(\check{x}) = \wp(\check{x})
\end{equation}
Thus, by Proposition~\ref{prop:boundary},
\begin{equation}
\wp(\check{x}) = \wp\left(\check{x} \boxtimes \check{\tau}^s\right) = \xi(D_{V \boxtimes H^s}) = \xi(D_V)
\end{equation}
We conclude that Conjecture~\ref{conj:conjecture} holds in ordinary differential $K$-theory. \end{proof}

\subsection{Proof for the free case}

It is now relatively straightforward to prove Conjecture~\ref{conj:conjecture} for the case of a free action by reducing to ordinary differential $K$-theory.

\begin{prop} \label{prop:free2} Conjecture~\ref{conj:conjecture} holds if $\Gamma$ acts freely. \end{prop}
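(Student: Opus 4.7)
The plan is to adapt Klonoff's argument for Proposition \ref{prop:ordinary} to the equivariant setting, using the free action hypothesis to lift bordism-theoretic data from the quotient $\bar{X} = X/\Gamma$ back up to $X$. Since $\Gamma$ preserves the spin structure and acts freely, $\bar{X}$ is a smooth odd-dimensional spin manifold, $\bar{V} = V/\Gamma$ is a smooth complex vector bundle with compatibly unitary connection, and $D_{\bar{V}}$ is defined on $\bar{X}$. I would run Klonoff's three-step argument equivariantly: construct an equivariant bounding manifold after multiplication by Hopf bundles, apply the equivariant boundary case (Proposition \ref{prop:boundary}), and then reduce from $V \boxtimes H^s$ back to $V$ via the Fubini theorem and a kernel/eta-function computation.

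Concretely, I would first apply Lemma \ref{lem:ks} to $\bar{V} \to \bar{X}$ to obtain $s \in \mathbb{N}$ and a compact spin manifold $\bar{Y}$ with $\partial \bar{Y} = \bar{X} \times (\mathbb{CP}^1)^s$, together with a bundle $\bar{W}$ on $\bar{Y}$ extending $\bar{V} \boxtimes H^s$. I would then produce a free $\Gamma$-cover $Y \to \bar{Y}$ extending the given cover $X \times (\mathbb{CP}^1)^s \to \bar{X} \times (\mathbb{CP}^1)^s$ on the boundary, and set $W = \pi^*\bar{W}$ with the pulled-back connection. With a product metric chosen near the boundary, Proposition \ref{prop:boundary} applied to $(Y,W)$ yields
\begin{equation}
\wp_\Gamma(\check{x} \boxtimes \check{\tau}^s) = \xi_\Gamma(D_{V \boxtimes H^s}),
\end{equation}
where $\check{\tau} \in \check{K}^0(\mathbb{CP}^1) \hookrightarrow \check{K}^0_\Gamma(\mathbb{CP}^1)$ is the class of the Hopf bundle carrying its canonical connection and the trivial $\Gamma$-action.

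Next I would apply Proposition \ref{prop:fubini} iteratively on the left-hand side. Because $\Gamma$ acts trivially on $\mathbb{CP}^1$, the equivariant pushforward $\wp_\Gamma(\check{\tau})$ equals the equivariant index $\mathrm{ind}_\Gamma(D_H) = 1_\Gamma \in R(\Gamma)$, so $\wp_\Gamma(\check{x} \boxtimes \check{\tau}^s) = \wp_\Gamma(\check{x}) \cdot (1_\Gamma)^s = \wp_\Gamma(\check{x})$. On the analytic side, for every $g \in \Gamma$ the product formula for eta invariants gives $\eta_g(D_{V \boxtimes H^s}) = \mathrm{ind}(D_H)^s\, \eta_g(D_V) = \eta_g(D_V)$. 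Triviality of the $\Gamma$-action on $\ker D_H$ yields $\ker D_{V \boxtimes H^s} \cong \ker D_V \otimes (\ker D_H)^{\otimes s}$ as $\Gamma$-modules, so the two kernel characters differ by the odd integer factor $(1 + 2\dim \ker D_H^-)^s$, and the half-difference of kernel characters lies in $R(\Gamma)$. Combining these shows $\xi_\Gamma(D_{V \boxtimes H^s}) \equiv \xi_\Gamma(D_V) \pmod{R(\Gamma)}$, from which the conjecture follows.

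The main obstacle is producing the $\Gamma$-cover $Y \to \bar{Y}$. The classifying map $\bar{X} \to B\Gamma$ for $X \to \bar{X}$ extends automatically to $\bar{X} \times (\mathbb{CP}^1)^s$, but extending it further over $\bar{Y}$ faces a cohomological obstruction that need not vanish for an arbitrary bounding manifold produced by Lemma \ref{lem:ks}. The natural remedy is to upgrade Lemma \ref{lem:ks} to a statement in spin bordism over $B\Gamma$: Klonoff's $\mathbb{CP}^1$-bundle nullification argument is sufficiently natural to adapt to the relative setting $\Omega^{\mathrm{spin}}_\bullet(B\Gamma \times BU)$, producing $\bar{Y}$ equipped with a compatible extension to $B\Gamma$. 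With this refinement in hand, the free $\Gamma$-cover $Y \to \bar{Y}$ exists, and the remainder of the argument proceeds as above.
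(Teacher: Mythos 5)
The reductions you perform on either side of the equation are fine (stripping off the $\check{\tau}$ factors with Proposition~\ref{prop:fubini}, the $g$-equivariant product formula for $\eta$, and the kernel bookkeeping), but the whole argument rests on the step you flag at the end: the existence of a spin nullbordism $\bar{Y}$ of $\bar{X} \times (\mathbb{CP}^1)^s$ to which both the bundle $\bar{V} \boxtimes H^s$ \emph{and} the classifying map of the cover $\bar{X} \to B\Gamma$ extend, i.e.\ a free $\Gamma$-manifold $Y$ with $\partial Y = X \times (\mathbb{CP}^1)^s$ equivariantly and an extension of $V \boxtimes H^s$ with connection. Lemma~\ref{lem:ks} gives no control over the map to $B\Gamma$, and the proposed upgrade to bordism over $B\Gamma$ is not merely unproven --- it cannot hold in general, because your own argument would then prove too much. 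On a free $\Gamma$-manifold $Y$ every fixed-point set $Y^g$ with $g \neq e$ is empty, so the characteristic integrals in Proposition~\ref{prop:stokes}/Lemma~\ref{lem:integrand} vanish for $g \neq e$ and the boundary case would give $\xi_g(D_{V \boxtimes H^s}) = -\textrm{ind}_g(D_W^+)$, the value of a virtual character; combined with your comparison of $\xi_\Gamma(D_{V\boxtimes H^s})$ with $\xi_\Gamma(D_V)$, this would force $\xi_g(D_V)$ to be an algebraic integer (a virtual-character value) for every $g \neq e$ and every free action. That is already false for $X = S^3$ with a free $\mathbb{Z}/k$-action and $V$ trivial: computing $\xi_g$ by Donnelly's fixed-point formula on the bounding disc (where the action is \emph{not} free) produces cosecant-type expressions that are not algebraic integers --- this is the classical non-triviality of the eta/rho invariants of lens spaces, and it is exactly the obstruction showing that multiplying by $(\mathbb{CP}^1, H)$ kills the class in $\Omega^{\mathrm{spin}}_\bullet(BU)$ but not the torsion coming from $B\Gamma$. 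So the ``natural remedy'' is not available, and without it the chain of equalities never gets started.

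The paper takes a different route precisely to avoid any equivariant bordism input. Working on $Y = X/\Gamma$, it uses the flat bundles $V_\chi$ attached to the irreducible characters of $\Gamma \cong \pi_1(Y)/\pi_1(X)$ and the eigenspace decomposition $H\lbrack\lambda\rbrack \cong \bigoplus_\chi (H\lbrack\lambda\rbrack \otimes W_\chi)^\Gamma \otimes W_{\bar{\chi}}$ to obtain $\xi_\Gamma(D_V) = \sum_\chi \xi(D_{V/\Gamma \otimes V_\chi})\,\bar{\chi}$, and then proves the matching decomposition $\wp_\Gamma(\check{x}) = \sum_\chi \wp(\check{v}_\chi \cdot \check{x})\,\bar{\chi}$ on the geometric side, using $p_!\,p^*\check{y} = \chi_\Gamma \cdot \check{y}$ (Lemma~\ref{lem:freelemma}), Proposition~\ref{prop:composition}, the isomorphisms of Propositions~\ref{prop:free} and~\ref{prop:trivial}, and Proposition~\ref{prop:fubini}; each summand is then handled by the already-established non-equivariant case, Proposition~\ref{prop:ordinary}. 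If you wanted to push your approach through, you would in effect have to establish the equivariant reduction to the boundary case, which the paper explicitly leaves open; the flat-bundle decomposition is the device that sidesteps it in the free case.
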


\noindent We begin with a lemma.

\begin{lem} \label{lem:freelemma} Suppose $X$ is a compact Riemannian spin $\Gamma$-manifold with free $\Gamma$-action. Let $Y = X/\Gamma$, where $\Gamma$ acts trivially on $Y$, and let $p: X \rightarrow Y$ be the quotient map. If $\check{y} \in \check{K}_\Gamma^0(Y)$, then
\begin{equation}
p_!(p^*\check{y}) = \chi_\Gamma \cdot \check{y}
\end{equation}
where $\chi_\Gamma$ is the character of the regular representation. \end{lem}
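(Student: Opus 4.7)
Plan: The plan is to leverage the two natural isomorphisms of Propositions~\ref{prop:free} and~\ref{prop:trivial} to reduce the identity to an algebraic statement in $R(\Gamma)$. Since $\Gamma$ acts freely on $X$, Proposition~\ref{prop:free} gives an isomorphism $p^*: \check{K}^\bullet(Y) \xrightarrow{\sim} \check{K}_\Gamma^\bullet(X)$; since $\Gamma$ acts trivially on $Y$, Proposition~\ref{prop:trivial} gives $\check{K}_\Gamma^\bullet(Y) \cong \check{K}^\bullet(Y) \otimes R(\Gamma)$. Under these identifications, I will show that $p^*: \check{K}_\Gamma^\bullet(Y) \to \check{K}_\Gamma^\bullet(X)$ corresponds to $\check{z} \otimes \rho \mapsto \dim(\rho)\check{z}$ and that $p_!: \check{K}_\Gamma^\bullet(X) \to \check{K}_\Gamma^\bullet(Y)$ corresponds to $\check{w} \mapsto \check{w} \otimes \chi_\Gamma$. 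The composition then sends $\check{z} \otimes \rho \mapsto \dim(\rho)\check{z} \otimes \chi_\Gamma = \check{z} \otimes \chi_\Gamma\rho$, using the identity $\chi_\Gamma \cdot \rho = \dim(\rho)\chi_\Gamma$ in $R(\Gamma)$, which is exactly $\chi_\Gamma \cdot \check{y}$.

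The first identification follows by unpacking Proposition~\ref{prop:trivial}: a representative $(f, \eta, \omega)$ for $\check{y} = \sum_\chi \check{z}_\chi \otimes \chi$ decomposes with $\eta_g = \sum_\chi \eta_\chi \chi(g)$ and $\omega_g = \sum_\chi \omega_\chi \chi(g)$. Pulling back via $p$ gives a representative on $X$ whose $g$-components vanish for $g \neq e$, since $X^g = \varnothing$ by freeness of the action, and whose $e$-component is $\sum_\chi \dim(\chi) p^*\omega_\chi$ (resp.\ $\sum_\chi \dim(\chi) p^*\eta_\chi$). Inverting the isomorphism of Proposition~\ref{prop:free}, $p^*\check{y}$ is precisely the class in $\check{K}^0(Y)$ with curvature $\sum_\chi \dim(\chi)\omega_\chi$, namely $\sum_\chi \dim(\chi)\check{z}_\chi$.

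The second identification uses that $p$ is a $0$-dimensional equivariant covering with trivially spin relative tangent bundle, so the pushforward reduces to fiberwise summation. At the level of characteristic classes, the direct image $p_!(p^*V)$ of a pulled-back bundle $V \to Y$ is $V \otimes \mathbb{C}[\Gamma]$, by the projection formula for a principal $\Gamma$-bundle, which is $[V] \otimes \chi_\Gamma$ under Proposition~\ref{prop:trivial}'s iso. At the level of curvatures, the $g$-component of $\omega(p_!p^*\check{w})$ equals a sum over the $g$-fixed points in each fiber of $p$: these number $|\Gamma| = \chi_\Gamma(e)$ when $g = e$ and $0 = \chi_\Gamma(g)$ when $g \neq e$, matching the $g$-component of $\chi_\Gamma \cdot \omega(\check{w})$. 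Matching both pieces in the short exact sequence (\ref{eq:shortexact}) and applying the Five Lemma (as in the proof of Proposition~\ref{prop:free}) pins down $p_!\check{w} = \check{w} \otimes \chi_\Gamma$.

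The main obstacle is the second step: one must verify rigorously that the pushforward from Section~5, defined via an equivariant embedding $X \hookrightarrow Y \times W$ and a differential Thom class, really does reduce to fiberwise summation when the fiber is $0$-dimensional, and that the form-level computation of $\omega(p_!p^*\check{w})$ agrees with $\chi_\Gamma \cdot \omega(\check{w})$ after passage through the intermediate Thom-class and integration-over-a-representation constructions. This is a calculation that one can check by tracking through the definitions, but it is more intricate than the purely algebraic matching in the rest of the argument.
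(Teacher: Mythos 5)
Your concluding algebra ($\chi_\Gamma\cdot\rho=\dim(\rho)\,\chi_\Gamma$) is fine, but both of the ``identifications'' on which it rests are obtained by matching curvatures and characteristic classes only, and that cannot determine a class in differential equivariant $K$-theory: by the exact sequences (\ref{eq:shortexact}) and (\ref{eq:shortexact3}), two classes with the same image in $A^0_\Gamma(Y)$ may still differ by a flat class in $K^{-1}_\Gamma(Y;\mathbb{R}/\mathbb{Z})$, and in the only situation where the lemma is actually needed (Proposition~\ref{prop:free2}, with $Y$ odd-dimensional) this flat part is large and carries precisely the eta-invariant information at stake. The Five Lemma cannot play the role you assign to it: in Proposition~\ref{prop:free} it is applied to an already-constructed homomorphism of short exact sequences to conclude that it is an isomorphism; it does not allow you to conclude that two particular elements agreeing in the outer terms coincide in the middle. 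Moreover, your first identification is wrong as stated: under the isomorphisms of Propositions~\ref{prop:free} and~\ref{prop:trivial}, $p^*(\check{z}\otimes\chi)$ corresponds not to $\dim(\chi)\,\check{z}$ but to $\check{z}\cdot\check{v}_\chi$, where $\check{v}_\chi$ is the class of the flat bundle $X\times_\Gamma W_\chi\rightarrow Y$ with its flat connection (the $\Gamma$-quotient of the pulled-back trivial bundle $X\times W_\chi$). This has the same curvature as $\dim(\chi)\,\check{z}$ but is in general a different differential class --- these are exactly the classes $\check{v}_\chi$ that appear in the paper's proof of Proposition~\ref{prop:free2} and pair nontrivially under $\wp$. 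The same globalization issue undercuts the ``fiberwise summation'' step: the identification of a fiber $p^{-1}(y)$ with $\Gamma$ is not global over $Y$, so a pointwise count of $g$-fixed points in the fibers controls the $g$-components of forms but not the underlying class.

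The paper avoids all of this by working with full representatives rather than with their images in $A^0_\Gamma$. It chooses the tubular neighborhood $\nu$ of $X\subset Y\times W$ to be, over each $y$, a disjoint union of $|\Gamma|$ balls mapping by an equivariant $|\Gamma|$-to-one quotient onto a tubular neighborhood $\nu_Y=Y\times B_0(r)$ of $Y$, and takes the differential Thom class $\check{U}$ to be the pullback of a Thom class $\check{U}_Y$ under this map; then an entire representative triple (classifying map, $\eta$-form, curvature) for $\check{U}\cdot\pi^*p^*\check{y}$ is pulled back from one for $\check{U}_Y\cdot\pi_Y^*\check{y}$. Integration over $W$ then yields the factor $\chi_\Gamma$ directly: for $g\neq e$ the fixed set $Y\times W^g$ misses $\nu$ altogether (a $g$-fixed vector within distance $r$ of $\varphi(x)$ would lie in two of the disjoint balls), so every $g$-component vanishes, matching $\chi_\Gamma(g)=0$, while the $e$-component picks up the factor $|\Gamma|$ from the $|\Gamma|$ balls in each fiber --- and this is verified for $\eta$ and for the map $f$, not only for $\omega$. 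To repair your outline you would have to replace the curvature-matching/Five-Lemma step with an argument of this kind that controls the secondary data, at which point you are essentially reproducing the paper's proof.
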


\begin{proof} Let $\varphi: X \hookrightarrow W$ be an embedding of $X$ in a real spin $\Gamma$-representation of dimension $2N$. Let $B_w(r) \subset W$ be the ball of radius $r$ centered at $w$. Fix $r$ small enough that
\begin{displaymath}
B_{\varphi(x)}(r) \cap B_{\varphi(x')}(r) = \varnothing
\end{displaymath}
for all $x,x' \in X$ such that $p(x) = p(x')$. Then
\begin{equation}
\nu = \lbrace (p(x),w) \in Y \times W: x \in X, w \in B_{\varphi(x)}(r) \rbrace
\end{equation}
is a tubular neighborhood for $X$ in $Y \times W$, and $\nu_Y = Y \times B_0(r)$ is a tubular neighborhood for $Y$ in $Y \times W$. Let $\check{U}_Y \in \check{K}_\Gamma^{2N}(\nu_Y)$ be a differential equivariant $K$-theoretic Thom class for $\pi_Y: \nu_Y \rightarrow Y$; then the pullback $\check{U}$ of $\check{U}_Y$ to $\nu$ under the obvious quotient map is a Thom class for $\pi: \nu \rightarrow X$.

Let $(f_Y,\eta_Y,\omega_Y)$ be a representative for $\check{U}_Y \cdot \pi_Y^*\check{y}$. Then the pullback $(f,\eta,\omega)$ of this triple to $\nu$ is a representative for $\check{U} \cdot \pi^*p^*\check{y}$. Clearly,
\begin{equation}
\psi_!\omega = \psi_!\omega_Y \cdot \chi_\Gamma \qquad \psi_!\eta = \psi_!\eta_Y \cdot \chi_\Gamma
\end{equation}
where $\psi$ is the projection map $Y \times W \rightarrow W$. Furthermore, notice that, for each $y \in Y$, $f$ restricts to an equivariant map from $D^{2N} \times \Gamma$ into $\boldsymbol{\mathfrak{F}}_\Gamma^0$ which takes each copy of $\partial D^{2N}$ into the space of invertible operators and agrees with $f_Y$ on each copy of $D^{2N}$. Thus,
\begin{equation}
\psi_!(\check{U} \cdot \pi^*p^*\check{y}) = \psi_!(\check{U}_Y \cdot \pi_Y^*\check{y}) \cdot \chi_\Gamma
\end{equation}
Furthermore,
\begin{equation}
\psi_!(\check{U}_Y \cdot \pi_Y^*\check{y}) = \check{y}
\end{equation}
The proof follows. \end{proof}

\begin{proof}[Proof of Proposition~\ref{prop:free2}] Suppose that $X$ is a compact connected odd dimensional Riemannian spin $\Gamma$-manifold with free $\Gamma$-action, and let $V$, $\nabla$, $\check{x}$ be as in the statement of the conjecture.  Let $Y = X/\Gamma$.  Recall that each unitary representation of
\begin{displaymath}
\pi_1(Y)/\pi_1(X) \cong \Gamma
\end{displaymath}
determines a flat vector bundle over the quotient space $Y$.  Given a character $\chi$, let $V_\chi \rightarrow Y$ be the flat bundle determined by the $\Gamma$-representation $W_\chi$ corresponding to $\chi$.  It satisfies
\begin{displaymath}
p^*V_\chi \simeq X \times W_\chi
\end{displaymath}
where $p$ is the quotient map.  Give $V_\chi$ the flat unitary connection $\nabla^\chi$, and let $D_{V/\Gamma \otimes V_\chi}$ be the Dirac operator $D_{V/\Gamma}$ twisted by $(V_\chi,\nabla^\chi)$.  Summing over the characters of the irreducible representations, we decompose each $\lambda$-eigenspace $H\lbrack \lambda \rbrack$ for $D_V$ as
\begin{eqnarray}
H\lbrack \lambda \rbrack & \cong & \bigoplus_\textrm{$\chi$ irred.} \textrm{Hom}_\Gamma\left( H \lbrack \lambda \rbrack, W_\chi \right) \otimes W_\chi {}\nonumber\\
& \cong & \bigoplus_\textrm{$\chi$ irred.} \left( H \lbrack \lambda \rbrack \otimes W_\chi^* \right)^\Gamma \otimes W_\chi {}\nonumber\\
& \cong & \bigoplus_\textrm{$\chi$ irred.} \left( H \lbrack \lambda \rbrack \otimes W_\chi \right)^\Gamma \otimes W_{\bar{\chi}} {}
\end{eqnarray}
where we have used the fact that $W_\chi^* \cong W_{\bar{\chi}}$.  Notice that
\begin{displaymath}
\left( H \lbrack \lambda \rbrack \otimes W_\chi \right)^\Gamma
\end{displaymath}
is precisely the $\lambda$-eigenspace of $D_{V/\Gamma \otimes V_\chi}$.  It follows that
\begin{equation} \label{eq:freeeta}
\xi_\Gamma \left(D_V\right) = \sum_\textrm{$\chi$ irred.} \xi \left(D_{V/\Gamma \otimes V_\chi} \right) \cdot \bar{\chi}
\end{equation}
in
\begin{displaymath}
(R(\Gamma) \otimes \mathbb{R})/R(\Gamma) \cong R(\Gamma) \otimes \mathbb{R}/\mathbb{Z}
\end{displaymath}
(These arguments are drawn largely from \cite{APS2}.)

Next, let $\langle\,,\rangle$ denote the inner product on the class functions on $\Gamma$ defined by
\begin{displaymath}
\langle\alpha,\beta\rangle = \frac{1}{|\Gamma|} \sum_{g \in \Gamma} \alpha(g)\bar{\beta}(g)
\end{displaymath}
Under this inner product, the characters of the irreducible representations constitute an orthonormal basis for $R(\Gamma) \otimes \mathbb{R}$. We claim that the diagram
\begin{equation}\label{eq:freediag}
\xymatrix{
\check{K}_\Gamma^0(X) \ar[r]^\sim \ar[d]^{\wp_\Gamma} & \check{K}^0(Y) \ar[d]^{\wp}\\
\frac{R(\Gamma) \otimes \mathbb{R}}{R(\Gamma)} \ar[r] & \mathbb{R}/\mathbb{Z} }
\end{equation}
commutes, where the upper map is given by the isomorphism of Proposition~\ref{prop:free}, and the lower map is orthogonal projection
\begin{displaymath}
\rho \mapsto \langle \rho,\chi_1 \rangle
\end{displaymath}
onto the subspace spanned by the one-dimensional trivial representation $\chi_1$. Recall first that
\begin{displaymath}
\check{K}_\Gamma^0(Y) \cong \check{K}^0(Y) \otimes R(\Gamma)
\end{displaymath}
by Proposition~\ref{prop:trivial}; if $\check{y} \cdot \chi \in \check{K}^0(Y) \otimes R(\Gamma)$, then
\begin{equation}
\wp_\Gamma(\check{y} \cdot \chi) = \wp(\check{y}) \cdot \chi
\end{equation}
in $\check{K}_\Gamma^{-\textrm{dim}(X)}(\textrm{pt})$. On the other hand, the isomorphism
\begin{displaymath}
\check{K}^0(Y) \stackrel{\sim}{\longrightarrow} \check{K}_\Gamma^0(X)
\end{displaymath}
is given by
\begin{displaymath}
\check{y} \mapsto p^*(\check{y} \cdot \chi_1)
\end{displaymath}
Thus, given $\check{x} = p^*(\check{y} \cdot \chi_1) \in \check{K}_\Gamma^0(X)$, we have
\begin{eqnarray}
\wp_\Gamma(\check{x}) & = & \wp_\Gamma(p_!\check{x}) {}\nonumber\\
& = & \wp_\Gamma(p_!\,p^*(\check{y} \cdot \chi_1)) {}\nonumber\\
& = & \wp_\Gamma(\check{y} \cdot \chi_\Gamma) {}\nonumber\\
& = & \wp(\check{y}) \cdot \chi_\Gamma {}
\end{eqnarray}
where the first equality follows from Proposition~\ref{prop:composition} and the third from Lemma~\ref{lem:freelemma}. The commutativity of (\ref{eq:freediag}) follows.

The data $(V_\chi,\nabla^\chi)$ determine a differential $K$-theory class
\begin{displaymath}
\check{v}_\chi \in \check{K}^0(Y) \cong \check{K}^0_\Gamma(X),
\end{displaymath}
and it follows from Proposition~\ref{prop:fubini} that
\begin{equation}
\wp_\Gamma(\check{v}_\chi \cdot \check{x}) = \wp_\Gamma(\check{x}) \cdot \chi
\end{equation}
Thus,
\begin{equation}
\langle \wp_\Gamma(\check{x}),\bar{\chi} \rangle = \langle \wp_\Gamma(\check{v}_\chi \cdot \check{x}), \chi_1 \rangle = \wp(\check{v}_\chi \cdot \check{x})
\end{equation}
We deduce that
\begin{equation}\label{eq:freepush}
\wp_\Gamma(\check{x}) = \sum_\textrm{$\chi$ irred.} \wp(\check{v}_\chi \cdot \check{x}) \cdot \bar{\chi}
\end{equation}
By Proposition~\ref{prop:ordinary},
\begin{equation}
\wp(\check{v}_\chi \cdot \check{x}) = \xi(D_{V/\Gamma \otimes V_\chi})
\end{equation}
Combining this with equations (\ref{eq:freeeta}) and (\ref{eq:freepush}), we have
\begin{equation}
\wp_\Gamma(\check{x}) = \xi_\Gamma(D_V)
\end{equation}
So Conjecture~\ref{conj:conjecture} holds.
\end{proof}



\begin{thebibliography}{50}

\bibitem{ALR} A. Adem, J. Leida, Y. Ruan, \emph{Orbifolds and Stringy Topology}, Cambridge University Press, Cambridge, 2007.

\bibitem{AB} M. F. Atiyah, R. Bott, ``A Lefschetz fixed point formula for elliptic complexes. II. Applications,'' \emph{Ann. of Math. (2)} \textbf{88} (1968) 451-491.

\bibitem{ABP} M. F. Atiyah, R. Bott, V. K. Patodi, ``On the heat equation and the index theorem,'' \emph{Invent. Math.} \textbf{19} (1973), 279-330.

\bibitem{ABS} M. F. Atiyah, R. Bott, A. Shapiro, ``Clifford modules,'' \emph{Topology} \textbf{3} suppl. 1 (1963), 3-38.

\bibitem{AH} M. F. Atiyah, F. Hirzebruch, ``Spin-manifolds and group actions,'' \emph{Essays on Topology and Related Topics (M\'emoires d\'edi\'es \`a Georges de Rham)}, 18-28, Springer, New York, 1970.

\bibitem{APS1} M. F. Atiyah, V. K. Patodi, I. M. Singer, ``Spectral asymmetry and Riemannian geometry. I,'' \emph{Math. Proc. Camb. Phil Soc.} \textbf{77} (1975), 43-69.

\bibitem{APS2} M. F. Atiyah, V. K. Patodi, I. M. Singer, ``Spectral asymmetry and Riemannian geometry. II,'' \emph{Math. Proc. Camb. Phil Soc.} \textbf{78} (1975), 405-432.

\bibitem{APS3} M. F. Atiyah, V. K. Patodi, I. M. Singer, ``Spectral asymmetry and Riemannian geometry. III,'' \emph{Math. Proc. Camb. Phil Soc.} \textbf{79} (1976), 71-99.

\bibitem{ASe2} M. F. Atiyah, G. B. Segal, ``The index of elliptic operators: II,'' \emph{Ann. of Math.} \textbf{87} (1968), 531-545.

\bibitem{ASe3} M. F. Atiyah, G. B. Segal, ``Equivariant K-theory and completion,'' \emph{J. Diff. Geom.} \textbf{3} (1969), 1-18.

\bibitem{ASe} M. F. Atiyah, G. B. Segal, ``On equivariant Euler characteristics,'' \emph{J. Geom. Phys.} \textbf{6} (1989), 671-677.

\bibitem{AS1} M. F. Atiyah, I. M. Singer, ``The index of elliptic operators: I,'' \emph{Ann. of Math.} \textbf{87} (1968), 484-530.

\bibitem{AS2} M. F. Atiyah, I. M. Singer, ``The index of elliptic operators: III,'' \emph{Ann. of Math.} \textbf{87} (1968), 546-604.

\bibitem{AS3} M. F. Atiyah, I. M. Singer, ``Index theory for skew-adjoint Fredholm operators,'' \emph{Publ. Math. de l'I.H.\'E.S.} \textbf{37} (1969), 4-26.

\bibitem{BGV} N. Berline, E. Getzler, M. Vergne, \emph{Heat Kernels and Dirac Operators}, Springer-Verlag, Berlin, 2004.

\bibitem{BT} R. Bott, L. W. Tu, ``Equivariant characteristic classes in the Cartan model,'' \emph{Geometry, Analysis and Applications (Varanasi, 2000)}, World Sci. Publ., River Edge, NJ, 2001, pp. 3-20.

\bibitem{BS} U. Bunke, T. Schick, ``Smooth $K$-theory,'' (2007) arXiv: math 0707.0046.

\bibitem{BS2} U. Bunke, T. Schick, ``Uniqueness of smooth extensions of generalized cohomology theories,'' (2009) arXiv: math 0901.4423.

\bibitem{ChS} J. Cheeger, J. Simons, ``Differential characters and geometric invariants,'' \emph{Geometry and Topology (College Park, Md., 1983/84), Lecture Notes in Math.}, vol. 1167, Springer, Berlin, 1985, pp. 50-80.

\bibitem{CS} S-S Chern, J. Simons, ``Characteristic forms and geometric invariants,'' \emph{Ann. of Math. (2)} \textbf{99} (1974), 48-69.

\bibitem{De} P. Deligne, ``Th\'eorie de Hodge. II,'' \emph{Publ. Math. de l'I.H.\'E.S.} \textbf{40} (1971), 5-57.

\bibitem{D1} H. Donnelly, ``Spectrum and the fixed point sets of isometries. I,'' \emph{Math. Annal.} \textbf{224} (1976), 161-170.

\bibitem{D2} H. Donnelly, ``Eta invariants for $G$-spaces,'' \emph{Indiana Univ. Math. J.} \textbf{27} (1978), 889-918.

\bibitem{DP} H. Donnelly, V. K. Patodi, ``Spectrum and the fixed point sets of isometries. II,'' \emph{Topology} \textbf{16} (1977), 1-11.

\bibitem{F1} D. S. Freed, ``An index theorem for families of Fredholm operators param- etrized by a group,'' \emph{Topology} \textbf{27} (1988), 279-300.

\bibitem{F2} D. S. Freed, ``Dirac charge quantization and generalized differential cohomology,'' \emph{Surv. Diff. Geom., VII}, 129-194, Int. Press, Somerville, MA, 2000.

\bibitem{FH} D. S. Freed, M. J. Hopkins, ``On Ramond-Ramond fields and $K$-theory,'' (2000) arXiv: hep-th 0002027v3.

\bibitem{FHT} D. S. Freed, M. J. Hopkins, C. Teleman, ``Loop groups and twisted $K$-theory I,'' (2007) arXiv: 0711.1906.

\bibitem{FHT2} D. S. Freed, M. J. Hopkins, C. Teleman, ``Twisted equivariant $K$-theory with complex coefficients,'' \emph{J. Topol.} \textbf{1} (2008), 16-44.

\bibitem{FM} D. S. Freed, G. Moore, ``Setting the quantum integrand of M-theory,'' \emph{Comm. Math. Phys.} \textbf{263} (2006), 89-132.

\bibitem{G1} K. Gomi, ``Equivariant smooth Deligne cohomology,'' \emph{Osaka J. Math.} \textbf{42} (2005), 309-337.

\bibitem{GS} V. Guillemin, S. Sternberg, \emph{Supersymmetry and Equivariant de Rham Theory}, Springer-Verlag, Berlin, 1999.

\bibitem{Hs} M. W. Hirsch, \emph{Differential Topology}, Springer-Verlag, New York-Heidelberg, 1976.

\bibitem{HS} M. J. Hopkins, I. M. Singer, ``Quadratic functions in geometry, topology, and M-theory,'' \emph{J. Differential Geom.} \textbf{70} (2005), 329-352.

\bibitem{K} K. Klonoff, ``An index theorem in differential K-theory,'' 2008.

\bibitem{Kz} J. L. Koszul, ``Sur certains groupes de transformations de Lie,'' \emph{G\'eom\'etrie diff\'erentielle. Colloques Internationaux du Centre National de la Recherche Scientifique, Strasbourg, 1953}, 137-141, Centre National de la Recherche Scientifique, Paris, 1953.

\bibitem{L} S. Lang, \emph{Differential and Riemannian Manifolds}, Springer-Verlag, New York, 1995.

\bibitem{Lo} J. Lott, ``\textbf{R}/\textbf{Z} index theory,'' \emph{Comm. Anal. Geom 2} \textbf{2} (1994), 279-311.

\bibitem{Lu} W. L\"uck. ``Equivariant cohomological Chern characters,'' \emph{Int. J. Alg. Comp.} \textbf{15} (2006), 1025-1052.

\bibitem{LM} H. B. Lawson, M.-L. Michelsohn, \emph{Spin Geometry}, Princeton University Press, Princeton, 1989.

\bibitem{MQ} V. Mathai, D. Quillen, ``Superconnections, Thom classes, and equivariant differential forms,'' \emph{Topology} \textbf{25} (1986), 85-110.

\bibitem{MW} G. Moore, E. Witten, ``Self-duality, Ramond-Ramond fields, and K-theory,'' (1999) arXiv: hep-th 9912279v1.

\bibitem{P} R. Palais, ``Imbedding of compact, differentiable transformation groups in orthogonal representations,'' \emph{J. Math. Mech.} \textbf{6} (1957), 673-678.

\bibitem{PS} A. Pressley, G. B. Segal, \emph{Loop Groups}, Oxford University Press, Oxford, 1986.

\bibitem{Q} D. Quillen, ``Superconnections and the Chern character,'' \emph{Topology} \textbf{24} (1985), 89-95.

\bibitem{RS} M. Reed, B. Simon, \emph{Methods of Modern Mathematical Physics. I. Functional Analysis}, Second Edition, Academic Press, Inc. [Harcourt Brace Jovanovich, Publishers], New York, 1980.

\bibitem{R} J. Roe, \emph{Elliptic Operators, Topology, and Asymptotic Methods}, Second Edition, CRC Press, London, 2001.

\bibitem{Se2} G. B. Segal, ``Equivariant $K$-theory,'' \emph{Publ. Math. de l'I.H.\'E.S.} \textbf{34} (1968), 129-151.

\bibitem{SS} J. Simons, D. Sullivan, ``Structured vector bundles define differential $K$-theory,'' (2008) arXiv: math 0810.4935.

\bibitem{SV} R. J. Szabo, A. Valentino, ``Ramond-Ramond fields, fractional branes, and orbifold differential $K$-theory,'' (2007) arXiv: hep-th 0710.2773.

\bibitem{W} A. G. Wasserman, ``Equivariant differential topology,'' \emph{Topology} \textbf{8} (1969), 127-150.

\end{thebibliography}
\end{document}